\definecolor{first}{rgb}{0.65,0.65,0.65}
\definecolor{second}{rgb}{0.9,0.9,0.9}
\newcommand{\PP}{\mathbb{P}}
\newcommand{\EE}{\mathbb{E}}
\newcommand{\eps}{\varepsilon}
\newcommand{\Cov}{\mbox{Cov}}
\newcommand{\bx}{\mathbf{X}}
\newcommand{\by}{\mathbf{Y}}
\newcommand{\rank}[1]{\mbox{rank}}
\newcommand{\RR}{\mathbb{R}}
\newcommand{\si}{\mbox{si}}
\newcommand{\ch}{\mbox{ch}}
\newcommand{\pa}{\mbox{pa}}
\newcommand{\tr}{\mbox{tr}}
\newtheorem{theo}{Theorem}
\newtheorem{lemm}{Lemma}
\newtheorem{corr}{Corollary}
\begin{document}
%\doublespacing
\title{Hierarchical Testing in the High-Dimensional Setting with
  Correlated Variables} 

\author{Jacopo Mandozzi and Peter B\"uhlmann\\
Seminar for Statistics, ETH Z\"urich}

\maketitle

\begin{abstract}
 
We propose a method for testing whether hierarchically ordered groups of
potentially correlated variables are significant for explaining a response
in a high-dimensional linear model. In presence of highly
correlated variables, as is very common in high-dimensional data, it seems
indispensable to go beyond an approach of inferring individual regression
coefficients, and we show that detecting smallest groups of variables (MTDs:
minimal true detections) is realistic. 
Thanks to the hierarchy among the groups of variables, powerful
multiple testing adjustment is possible which leads to a data-driven choice
of the resolution level for the groups. Our procedure, based on repeated
sample splitting, is shown to asymptotically control the familywise error
rate and we provide empirical results for simulated and real data which
complement the theoretical analysis. Supplementary materials for this
  article are available after the References.
\end{abstract}

{\bf Keywords and phrases:} Familywise error rate; Hierarchical
clustering; High-dimensional variable selection; Lasso; Linear
model; Minimal true detection; Multiple testing; Sample splitting.

\section{Introduction}

High-dimensional statistical inference where the number $p$ of
(co-)variables might be much larger than the sample size $n$ has become
a key issue in many areas of applications. We focus here on the linear model 
\begin{eqnarray}\label{mod1}
\by = \bx \beta^0 + \eps,\ \eps \sim {\cal N}_n(0,\sigma^2 I)
\end{eqnarray}
with $n \times p$ design matrix $\bx$, $p \times 1$ regression vector
$\beta^0$ and $n \times 1$ response $\by$, allowing for high-dimensionality
with $p \gg n$. Often, the 
active set of variables carrying the relevant information $$S_0 = \{ j;
\beta^0_j \neq 0 \}$$  
is assumed to be a small subset of all variables, i.e., the model is sparse
with many $\beta^0_j$ being equal to zero. Our main goal is testing of
significance of groups of parameters: 
%the null-hypotheses and alternatives we consider are 
%\begin{eqnarray*}
%H_{0,j}:\ \beta^0_j = 0,\ \ \ H_{A,j}:\ \beta^0_j \neq 0,
%\end{eqnarray*}
%or more generally 
for a group or cluster $C \subseteq \{1,\ldots ,p\}$,
\begin{eqnarray*}
H_{0,C}:\ \beta^0_j = 0\ \mbox{for all}\ j \in C,\ \ \ H_{A,C}:\ \beta^0_j
\neq 0\ \mbox{for at least one}\ j \in C. 
\end{eqnarray*}

Significance testing in the high-dimensional framework is essential when
looking beyond point estimation. \citet{WR08} propose an approach based on
single sample splitting, and \citet{memepb09} improve the reliability and
power of the method based on multiple sample splitting. \citet{mititi11} consider a perturbation technique, and (modified)
bootstrap-type schemes are analyzed by \citet{chala13} and \citet{liuyu13}. 
Another line of
methods have been proposed using low-dimensional regularized projections
(e.g. on single variables for individual hypotheses $H_{0,j}$) which have
some optimality properties
\citep{zhazha13,bue12,jamo13,geeretal13,javmo13}. However, in presence of highly correlated variables, all
these methods are likely to fail for testing individual hypotheses $H_{0,j}$.  

An interesting way to address the fundamental limitation of
identifiability in presence of high correlation or near linear dependence
is given by a hierarchical testing scheme proposed by \citet{Meins08}. 
First, the variables are grouped in a hierarchical way, for example by
hierarchical clustering. At the top of the hierarchy, the
global hypothesis $H_{0,\{1,\ldots ,p\}}$ is tested. If it can be rejected, 
a finer partition with clusters $\{C_k\}_k$ is considered, and for the ones
where $H_{0,C_k}$ can be rejected, one proceeds down the hierarchy
to finer partitions. The method has the powerful advantage that it
  \emph{automatically} goes (from top to bottom in the hierarchy) to finer
  resolution with smaller clusters, depending on signal-strength and the
  correlation structure among the variables. At the end, significant
  clusters can be typically found, and if the signal for an individual
  variable is sufficiently strong, even significance of a single variable
  can be detected. 
%The minimal true detections (MTDs) measure the power
%  for finding significant smallest groups of variables, and our method
%  performs well in that respect. 
  \citet{Meins08} has worked out a simple yet powerful way 
  for controlling the familywise error rate when performing multiple tests
  in the hierarchy, assuming that there is a method which leads to valid
  p-values for the various hypotheses tests; for example, when $p < n$ and
  with Gaussian errors, one can use partial F-tests. 

\subsection{Our contribution}

As one of our main contributions, we deal here with the problem to obtain valid
p-values for hypotheses $H_{0,C}$ where $C$ is an arbitrary group of
(typically highly correlated) variables, for the high-dimensional scenario
where $p \gg n$. We address this important and open issue; note that
testing the global null-hypothesis $H_{0,\{1,\ldots ,p\}}$, in contrast to
the ``partial'' hypothesis $H_{0,C}$ for some $C$ with cardinality $1< |C| <
p$, is a rather different issue and has been addressed before
\citep[cf.]{goeman2006testing}.
Once we have valid p-values for $H_{0,C}$ for a arbitrary groups $C$, we
make use of the method from \citet{Meins08} leading to 
non-asymptotic bounds for strong 
control of the familywise error rate in a hierarchical structure.
For construction of the p-values, we rely on multiple sample splitting 
\citep{memepb09}. While this might be sub-optimal from a theoretical 
perspective, especially with respect to power, the method seems to perform 
well in a larger empirical study for individual hypotheses 
$H_{0,j}\ (j=1,\ldots ,p)$ in terms of reliable control of the familywise error
rate in multiple testing \citep{DeBuMeMe14}. 
We also extend the Shaffer improvement in \citet[Section 3.6]{Meins08} 
to the high-dimensional scenario, increasing the power of the hierarchical
method such that detection of more singletons than with the method from
\citep{memepb09} becomes possible.

Our second main contribution is the development of new methodology and
theory for hierarchical inference and testing of hypotheses, using
multiple sample splitting techniques (and \emph{multiple} sample splitting is
important for reproducibility \citep{memepb09}). Regarding 
methodology, the hierarchical approach allows for a substantially higher
number of so-called minimal true detections (MTDs: significant smallest
groups of variables) than the single variable analogue and has the
remarkable property of adaptively selecting a best resolution level
(MTDs with the smallest possible cardinality). 
We prove strong control of the familywise error rate of the hierarchical
method under a 
``zonal assumption'' which is weaker than the standard $\beta$-min condition
(used in \citet{memepb09}), that is, we do not require that all non-zero 
coefficients in the parameter-vector $\beta^0_j$ are sufficiently large. We 
demonstrate the finite sample behavior of the method with various empirical
results.  

We note that recently, \citet{meins13} describes another procedure for
dealing with highly correlated variables and hierarchical testing of groups
or clusters of variables. His method is an interesting alternative with the
remarkable property that it does not require (major) regularity assumptions
on the design matrix. The procedure is taking advantage of the special
structure of a linear model while our approach is: (i) more generic and
conceptually applicable to other (e.g. generalized linear) models, and (ii)
computationally much more efficient due to variable screening in a first
stage.   

\subsection{Outline of the paper}

In Section \ref{sectiondescription} we describe our method for obtaining
p-values for groups of variables and its use for hierarchical testing in
high-dimensional settings. We show in Section 
\ref{sectionFWER} that the familywise error rate (FWER) is strongly controlled,
and we describe a Shaffer improvement to increase the method's power while
keeping control over the FWER. Section 
\ref{sectionempirical} is devoted to empirical results: we show that
our procedure improves the single variable testing method of \citet{memepb09} in
settings with strong correlation among certain variables, particularly with
respect to minimal true detections (MTDs). In Section \ref{sectionrobust}
we provide theoretical evidence that the FWER is controlled even 
if a ``screening assumption'' required in Section \ref{sectionFWER} is not
satisfied. 
%The proofs of the theoretical results are given
%in the Supplemental Material.

\section{Description of method}\label{sectiondescription}

%Given a (possibly high-dimensional) linear model
%\begin{eqnarray}\label{linmodel}
%\by = \bx \beta^0 + \eps,
%\end{eqnarray}
%with $n \times p$ design matrix $\bx$, $p \times 1$ regression vector
%$\beta^0$ and $n \times 1$ response and noise vector $\by$ and $\eps$,
%respectively, our method provides for each cluster $C$ of variables in a
%given hierarchy $\mathcal{T}$ a p-value for the null hypothesis 
%$$H_{0,C}~:~ \beta_k = 0 \mbox{ for all }k \in C.$$ A small p-value (e.g. a
%p-value smaller than a given level $\alpha$) indicates
%that the null hypothesis can be rejected and thus some variables in the
%cluster are relevant.\\

Our method is based on four main steps: (i) hierarchical clustering of the
variables, (ii) variable screening in a linear model, (iii) significance
testing (with multiplicity adjustment) based on sample splitting, and (iv)
aggregation over multiple sample splits and hierarchical multiplicity
adjustment. See also Section \ref{subsec.schemesummary} for a schematic
summary.

\subsection{Clustering}\label{subsec.clustering}
In a first step, we construct a hierarchy of clusters. A
hierarchy, which can be represented as a tree-graph, $\mathcal{T}$ is a set
of clusters $\{C_k\}_k$ with $C_k 
\subseteq \{1,\dots,p\}$: the root node of the tree $\{1,\dots,p\}$ contains all
variables and for any  two clusters $C_k,C_{k'} \in \mathcal{T}$, either
one cluster is a subset of the other, or they have an empty
intersection. We use the notation $\pa(C)$ for the parent of a cluster $C$,
(the smallest superset of $C$), $\ch(C)$ for the children of a
cluster $C$ (all clusters that have $C$ as parent). Cluster $C$ is called an
ancestor of cluster $D$ if $D \subset C$.

As noted in \citet{Meins08}, the hierarchy can be derived from specific
domain knowledge or in some other natural way. The
philosophy of the method is that highly correlated variables (or variables
which are nearly linearly dependent) should end up in a
single small cluster: it will then be relatively easy to identify the cluster as
relevant, if it contains at least some variables from the active set 
$S_0$.
For our empirical results, we consider standard hierarchical clustering
based on correlation between variables, or a novel
hierarchical scheme using canonical correlations between clusters
\citep{buru12}. 

Once the hierarchical structure is given, the method goes on with a
hierarchical version of the multi sample-splitting procedure from
\citet{memepb09}. The following two steps, described in Sections
\ref{sec:Screening} and \ref{sec:TestandMulti} have to be repeated
for each sample split, indexed by $b=1,\ldots, B$ where $B$ is the number
of sample splits (since $B > 1$, we use the terminology multi
sample-splitting). 

\subsection{Screening}\label{sec:Screening}
The original data of sample size $n$ is split into two disjoint groups,
$N_{in}^{(b)}$ and 
$N_{out}^{(b)}$, i.e. a split $\{1,\dots,n \} = N_{in}^{(b)} \cup
N_{out}^{(b)}$ is chosen. The groups are chosen of equal size if $n$ is even
or satisfy $|N_{out}^{(b)}|=|N_{in}^{(b)}|+1$ if $n$ is odd.

Then, using only $N_{in}^{(b)}$, estimate with a screening procedure the
set of active predictors $\hat{S}^{(b)}$. A prime example is the Lasso
\citep{T96}.

\subsection{Testing and multiplicity adjustment}\label{sec:TestandMulti}

By considering for each cluster $C$
in the hierarchy $\mathcal{T}$ its intersection with $\hat{S}^{(b)}$, an
induced hierarchy with root $\hat{S}^{(b)}$ is given. Due to this
construction, assuming that the cardinality $|\hat{S}^{(b)}| < n/2$, the
situation is not high-dimensional anymore. Therefore, on this induced
hierarchy, we can apply testing procedure
similar as in \citet{Meins08}, the 
difference being that the hierarchical adjustment is not performed at this
stage but in Section \ref{sec:aggregating} after the aggregation over many
sample splits.

Based on the other half of the sample $N_{out}^{(b)}$, we use the classical
partial F-test with the full model $\hat{S}^{(b)}$ and submodel $C \cap
\hat{S}^{(b)}$ for the null hypothesis $H_{0,C \cap \hat{S}^{(b)}}$, where
$C \in {\cal T}$ is a given cluster. Thereby, we implicitly
assume that the submatrix $\bx_{\hat{S}^{(b)}}$ with columns corresponding
to $\hat{S}^{(b)}$ is of full rank (since $|\hat{S}^{(b)}| < n/2$). We then
assign the p-value from the partial F-test to the entire cluster $C$,
although we have only used the variables in $C \cap \hat{S}^{(b)}$. If a
cluster $C$ does not 
contain selected variables from $\hat{S}^{(b)}$, we set the p-value to
1. In summary, we define:
\begin{equation}\label{eq:pv}
p^{C,(b)} = 
\begin{cases} p_{\mathrm{partial \: F-test}}^{C \cap S^{(b)}}\
  \mbox{based on}\ \by_{N_{out}^{(b)}},\bx_{N_{out}^{(b)},\hat{S}^{(b)}} &,
  \text{if $C \cap \hat{S}^{(b)} \neq \emptyset$},\\
1 &, \text{if $C \cap \hat{S}^{(b)} = \emptyset$,}
\end{cases} 
\end{equation}
Then, for $C \in \mathcal{T}$ define the multiplicity adjusted (non-aggregated)
p-value as
\begin{equation}\label{multiplicityadj}
p_{adj}^{C,(b)} = \min \big(\, p^{C,(b)}
\frac{|\hat{S}^{(b)}|}{|C \cap \hat{S}^{(b)}|}~,~1 \big)
\end{equation}
if $C \cap \hat{S}^{(b)} \neq \emptyset$ and $p_{adj}^{C,(b)} = 1$ otherwise.
%\begin{comment}
%For $C \in \mathcal{T}$ define the hierarchically adjusted (nonaggregated)
%p-values as
%$$p_{hieradj}^{C \cap \hat{S}^{(b)}} = \max_{D \in \mathcal{T}:C \subseteq
%  D} p_{adj}^{C \cap \hat{S}^{(b)}}$$ 
%such that the hierarchically adjusted (nonaggregated) p-value of a cluster
%$C$ is always bigger than the hierarchically adjusted p-value of the parent
%cluster.
%\end{comment}

\subsection{Aggregation and hierarchical adjustment}\label{sec:aggregating}
By repeating the steps in Section \ref{sec:Screening} and
\ref{sec:TestandMulti} for $b = 1,\dots,B$, we obtain for each
cluster $C$ of the hierarchy ${\cal T}$ a set of $B$ p-values 
$p_{adj}^{C,(1)},\ldots ,p_{adj}^{C,(B)}$. We aggregate these p-values by
considering their empirical quantile. 

For $\gamma \in (0,1)$ define the aggregated p-values
$$Q^C(\gamma) = \min \big\{ \,1~,~q_\gamma \big( \big\{p_{adj}^{C, (b)} / \gamma;\, b=1,\dots,B \big\} \big) \big\},$$
where $q_\gamma(\cdot)$ is the (empirical) $\gamma$--quantile
function. Finally, define the hierarchically adjusted (aggregated) p-values as
$$Q_h^C(\gamma) = \max_{D \in \mathcal{T}:C \subseteq
  D} Q^D(\gamma)$$ 
such that the hierarchically adjusted (aggregated) p-value of a cluster
$C$ is always bigger than the hierarchically adjusted (aggregated) p-value
of an ancestor cluster.
In Section \ref{sectionFWER} we show that for any fixed $\gamma \in (0,1)$
the $Q_h^C(\gamma)$ are correct p-values. At this stage, $\gamma$ should be
considered as a pre-specified parameter of the method.  

Similarly as in \citet{memepb09}, error control is not guaranteed if we
optimize over $\gamma$, that is, for
each $C$ we would choose the minimal $Q_h^C(\gamma)$. Nevertheless, it is
possible to eliminate parameter $\gamma$ by proceeding as follows. Define 
\begin{equation}\label{defiPC}
P^C = \min \big\{ \,1~,~(1-\log\gamma_{\min}) \inf_{\gamma \in
  (\gamma_{\min},1)} Q^C(\gamma) \big\},
\end{equation}
for a lower bound $\gamma_{\min} \in (0,1)$ for $\gamma$, typically
$\gamma_{\min} = 0.05$. Then proceed with the hierarchical adjustment of
$P^C$ by defining
$$P_h^C = \max_{D \in \mathcal{T}:C \subseteq
  D} P^C.$$ 
These values $P_h^C$ are the final output of our method: we will show
again in Section \ref{sectionFWER} that $P_h^C$ are a valid 
p-value controlling the familywise error rate when testing over all $C \in
\mathcal{T}$. 

Our proposed ``top-down'': method is schematically summarized in Section
\ref{subsec.schemesummary} below. In the Supplemental Material we
illustrate a sub-ideal alternative ``bottom-up'' approach which is
empirically found to exhibit substantially less power. 

\subsection{Schematic summary of the method}\label{subsec.schemesummary}

We summarize our proposed method with the following schematic
description.

\begin{description}
\item[Step 1: Clustering]
$$\{\bx_1,\dots,\bx_p\}
\hspace{4mm} \xrightarrow{\mbox{clustering}} \hspace{4mm}
\mathcal{T}$$
Repeat for $b=1,\dots,B$:
\item[Step 2: Screening]
$$\{1,\dots,n\}=N \hspace{4mm}\xrightarrow{\mbox{sample split}}\hspace{4mm}  
N_{in}^{(b)} \cup N_{out}^{(b)} 
\hspace{4mm}\xrightarrow{\mbox{screening}}\hspace{4mm} 
\hat{S}^{(b)}$$
\item[Step 3: Testing and multiplicity adjustment]
$$|\hat{S}^{(b)}|<|N_{out}^{(b)}| 
\hspace{4mm}\xrightarrow{\mbox{testing}}\hspace{4mm}
p^{C,(b)}
\hspace{4mm}\xrightarrow{\mbox{multiplicity adjustment}}\hspace{4mm}  
p_{adj}^{C,(b)}$$
End of repeating for $b=1,\dots,B$.
\item[Step 4: Aggregation and hierarchical adjustment]
$$p_{adj}^{C,(b)}
\hspace{4mm}\xrightarrow{\mbox{aggregation}}\hspace{4mm}
Q^C(\gamma)
\hspace{4mm}\xrightarrow{\mbox{hierarchical adjustment}}\hspace{4mm}
Q^C_h(\gamma)$$ 
$$p_{adj}^{C,(b)}
\hspace{4mm}\xrightarrow{\mbox{aggregation}}\hspace{4mm}
Q^C(\gamma)
\hspace{4mm}\xrightarrow{\mbox{elimination of }\gamma}\hspace{4mm}
P^C 
\hspace{4mm}\xrightarrow{\mbox{hierarchical adjustment}}\hspace{2mm}
P^C_h$$ 
\end{description}

%\begin{description}
%\item[Step 1: CLUSTERING]
%$$\left.\begin{array}{cl}
%\{\bx_1,\dots,\bx_p\} &\\
%\downarrow & \hspace{-1.2cm}\mbox{clustering} \\
%\mathcal{T} & \\
%\end{array}\right.$$
%Repeat for $b=1,\dots,B$:\\
%
%\item[Step 2: SCREENING]
%$$\left.\begin{array}{cl}
%N=\{1,\dots,n\} &\\
%\downarrow & \hspace{-1.2cm}\mbox{sample split} \\
%N=N_{in}^{(b)} \sqcup
%N_{out}^{(b)} & \\
%\downarrow & \hspace{-1.2cm}\mbox{screening}\\
%\hat{S}^{(b)} & \\
%\end{array}\right.$$

%\item[Step 3: TESTING AND MULTIPLICITY ADJUSTMENT]
%$$\hspace{2.5cm}\left.\begin{array}{cl}
%N_{out}^{(b)}\\
%\downarrow & \hspace{-0.6cm}\mbox{testing}\\
%p^{C,(b)} & \\
%\downarrow & \hspace{-0.6cm}\mbox{multiplicity adjustment}\\
%p_{adj}^{C,(b)} & \\
%\end{array}\right.$$
%End of repeating for $b=1,\dots,B$.\\

%\item[Step 4: AGGREGATION AND HIERARCHICAL ADJUSTMENT]
%$$\hspace{2.5cm}\left.\begin{array}{rcl}
%& \downarrow & \hspace{-0.6cm}\mbox{aggregating}\\
%& Q^C(\gamma) & \\
%& \swarrow \hspace{0.5cm} \searrow & \mbox{elimination of }\gamma\\
%Q^C(\gamma) & & \hspace{-0.3cm}P^C\\
%\downarrow & & \downarrow\hspace{.1cm}\mbox{hierarchical adjustment}\\
%Q_h^C(\gamma) & & \hspace{-0.3cm}P_h^C
%\end{array}\right.$$
%\end{description}

%In the Supplemental Material we give a schematic summary of the method
%  and illustrate an alternative ``bottom-up'' approach.

\section{Familywise error rate control}\label{sectionFWER}
We show in this section, that if the variable selection procedure
$\hat{S}$ satisfies two assumptions, then the p-values $Q_h^C(\gamma)$ and
$P_h^C$ defined in Section \ref{sectiondescription} control the familywise
error rate.
%as the hierarchically adjusted p-values $p^C_{h,adj}$ in \citet[Theorem
%1]{Meins08}. 
The assumptions are: 
\begin{eqnarray*}
&&\mbox{(A1) \it{Sparsity property}:}\; |\hat{S}| < n/2.\\
&&\mbox{(A2) \it{$\delta$-Screening property}:}\; \PP[\hat{S} \supseteq
S_0] \geq 1-\delta,\ \mbox{where}\ 0 < \delta < 1.
\end{eqnarray*}
The \textit{sparsity property} in (A1) implies that for each sample split
$b$ it holds that $|\hat{S}^{(b)}|<|N_{out}^{(b)}|$, a condition which is
necessary to apply classical tests. The \textit{$\delta$-screening
  property} in (A2) ensures that all the 
relevant variables are retained with high probability ($\delta$ is
typically small). While (A1) is the same condition as in
\citet[Section 3.1]{memepb09}, we consider with (A2) a slight modification
of the assumption in \citet[Section 3.1]{memepb09} in order to obtain
non-asymptotic bounds for familywise error rate control. We provide a
relaxation of the screening property (A2) in Section \ref{sectionrobust}.  

\medskip\noindent
\textbf{Example.} Consider the Lasso as a variable selection method
$\hat{S}$. Assumption (A1) holds for any value of the regularization
parameter. Assumption (A2) is ensured when requiring the following
conditions:
\begin{enumerate}
\item The design matrix $\bx$ satisfies the compatibility condition with
  compatibility constant $\phi_0^2$ \citep[cf. (6.4)]{pbvdg11}. Furthermore,
  it is normalized such that each column $\bx^{(j)}$ satisfies
  $\|\bx^{(j)}\|_2^2/n = 1$ for all $j=1,\ldots ,p$. 
\item A beta-min condition holds (we use here the notation $s_0=|S_0|$):
\begin{eqnarray*}
\min_{j \in S_0}|\beta^0_j| > 16 \sigma \sqrt{\frac{t^2 + 2 \log(p)}{n}}
s_0/\phi_0^2. 
\end{eqnarray*}
\end{enumerate}
Then, the Lasso with regularization parameter $\lambda = 4 \sigma_{\eps}
\sqrt{\frac{t^2 + 2 \log(p)}{n}}$ satisfies (A2) with $\delta = 2
\exp(-t^2/2)$ \citep[Lem. 6.2, Thm. 6.1, (2.13)]{pbvdg11}. 

\bigskip
%While (A1) is automatically satisfied by the
%Lasso (REFfirstLasso) ??? , the 
%\textit{$\delta$-screening property} for small $\delta$ in (A2) typically
%requires, among other assumptions, that sample size $n$ is reasonable
%large such 
%that $\log(p)/n$ is sufficiently small. 
%%by such methods only asymptotically. 
Especially when the correlation among the variables is high (violating the
compatibility condition in the Example above), one
can hardly expect the Lasso or any other variable selection method to
satisfy (A2) for very small $\delta$. In
Section \ref{sectionempirical} we present empirical results showing that
the hierarchical p-value method still works well even when the screening
property is not satisfied for a small value $\delta$, and we provide some
supporting theoretical results for this fact in Section
\ref{sectionrobust}. 

For a given hierarchy $\mathcal{T}$, denote the set of clusters
that fulfill the null hypothesis by
$$\mathcal{T}_0 := \{ C \in \mathcal{T}\,:\,H_{0,C} \mbox{ is
  fulfilled}\}.$$
Furthermore, for some fixed parameter $\gamma \in (0,1)$ and some fixed
significance level $\alpha \in (0,1)$,
$$\mathcal{T}^\gamma_{rej} = \{ C \in \mathcal{T}\,:\, Q_h^C(\gamma)
\leq \alpha \}$$ 
is the set of rejected clusters based on the
p-values $Q_h^C(\gamma)$ and analogously,
$$\mathcal{T}_{rej} = \{ C \in \mathcal{T}\,:\, P_h^C
\leq \alpha \}$$ is the set of the rejected clusters when considering the
p-values $P_h^C$. The latter does not require to choose or pre-specify a
parameter like $\gamma$. 
\begin{theo}\label{theo1}
Assume that (A1) and (A2) hold. Then for any significance level $\alpha \in
(0,1)$ and $B$ denoting the number of sample splits:
\begin{enumerate}
\item For any fixed $\gamma \in
(0,1)$, the p-values $Q_h^C(\gamma)$ control the familywise error rate in the
sense that: 
$$\PP(\mathcal{T}^\gamma_{rej} \cap \mathcal{T}_0 \neq \emptyset) \leq
\alpha + 1 - (1-\delta)^B \leq \alpha + B\delta.$$
\item The p-values $P_h^C$ control the familywise error rate in the sense that:
$$\PP(\mathcal{T}_{rej} \cap \mathcal{T}_0 \neq \emptyset) \leq \alpha+ 1 -
(1-\delta)^B \leq \alpha + B\delta.$$
\end{enumerate}
\end{theo}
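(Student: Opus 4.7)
The plan is to condition on a ``good screening event'' and then stack three already-established p-value transformations---the partial F-test, the within-split Bonferroni correction, and the Meinshausen--Meier--B\"uhlmann quantile aggregation---reducing the hierarchical ancestor-max step to a monotonicity observation.

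First I would define $\mathcal{E}=\bigcap_{b=1}^{B}\{\hat S^{(b)}\supseteq S_0\}$. Because the $B$ sample splits are drawn independently, (A2) gives $\PP[\mathcal{E}]\ge(1-\delta)^B$, so on $\mathcal{E}^c$ I absorb a probability mass of at most $1-(1-\delta)^B\le B\delta$ into the final bound; the rest of the proof is carried out on $\mathcal{E}$. On this event each raw p-value $p^{C,(b)}$ with $C\in\mathcal{T}_0$ is super-uniform: if $C\cap\hat S^{(b)}=\emptyset$ the definition sets it to $1$, and otherwise $C\in\mathcal{T}_0$ kills the $j\in C$ coefficients while $S_0\subseteq\hat S^{(b)}$ kills the $j\notin\hat S^{(b)}$ coefficients, giving the reduced model $\by_{N_{out}^{(b)}}=\bx_{N_{out}^{(b)},\hat S^{(b)}\setminus C}\beta^0_{\hat S^{(b)}\setminus C}+\eps_{N_{out}^{(b)}}$, so by (A1) the classical partial F-statistic has its exact $F$-distribution under $H_{0,C\cap\hat S^{(b)}}$.

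Next I would show that within any single split $b$ the family $\{p_{adj}^{C,(b)}\}_{C\in\mathcal{T}_0}$ has FWER at most $\alpha$, i.e.\ $\min_{C\in\mathcal{T}_0}p_{adj}^{C,(b)}$ is super-uniform on $\mathcal{E}$. Following \citet{Meins08}, at any level of the induced hierarchy rooted at $\hat S^{(b)}$ the clusters' intersections with $\hat S^{(b)}$ are disjoint and sum to $|\hat S^{(b)}|$, so a one-line Bonferroni bound on this partition gives the level-wise FWER, and the inheritance principle lifts this to the whole tree. Aggregating over splits I invoke the $\gamma$-quantile lemma of \citet{memepb09}, which turns the super-uniform sequence $\{\min_{C\in\mathcal{T}_0}p_{adj}^{C,(b)}\}_b$ into the single super-uniform quantity $q_\gamma(\{\min_Cp_{adj}^{C,(b)}/\gamma\}_b)\wedge 1$. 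Two monotonicities then close the loop: $\min_{C\in\mathcal{T}_0}Q^C(\gamma)\ge q_\gamma(\{\min_Cp_{adj}^{C,(b)}/\gamma\}_b)\wedge 1$ (the minimum of quantiles dominates the quantile of the minimum), and $Q_h^C(\gamma)=\max_{D\supseteq C}Q^D(\gamma)\ge Q^C(\gamma)$, hence $\{\min_{C\in\mathcal{T}_0}Q_h^C(\gamma)\le\alpha\}\subseteq\{q_\gamma(\{\min_Cp_{adj}^{C,(b)}/\gamma\}_b)\le\alpha\}$, the latter having probability at most $\alpha$ on $\mathcal{E}$. Combining with the budget from $\mathcal{E}^c$ gives part~(1), and part~(2) follows identically after replacing the $\gamma$-quantile step by the $(1-\log\gamma_{\min})\inf_\gamma$ transformation whose validity as a $\gamma$-free p-value is Theorem~3.2 of \citet{memepb09}.

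The main obstacle is the second transition: the \citet{Meins08} Bonferroni-type bound is standardly proved for a \emph{fixed} hierarchy with pre-specified marginally valid p-values, while here the hierarchy depends on the random set $\hat S^{(b)}$ and the ancestor-max is deferred until after the multi-split aggregation. The cleanest way to handle this is to run the within-split argument conditionally on $\hat S^{(b)}$---so that the induced tree is deterministic and $p^{C,(b)}$ is exactly uniform for $C\in\mathcal{T}_0$---and then to use monotonicity, rather than any further distributional argument, to push the bound through both the quantile aggregation and the ancestor-max.
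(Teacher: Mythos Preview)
Your reduction to the screening event $\mathcal{E}$ and the final monotonicity step $Q_h^C\ge Q^C$ are both fine, and the idea of showing per-split super-uniformity and then feeding it into the quantile-aggregation lemma is attractive. However, the claim that $\min_{C\in\mathcal{T}_0}p_{adj}^{C,(b)}$ is super-uniform on $\mathcal{E}$ is not justified by the argument you give. Meinshausen's inheritance principle says that a false rejection at some $C\in\mathcal{T}_0$ forces a false rejection at its \emph{maximal} null ancestor, but this implication is a property of the hierarchically adjusted p-values $\max_{D\supseteq C}p_{adj}^{D,(b)}$, not of the raw $p_{adj}^{C,(b)}$. In your scheme the ancestor-max is postponed until after aggregation, so at the per-split stage you are taking the minimum over \emph{all} null clusters, including long nested chains; the level-wise Bonferroni bounds you cite do not add up to a bound over the whole tree (a chain of depth $L$ would give $\le L\alpha$, not $\le\alpha$). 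In your final monotonicity chain the step $Q_h^C\ge Q^C$ discards the hierarchical adjustment entirely, so nothing later can recover what inheritance would have bought you.

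The repair is the one the paper uses: exploit $Q_h^C$ \emph{before} dropping down to $Q^C$. Since $C\subset C'$ implies $Q_h^C(\gamma)\ge Q_h^{C'}(\gamma)$, one has $\min_{C\in\mathcal{T}_0}Q_h^C(\gamma)=\min_{C\in\tilde{\mathcal{T}}_0}Q_h^C(\gamma)\ge\min_{C\in\tilde{\mathcal{T}}_0}Q^C(\gamma)$, where $\tilde{\mathcal{T}}_0$ is the family of \emph{maximal} null clusters. These are pairwise disjoint, so $\sum_{C\in\tilde{\mathcal{T}}_0}|C\cap\hat S^{(b)}|\le|\hat S^{(b)}|$, and a single Bonferroni line gives $\PP(\min_{C\in\tilde{\mathcal{T}}_0}p_{adj}^{C,(b)}\le u)\le u$ on $\mathcal{E}$. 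From here your quantile-of-min monotonicity and the aggregation lemma go through verbatim with $\tilde{\mathcal{T}}_0$ in place of $\mathcal{T}_0$. The paper carries out essentially this computation directly (union bound over $\tilde{\mathcal{T}}_0$, Markov on $\pi^C$, then the disjointness sum), without packaging the per-split step as a standalone super-uniformity statement; once you make the reduction to $\tilde{\mathcal{T}}_0$ your modular version is equivalent.
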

A proof is given in the Supplemental Material.
From Theorem \ref{theo1}, 
providing non-asymptotic bounds for familywise error rate control, one can
easily derive asymptotic familywise error control using the assumption  
\begin{eqnarray*}
&&\mbox{(A2') \it{Screening property}:}\; \lim_{n \to \infty} \PP[\hat{S}
\supseteq S_0] =1.
\end{eqnarray*}

\medskip\noindent
\textbf{Example (continued).} For the Lasso, under the assumption 1.
described in the Example above, and replacing assumption 2. by an
asymptotic beta-min condition 
\begin{eqnarray}\label{beta-min}
\min_{j \in S_0}|\beta^0_j| \gg \sqrt{\frac{\log(p)}{n}} s_0/\phi_0^2,
\end{eqnarray}
we have that (A2') holds (as $n \to \infty$, $p = p_n$ and $s_0 =
s_{0;n}$ and $\phi_0^2 = \phi_{0;n}^2$ are allowed to change with $n$). 

We then have the following result. 
\begin{corr}\label{cor1} 
Assume that (A1) and (A2') hold. Then for any fixed $\gamma \in (0,1)$
and significance level $\alpha \in (0,1)$:
$$\limsup_{n\to\infty} \PP(\mathcal{T}^\gamma_{rej} \cap \mathcal{T}_0 \neq
\emptyset) \leq \alpha$$
$$\limsup_{n\to\infty} \PP(\mathcal{T}_{rej} \cap \mathcal{T}_0 \neq
\emptyset) \leq \alpha.$$
\end{corr}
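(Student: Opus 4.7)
The plan is to derive Corollary~\ref{cor1} directly from Theorem~\ref{theo1} by letting the non-asymptotic bound on the screening failure probability shrink to zero along the sequence indexed by $n$. Concretely, (A2') asserts that $\PP[\hat S \supseteq S_0] \to 1$, so I would introduce $\delta_n := 1 - \PP[\hat S \supseteq S_0]$ and note that $\delta_n \to 0$ as $n \to \infty$. By construction $\PP[\hat S \supseteq S_0] \geq 1 - \delta_n$, hence assumption (A2) is satisfied at sample size $n$ with the particular value $\delta = \delta_n$ (while (A1) is assumed to hold throughout).

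Next I would apply Theorem~\ref{theo1} at each $n$, using $\delta_n$ in the bound. For fixed $\gamma \in (0,1)$ and fixed $\alpha \in (0,1)$, part 1 of the theorem yields
$$\PP(\mathcal{T}^\gamma_{rej} \cap \mathcal{T}_0 \neq \emptyset) \leq \alpha + B \delta_n,$$
and part 2 gives the analogous bound $\PP(\mathcal{T}_{rej} \cap \mathcal{T}_0 \neq \emptyset) \leq \alpha + B \delta_n$. Here $B$ is the (fixed) number of sample splits chosen by the user, so it does not depend on $n$.

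Finally, since $B$ is fixed and $\delta_n \to 0$, we have $B \delta_n \to 0$, and taking $\limsup_{n \to \infty}$ on both sides of each inequality yields
$$\limsup_{n\to\infty} \PP(\mathcal{T}^\gamma_{rej} \cap \mathcal{T}_0 \neq \emptyset) \leq \alpha, \qquad \limsup_{n\to\infty} \PP(\mathcal{T}_{rej} \cap \mathcal{T}_0 \neq \emptyset) \leq \alpha,$$
which is the claim. There is no real obstacle here: the whole work is carried by Theorem~\ref{theo1}, and the only subtlety worth flagging is the implicit requirement that $B$ be held fixed as $n \to \infty$ (or at least grow slowly enough that $B \delta_n \to 0$); if $B$ were allowed to scale with $n$, the conclusion would still follow provided $B_n \delta_n \to 0$, which can be made explicit in the statement if desired.
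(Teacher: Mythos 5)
Your proposal is correct and is exactly the argument the paper intends: the text remarks only that the corollary follows "easily" from Theorem \ref{theo1}, and setting $\delta_n = 1 - \PP[\hat S \supseteq S_0] \to 0$ under (A2') and letting $n\to\infty$ in the bound $\alpha + B\delta_n$ is that derivation. Your added remark that $B$ must stay fixed (or satisfy $B_n\delta_n\to 0$) is a reasonable clarification of an assumption the paper leaves implicit.
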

%
%We note that for example for the Lasso, under suitable conditions, the
%convergence in (A2') is exponential in $n$ in the sense that the
%\textit{$\delta$-screening property} holds for $\delta=\delta(n)=O(e^{-n})$
%and hence because of Theorem \ref{theo1}, the same rate of
%convergence holds for the familywise error rate control. 
%%Although we have given in this
%%section two non-asymptotical versions of familywise error rate control,
%%both theorems \ref{theo1} and \ref{theo2} work asymptotically in $n$ if the
%%assumption (A1) is asymptotical. To see that one just needs to use the same
%%argument as in \citet[proof of theorem 3.1]{memepb09}.

\subsection{Shaffer improvement in high-dimensional setting}\label{sectionShaffer}
A similar version of the Shaffer improvement as described in 
\citet[Section 2.4]{Meins08} can be applied to our method. 
The main idea, shown by \citet{Shaf86}, is that in a hierarchical
structure, some combinations of null hypothesis can be excluded a priori,
and incorporating constraints on the possible combinations of null
hypotheses can increase the power of the method. 

Consider a binary hierarchy $\mathcal{T}$ and a screened set $\hat{S} \subset
\{1, \dots, p\}$. The siblings of a cluster $C$ are the children of the
parent of $C$ which are not identical to $C$, $\si(C)=\ch(\pa(C))
\backslash C$. Define the effective cluster size $|C|^{\hat{S}}_{e\!f\!f}$ of
the cluster $C \in \mathcal{T}$ restricted to the screened set $\hat{S}$ as
\begin{eqnarray*}
|C|^{\hat{S}}_{e\!f\!f}= \left\{
\begin{array}{ll}
|C \cap \hat{S}|,&\mbox{if $\exists\, E \in \ch(\si(C))$
  s.t. $E \cap
  \hat{S} \neq \emptyset$}\\
|C \cap \hat{S}|+|\si(C) \cap \hat{S}|,&\mbox{otherwise.}
\end{array}
\right.
\end{eqnarray*}
Note that when no screening is performed ($\hat{S} = \{1, \dots, p\}$)
this definition coincides with the definition of the effective cluster size
in \citet{Meins08}. Moreover the condition ``$\exists\, E \in \ch(\si(C))$
  s.t. $E \cap \hat{S} \neq \emptyset$'' is stronger than the condition
  ''$\si(C)$ is not a leaf node`` of \citet{Meins08} and hence the
  improvement given by our definition of restricted cluster size is bigger
  than the one given by a straightforward adaption of \citet{Meins08}.

The Shaffer improvement in the high dimensional setting is then given
by considering the multiplicity adjustment  
\begin{equation}\label{Shaffermultiplicity}
p_{adj}^{C,(b)} = \min \big(\, p^{C,(b)}
\frac{|\hat{S}^{(b)}|}{|C|^{\hat{S}^{(b)}}_{e\!f\!f}}~,~1 \big),
\end{equation}
instead of using the multiplicity adjustment in (\ref{multiplicityadj}).

Obviously, since the effective cluster size is always at least as big as the
cluster size, the Shaffer improvement produces smaller p-values and hence
increases the power of the method while
%Adapting the
%arguments of \citet{Meins08} to our definition of effective cluster size, 
%it is possible to show that 
the familywise error rate control is still guaranteed, as described next. 
\begin{theo}\label{theoShaffer}
Assume the hierarchy $\mathcal{T}$ is binary. Then, Theorem \ref{theo1} and Corollary \ref{cor1} still hold when using the
Shaffer improvement (\ref{Shaffermultiplicity}) as multiplicity adjustment,
assuming the conditions of Theorem \ref{theo1} or Corollary \ref{cor1},
respectively. 
\end{theo}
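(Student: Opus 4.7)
The strategy is to reduce Theorem~\ref{theoShaffer} to a single combinatorial inequality and otherwise rerun the proof of Theorem~\ref{theo1} verbatim. The only modification introduced by (\ref{Shaffermultiplicity}) is the replacement of $|C \cap \hat{S}^{(b)}|$ by $|C|_{e\!f\!f}^{\hat{S}^{(b)}}$ in the multiplicity factor; the definitions of $Q^C(\gamma)$, $Q_h^C(\gamma)$, $P^C$ and $P_h^C$, the screening event $E = \bigcap_{b=1}^B \{\hat{S}^{(b)} \supseteq S_0\}$ with $\PP(E) \geq (1-\delta)^B$, and the validity of the per-split partial F p-value under $H_{0,C}$ on $E$ are therefore all unchanged. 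The proof of Theorem~\ref{theo1} controls FWER via a Bonferroni step over the pairwise disjoint collection $\mathcal{M}_0 \subseteq \mathcal{T}_0$ of maximal null clusters (null clusters whose parent is non-null), combined with the hierarchical upward adjustment $Q_h^C = \max_{D \supseteq C} Q^D$ which forces any rejected null to drag along a rejected ancestor in $\mathcal{M}_0$. Plugging Shaffer into this chain leaves only one new thing to prove, namely that on $E$,
\begin{equation*}
\sum_{C \in \mathcal{M}_0} |C|_{e\!f\!f}^{\hat{S}^{(b)}} \;\leq\; |\hat{S}^{(b)}|.
\end{equation*}

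The key point is that inflation of the effective size can happen only when the sibling is a leaf, and in that situation the screening property supplies the disjointness needed for the sum bound. First, for every $C \in \mathcal{M}_0$ the parent $\pa(C)$ is non-null while $H_{0,C}$ holds; since $\mathcal{T}$ is binary and $\pa(C) = C \cup \si(C)$, the active variable witnessing $\neg H_{0,\pa(C)}$ must sit in $\si(C)$, so $\si(C)$ is non-null. Second, the ``otherwise'' branch of $|C|_{e\!f\!f}^{\hat{S}}$ triggers exactly when $E \cap \hat{S}^{(b)} = \emptyset$ for every $E \in \ch(\si(C))$: if $\si(C)$ is an internal node its children partition it, which forces $\si(C) \cap \hat{S}^{(b)} = \emptyset$ and so the added term is zero and no real inflation occurs. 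Genuine inflation is therefore confined to the case in which $\si(C)$ is a leaf of $\mathcal{T}$, and then on $E$ one has $\si(C) \cap \hat{S}^{(b)} \supseteq \si(C) \cap S_0 \neq \emptyset$.

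To close the sum bound, I would check that the extra pieces $\si(C) \cap \hat{S}^{(b)}$ (for $C$ with leaf sibling) are disjoint from the $C' \cap \hat{S}^{(b)}$ already counted and from each other. Distinct $C, C' \in \mathcal{M}_0$ are themselves disjoint (nested maximal nulls would violate maximality), and they cannot share a parent (otherwise they would be each other's sibling, contradicting that siblings of maximal nulls are non-null), so $\si(C) \neq \si(C')$; being leaves, these siblings are tree-disjoint, and the same argument shows each $\si(C)$ is disjoint from every element of $\mathcal{M}_0$ (a leaf has no proper sub-cluster in $\mathcal{T}$, and any strict super-cluster would inherit the leaf's active variable and hence be non-null). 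Thus $\{C \cap \hat{S}^{(b)}\}_{C \in \mathcal{M}_0} \cup \{\si(C) \cap \hat{S}^{(b)} : C \in \mathcal{M}_0,\ \si(C)\text{ a leaf}\}$ is a disjoint family of subsets of $\hat{S}^{(b)}$, yielding the bound. Plugging this back into the Bonferroni/hierarchical-adjustment machinery of Theorem~\ref{theo1} gives $\PP(\mathcal{T}^\gamma_{rej} \cap \mathcal{T}_0 \neq \emptyset \mid E) \leq \alpha$ and likewise for $P_h^C$, and unconditioning costs at most $1 - (1-\delta)^B$; Corollary~\ref{cor1} drops out by sending $\delta \to 0$. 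The only genuine obstacle — and the sole place where the binarity hypothesis is actually used — is the implication ``$\pa(C)$ non-null and $H_{0,C}$ holds $\Rightarrow$ $\si(C)$ non-null'' in the very first step; everything after it is essentially bookkeeping.
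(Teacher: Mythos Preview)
Your proof is correct and follows the same skeleton as the paper's: reduce Theorem~\ref{theoShaffer} to the single combinatorial inequality $\sum_{C \in \tilde{\mathcal{T}}_0}|C|_{e\!f\!f}^{\hat{S}^{(b)}}\le |\hat{S}^{(b)}|$ and then verify that the sets contributing to this sum are pairwise disjoint subsets of $\hat{S}^{(b)}$. Your reduction to the case where $\si(C)$ is a leaf (noting that for an internal sibling the ``otherwise'' branch forces the extra term to vanish, since the children partition $\si(C)$) is a tidy simplification the paper does not make; the paper instead argues the general case directly by showing that whenever $C\subset\si(D)$ or $\si(C)\subset\si(D)$ the corresponding extra term for $D$ either is not counted or equals zero. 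Two small remarks: (i) your sentence that ``the screening property supplies the disjointness'' is inaccurate---your own disjointness argument uses only the tree structure and the null/non-null dichotomy, never the event $E$, and indeed the paper proves the sum bound unconditionally; (ii) the step ``children of $\si(C)$ partition $\si(C)$'' is an extra structural assumption on $\mathcal{T}$ that the paper's argument does not need, though it holds for any dendrogram produced by hierarchical clustering.
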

A proof is given in the Supplemental Material. We note that an extension of
the results in Theorem \ref{theo1} and \ref{theoShaffer} to control the
false discovery rate \citep{benjamini1995controlling}, instead of the FWER,
for \emph{hierarchically} ordered hypotheses (with corresponding dependent
p-values) seems very challenging.

\section{Empirical results}\label{sectionempirical}
In this section we study the performance of our hierarchical method and
compare it with the single-variable testing method of \citet{memepb09}. Section
\ref{sectionBigblocks} provides most informative results about our new
method, in particular for understanding the differences in comparison to
the single-variable approach.  

In a simulation study, we consider both synthetic and semi-real 
data. The former are used to study special designs where
we expect one of the two methods to perform clearly better. The semi-real
data are used to obtain insights of what happens when the design matrix comes
from real high-dimensional datasets. In our simulation study, all the data
are generated from a linear model
\begin{eqnarray*}
Y = \bx \beta^0 + \eps,
\end{eqnarray*}
where $\bx$ is a $n \times p$ matrix from synthetic (designs 1 to 3) or
real (designs 4 to 7) data, $\beta^0$ is a $p \times 1$ synthetic
regression vector and $\eps \sim \mathcal{N}_n(0,\sigma^2I_n)$ is a
synthetic noise term. The data are always standardized such that $\bx$ has
columns with empirical mean zero and variance one. 

We also apply the two different methods to a real dataset in Section
\ref{sectionMotif}.  

\subsection{Implementation of the method}\label{sectionmethods}

The implementation of our hierarchical method, and also of the single
variable procedure from
\citet{memepb09}, requires to make some choices. We mostly consider fairly
standard and ``easy to use'' methods; unless there is some deeper
methodological difference, as in our choice of additionally considering a
less standard clustering procedure. 
%\Jacopo{Many reasonable choices of clustering tools, screaning
%tools and the various parameters are possible. We pursuit here the goal of
%optimizing the efficiency of the methods used and give the best possible
%insigh to their (relatives) strenghts and weaknesses. Ich habe das wieder
%hinzugefuegt da es vom Reviewer 1 gefragt war, siehe 2.2.2 in reply to
%reviewer, wir koennen das gerne umformulieren wenn du meinst so sei nicht gut} 

For clustering we consider the recently proposed canonical correlation
clustering of \citet{buru12} and the standard hierarchical clustering 
(using the \texttt{R}-fuction \texttt{hclust})
with distance between two covariables set as 1 less the absolute
correlation between the covariables, using complete linkage
(other linkages lead to similar results). For variable screening, we
use the Lasso (i.e., $\hat{S}$ from the non-zero estimated coefficients
from Lasso) with regularization parameter chosen by 10-fold
cross-validation. 

As in \citet{memepb09}, we choose $B=50$ as the number of
sample splits.
% and we 
%use the Lasso for screening (i.e., 
%$\hat{S}$) with regularization parameter chosen by 10-fold
%cross-validation.
%, and the partial F-Test in step 3.
For aggregation, the p-values $P_h^C$ in (\ref{defiPC}) are computed over a
grid of 
$\gamma$-values between $\gamma_{min}=0.05$ and $1$ with grid-steps of size
$0.025$. For both hierarchical methods we use the Shaffer improvement
described in 
Section \ref{sectionShaffer}. As nominal significance level we always
consider $\alpha=5\%$. 

\subsection{Simulation study with synthetic and semi-real data}\label{sectionScenarios}
We consider 42 scenarios based on 7 designs. For each design we
consider 6 settings by varying the number of variables $p$ in the model
($p=200$, $p=500$ and $p=1000$) and the signal to noise ratio (SNR, for
each design and choice of $p$ we consider a low and a high SNR, namely for
$p=200$ we use $\mbox{SNR}=4$ and $\mbox{SNR}=8$, for $p=500$ we use
$\mbox{SNR}=8$ and $\mbox{SNR}=16$, for $p=1000$ we use $\mbox{SNR}=16$ and 
$\mbox{SNR}=32$). The signal to noise ratio is defined by
\begin{eqnarray*}
\mbox{SNR} = \sqrt{\frac{(\beta^0)^T\bx^T\bx\beta^0}{n\sigma^2}}
\end{eqnarray*}
and our choices of signal to noise ratios are avoiding scenarios where the
methods have degenerate performance of 0\% or 100\%, respectively. 
%are motivated by our goal of comparing the performances of single- and
%hierarchical-p-value testing, in particular we want to avoid scenarios
%where both methods have a 100\% or a 0\% performance. 
In designs 1 to 5 the sparsity $s_0$
is set to be 10, while in designs 6 and 7 it is set to be 6. The non-zero 
components of $\beta^0$ are randomly set as $\beta_j^0=1$ or
$\beta_j^0=-1$ for $j \in S_0$. The choice of $S_0$ 
is design-specific and hence explained with
the descriptions of the designs as follows.\\

{\bf Design 1: equi correlation}\\
We set $n=100$ and generate $\bx$ from a centered multivariate normal
distribution with equal variances $\rho_{jj}=1$ and covariances equal to 
$\rho_{jk}=0.3$ between variables $j$ and $k$ for $j \neq k \in
\{1,\dots,p\}$. The 10 active variables are chosen randomly among the $p$ 
covariables.\\

{\bf Design 2: high correlation within small blocks}\\
We set $n=100$ and generate $\bx$ from a centered multivariate normal
distribution with covariance $\rho_{jk}$ between variables $j$ and $k$ set
as $\rho_{jj}=1$ for all $j$, $\rho_{j,j+1}=\rho_{j+1,j}=0.9$ for $j \in
\{1,3,5,7,9,11,13,15,17,19\}$ and $\rho_{jk}=0$ otherwise. We choose in each
of the 10 two-dimensional blocks with high correlation one active variable,
i.e. for $j \in \{1,3,5,7,9,11,13,15,17,19\}$ we choose randomly either $j \in
S_0$ or $j+1 \in S_0$.\\

{\bf Design 3: high correlation within large blocks}\\
We set $n=100$ and generate $\bx$ from a centered multivariate normal
distribution with a block diagonal covariance matrix with 10
$p/10$-dimensional blocks $B_{p/10}(0.9)$ defined by
$(B_{p/10}(0.9))_{jj}=1$ and $(B_{p/10}(0.9))_{jk}=0.9$ for $j \neq k$. We
randomly choose in each of these p/10-dimensional blocks with high
correlation one active variable.\\

{\bf Design 4: Riboflavin dataset with normal correlation}\\
We consider the Riboflavin dataset \citep{bumeka13} with $n=71$
and choose randomly $p$ (i.e. 200, 500 or 1000 depending on the setting)
among 4088 covariables in the whole dataset. 
The 6 active variables are chosen randomly among the $p$ covariables.\\

{\bf Design 5: Breast dataset with normal correlation}\\
We consider the Breast dataset \citep{vV2002} with $n=117$
and choose randomly $p$ (i.e. 200, 500 or 1000 depending on the setting)
among 24481 covariables in the whole dataset. 
The 10 active variables are chosen randomly among the $p$ covariables.\\

{\bf Design 6: Riboflavin dataset with high correlation}\\
We consider again the Riboflavin dataset as in
design 4, but choose $p$ covariables as follows: a covariable is randomly
chosen among all 4088 covariables in the whole dataset. Then the 9
covariables with the highest absolute correlation with the first one are
chosen to build an ''high correlated'' 10-dimensional block. Then another
covariable is chosen among the remaining 4078 covariables of the whole
dataset and another ''high correlated'' 10-dimensional block is analogously
built. We repeat this procedure until we have $p$ covariables. The 6 active
variables are chosen randomly among the set $\{j; j=10k+1, 0 \leq k \leq
p/10-1 \}$.\\

{\bf Design 7: Breast dataset with high correlation}\\
We consider again the Breast dataset as in
design 5, but choose $p$ covariables as follows: a covariable is randomly
chosen among all 24481 covariables in the whole dataset. Then the 9
covariables with the highest absolute correlation with the first one are
chosen to build an ''high correlated'' 10-dimensional block. Then another
covariable is chosen among the remaining 24471 covariables of the whole
dataset and another ''high correlated'' 10-dimensional block is analogously
built. We repeat this procedure until we have $p$ covariables. The 10 active
variables are chosen randomly among the set $\{j; j=10k+1, 0 \leq k \leq
p/10-1 \}$.\\
%The sources of the Riboflavin and the Breast datasets can be found in section \ref{sectionData}.

\subsubsection{Performance measures for simulation
  study}\label{sectionevaluation} 

Besides the familywise error rate, we consider, among other aspects, the
following one-dimensional statistics measuring power (while
Section \ref{sectionBigblocks} provides
a more informative picture by avoiding to compress to one-dimensional
performance measures).  
%We are interested in two aspects: familywise error rate
%control and power.
%the performance of the method: we want as many true variables as
%possible to be detected and to have a comparison between the performance of
%our new method with the performance of the method of
%\citet{memepb09}.
 
We use two different performance functions. The first one
is defined as
\begin{eqnarray}\label{performanceFunction1}
\mbox{Performance 1} = \frac{1}{|S_0|} \sum_{\mbox{MTD C}} \frac{1}{|C|},
\end{eqnarray}
where the sum is over all minimal true detections (which we denote
by ``MTD''). Thereby:
\begin{changemargin}{0.12\textwidth}{0.12\textwidth} 
A cluster is said to be a MTD if it satisfies all of the following:
\begin{itemize}
\item $C$ is a significant cluster, e.g. has p-value $<5\%$. (``Detection'')
\item There is no significant sub-cluster $D \subset C$. (``Minimal'')
\item $C \notin \mathcal{T}_0$, i.e. there is at least one active variable in
$C$. (``True'')
\end{itemize}
\end{changemargin}
%\begin{eqnarray*}
%& &\mbox{A cluster $C$ is said to be a MTD if it is significant (e.g. p-value
%  $< 5\%$)}\\
%& &\mbox{and there is no significant sub-cluster $D \subset C$}.
%\end{eqnarray*}
The Performance 1 is always between 0 and 1, and it is exactly 1 when
each active variable is selected as a singleton. Moreover, the contribution 
to the Performance 1 of MTD $C$ is independent from the number of active
variables that are in $C$. Although 
this penalizes our new method, it reflects the fact that from $P^C_h<5\%$
one can only conclude that there is at least one active variable in $C$
without having further information whether there are additional active
variables in $C$ and which of the variables in $C$ are active.

As second performance function we consider a slightly modified version of
the Performance 1, where only MTDs with cardinality $|C| \leq 20$
are considered, and a ``bonus'' is given for each MTD,
independently from its cardinality (if the latter is at most 20): 
\begin{eqnarray}\label{performanceFunction2}
\mbox{Performance 2} = \frac{1}{|S_0|} \sum_{\mbox{MTD C with }|C| \leq
20} \frac{1}{2} \Big( \frac{1}{|C|}+1 \Big).
\end{eqnarray}
The Performance 2 is also always between 0 and 1, and it is again exactly
equal to 1 if each active variable is selected as a singleton. 

Moreover, for the single variable method, both performance measures are the
same as only singletons can be selected. Correct selection of a cluster
with more than one variable is less valuable than a singleton, with both
performance measures: Performance 2, however, is putting less emphasis on
the size of a selected cluster. The choice of the bound for the cluster
being at most 20 in Performance 2 is motivated by the idea that too large
clusters are ``uninteresting'' in many practical applications (e.g. a
genetic pathway consists of about up to 20 genes, and a cluster would
represent a pathway). 

\subsubsection{Familywise error rate control (FWER)}
For each of the 42 scenarios described in Section \ref{sectionScenarios} 
we make 100 independent simulation runs varying only
the synthetic noise term $\eps$ and count the number where at least
one false selection is made (i.e. there exists a cluster $C \in \mathcal{T}_0
\cap \mathcal{T}_{rej}$). According to Theorem \ref{theoShaffer} we
expect this number to be at most $100 \alpha = 5$ ($\alpha = 0.05$). 
\begin{table}[!h]
\centering
\begin{tabular}[h]{|l|c||c|c|c||c|c|c|}
\hline
& & \multicolumn{6}{c|}{Familywise error rate (in \%)}\\
\cline{3-8}
Design & $p$ & \multicolumn{3}{c||}{low SNR} & \multicolumn{3}{c|}{high
  SNR}\\
\cline{3-8}
& & Single & Cancorr & Hclus & Single & Cancorr & Hclus\\
\hline\hline
& 200 & 0 & 0 & 0 & 0 & 0 & 0\\
\cline{2-8}
equi & 500 & 0 & 0 & 0 & 0 & 0 & 0\\
\cline{2-8} 
corr & 1000 & 0 & 0 & 0 & 0 & 0 & 0\\
\hline
& 200 & 5 & 5 & 5 & 0 & 0 & 0\\
\cline{2-8}
small & 500 & \cellcolor{first}7 & \cellcolor{first}7 & \cellcolor{first}8 & 0 & 0 & 0\\
\cline{2-8} 
blocks & 1000 & 0 & 0 & 0 & \cellcolor{first}7 & \cellcolor{first}8 & \cellcolor{first}8\\
\hline
& 200 & 0 & \cellcolor{first}18 & \cellcolor{first}8 & 0 & 0 & 0\\
\cline{2-8}
big & 500 & 0 & 0 & 0 & 0 & 0 & 0\\
\cline{2-8} 
blocks & 1000 & 0 & 0 & 0 & 0 & 0 & 0\\
\hline
Riboflavin & 200 & 0 & 0 & 0 & 0 & 0 & 0\\
\cline{2-8}
normal & 500 & 0 & 0 & 0 & 0 & 0 & 0\\
\cline{2-8} 
corr & 1000 & 0 & 0 & 0 & 0 & 0 & 0\\
\hline
Breast & 200 & 0 & 0 & 0 & 0 & 0 & 0\\
\cline{2-8}
normal & 500 & 0 & 0 & 0 & 0 & 0 & 0\\
\cline{2-8} 
corr & 1000 & 0 & 0 & 0 & 0 & 0 & 0\\
\hline
Riboflavin & 200 & 0 & 0 & 0 & 0 & 0 & 0\\
\cline{2-8}
high & 500 & 0 & 0 & 0 & 0 & 0 & 0\\
\cline{2-8} 
corr & 1000 & 0 & 0 & 0 & 0 & 0 & 0\\
\hline
Breast & 200 & 0 & 0 & 0 & 0 & 0 & 0\\
\cline{2-8}
high & 500 & 0 & 0 & 0 & 0 & 0 & 0\\
\cline{2-8} 
corr & 1000 & 0 & 0 & 0 & 1 & 1 & 2\\
\hline
\end{tabular}
\caption{Familywise error rate in \%: Number of cases with at least one false
  selection, out of 100 simulation runs. The scenarios where the critical
  value of 5 is overtaken are marked in gray (only one scenario with 18\%
  is substantially differing from the nominal 5\% level).}
\label{tableFWER}
\end{table}
The results illustrated in Table \ref{tableFWER} show that for 39 of the 42
scenarios, FWER control 
%as expected from theorem \ref{theoShaffer} 
holds for all methods, while it doesn't hold for any method in two scenarios 
and for the hierarchical methods in one scenario. In
37 out of the 42 scenarios there is no false selection at all. It is not
surprising that the most problematic design with respect to FWER is the
``small blocks'' design, since there each active predictor is
highly correlated with a false variable from $S_0^c$ and hence it is rather
difficult for our screening method (the Lasso) to guarantee $\hat{S}
\supseteq S_0$. 

\subsubsection{Power: Performance 1}
For each of the 42 scenarios described in Section \ref{sectionScenarios} we
make 100 simulation runs varying the synthetic noise term $\eps$ and the synthetic
regression vector $\beta^0$. We then calculate the average Performance 1 in 
(\ref{performanceFunction1}), i.e., Performance 1 is averaged over 100
simulation runs.  
\begin{table}[!h]
\centering
\begin{tabular}[h]{|l|c||c|c|c||c|c|c|}
\hline
& & \multicolumn{6}{c|}{Performance 1 in \%}\\
\cline{3-8}
Design & $p$ & \multicolumn{3}{c||}{low SNR} & \multicolumn{3}{c|}{high
  SNR}\\
\cline{3-8}
& & Single & Cancorr & Hclus & Single & Cancorr & Hclus\\
\hline\hline
& 200 & \cellcolor{first}48.9 & \cellcolor{second}48.8 & 45.6 & \cellcolor{first}97.5 & 97.1 & \cellcolor{second}97.3\\
\cline{2-8}
equi & 500 & \cellcolor{first}45.1 & \cellcolor{second}44.6 & 42.9 & \cellcolor{second}78.0 & \cellcolor{first}78.0 & 76.7\\
\cline{2-8} 
corr & 1000 & \cellcolor{first}23.6 & \cellcolor{second}23.3 & 22.2 & \cellcolor{first}26.9 & \cellcolor{second}26.5 & 25.7\\
\hline
& 200 & 29.5 & \cellcolor{second}39.8 & \cellcolor{first}40.7 & 89.1 & \cellcolor{first}95.1 & \cellcolor{second}94.7\\
\cline{2-8}
small & 500 & 37.2 & \cellcolor{second}45.1 & \cellcolor{first}45.6 & 56.2 & \cellcolor{second}60.1 & \cellcolor{first}60.6\\
\cline{2-8} 
blocks & 1000 & 14.2 & \cellcolor{second}16.9 & \cellcolor{first}17.2 & 20.0 & \cellcolor{second}21.6 & \cellcolor{first}21.5\\
\hline
& 200 & 2.9 & \cellcolor{second}7.4 & \cellcolor{first}7.7 & 18.2 & \cellcolor{second}24.7 & \cellcolor{first}24.8\\
\cline{2-8}
big & 500 & 0.0 & \cellcolor{second}0.1 & \cellcolor{first}0.3 & 6.3 & \cellcolor{second}6.8 & \cellcolor{first}7.4\\
\cline{2-8} 
blocks & 1000 & 0.0 & 0.0 & 0.0 & \cellcolor{second}5.0 & 4.9 & \cellcolor{first}5.1\\
\hline
Riboflavin & 200 & \cellcolor{first}19.7 & \cellcolor{second}19.0 & 19.0 & \cellcolor{second}46.0 & 45.7 & \cellcolor{first}46.5\\
\cline{2-8}
normal & 500 & \cellcolor{first}15.7 & \cellcolor{second}15.6 & 15.2 & \cellcolor{first}33.2 & \cellcolor{second}33.0 & 31.7\\
\cline{2-8} 
corr & 1000 & \cellcolor{second}10.7 & 10.5 & \cellcolor{first}10.8 & \cellcolor{second}24.7 & \cellcolor{first}24.9 & 24.3\\
\hline
Breast & 200 & \cellcolor{second}38.8 & \cellcolor{first}39.6 & 38.3 & \cellcolor{second}90.2 & \cellcolor{first}90.4 & 89.8\\
\cline{2-8}
normal & 500 & \cellcolor{first}38.8 & \cellcolor{second}38.8 & 37.1 & \cellcolor{second}76.0 & \cellcolor{first}75.9 & 75.8\\
\cline{2-8} 
corr & 1000 & \cellcolor{second}33.9 & \cellcolor{first}33.9 & 31.7 & \cellcolor{first}43.6 & \cellcolor{second}43.4 & 42.4\\
\hline
Riboflavin & 200 & 25.7 & \cellcolor{second}25.9 & \cellcolor{first}26.7 & 58.8 & \cellcolor{second}59.2 & \cellcolor{first}59.6\\
\cline{2-8}
high & 500 & \cellcolor{second}37.2 & 36.9 & \cellcolor{first}37.5 & 61.8 & \cellcolor{second}62.0 & \cellcolor{first}62.2\\
\cline{2-8} 
corr & 1000 & \cellcolor{second}31.8 & 31.5 & \cellcolor{first}32.6 & \cellcolor{second}48.3 & 48.2 & \cellcolor{first}48.8\\
\hline
Breast & 200 & 42.4 & \cellcolor{second}43.0 & \cellcolor{first}44.4 & 84.4 & \cellcolor{first}85.3 & \cellcolor{second}85.0\\
\cline{2-8}
high & 500 & 55.3 & \cellcolor{second}55.4 & \cellcolor{first}55.5 & \cellcolor{second}89.8 & 89.6 & \cellcolor{first}90.1\\
\cline{2-8} 
corr & 1000 & 57.0 & \cellcolor{second}57.1 & \cellcolor{first}57.3 &
\cellcolor{second}72.8 & 72.7 & \cellcolor{first}73.1\\
\hline\hline
\multicolumn{2}{|l||}{Avg. normal corr.} & \cellcolor{first}30.6 & \cellcolor{second}30.4 & 29.2 & \cellcolor{first}57.3 & \cellcolor{second}57.2 & 56.7\\
\hline
\hline\hline
\multicolumn{2}{|l||}{Avg. high corr} & 27.8 & \cellcolor{second}29.9 & \cellcolor{first}30.5 & 50.9 & \cellcolor{second}52.5 & \cellcolor{first}52.7\\
\hline
\hline\hline
\multicolumn{2}{|l||}{Average} & 29.0 & \cellcolor{first}30.2 & \cellcolor{second}29.9 & 53.7 & \cellcolor{first}54.5 & \cellcolor{second}54.4\\
\hline
\end{tabular}
\caption{Performance 1, averaged over 100 simulation runs, for the 
  methods ``single variable'', ``hierarchical with canonical correlation
  clustering'' and  
``hierarchical with \texttt{hclust} clustering''. The best and
  second best methods are marked in dark-gray and light-gray. The average
  performances in the bottom rows are averages over the corresponding or all
scenarios, respectively.}
\label{tablePerformance1}
\end{table}

The results are reported in Table \ref{tablePerformance1}. They show that, as 
expected, the hierarchical methods provide better results in the designs where the
correlation among the variables is rather high (designs 2,3,6 and 7), while
in the other designs the non-hierarchical method has in general a slightly
better performance. 
The method based on the \texttt{hclust} clustering is 
more sensitive with respect to high correlation among the variables than
the analogue based on canonical correlation clustering. In particular the
latter is best in 21 of the 24 scenarios which use design 2,3,6 and 7 while
it is the worst method in 16 of the 18 scenarios where the correlation is 
not particularly high.  
%It is interesting to note that the method that uses
%canonical correlation clustering has in just 3 out of 42 scenarios the
%worst performance, while in 8 scenarios (6 on which were expected to favour
%the single variable method) it has the best performance. 
We note that the differences among the methods are rather small: this is
mainly a consequence of our definition 
(\ref{performanceFunction1}) of the Performance 1 and $p$ being large. The
biggest (absolute) difference in the Performance 1 can be found in 
design 2 where the hierarchical methods have a performance up to 11.2
percent higher than the single variable method, while the biggest
deficit of a hierarchical method with respect to the single variable method
can be found in design 1 and amounts to 3.3 percent. As expected,
our results show that in general the Performance 1 (and also the differences
between them when considering the different methods) lowers when $p$
increases. Finally, it is interesting to note that in the scenarios that
favor the single variable method, the Performance 1 of the method with
canonical correlation clustering is very close to the Performance 1 of the
single variable method (the difference is at most 0.8 percent),
while in the other scenarios it might perform much better (differences of
up to 10.3 percent). 

\subsubsection{Power: Performance 2}

In Table \ref{tablePerformance2} we show the average Performance 2 of the
three considered methods for the 42 different scenarios, i.e., for each
scenario, Performance 2 is averaged over 100 simulation runs. While by
definition, Performance 1 and 2 are the same for the single variable
method, we find for both hierarchical methods that the
Performance 2 is generally higher than the Performance 1 (only in 6 out of
84 cases it is lower and the difference is at most 0.1 percent). This was
expected as the idea of Performance 2 is to give a little extra
reward to each correct selection, independently from the cardinality of the
selected cluster (given the latter is at most 20). In particular, the
method that benefits most from Performance 2 is the hierarchical method with
\texttt{hclust} clustering which has an average Performance 2 of 45.2 percent while
its average Performance 1 is 42.1 percent
(average is meant over all 
scenarios). We also note that for Performance 2, the difference between the
single variable and the hierarchical methods in the settings with high
correlation is much more evident. 
\begin{table}[!h]
\centering
\begin{tabular}[h]{|l|c||c|c|c||c|c|c|}
\hline
& & \multicolumn{6}{c|}{Performance 2 in \%}\\
\cline{3-8}
Design & $p$ & \multicolumn{3}{c||}{low SNR} & \multicolumn{3}{c|}{high
  SNR}\\
\cline{3-8}
& & Single & Cancorr & Hclus & Single & Cancorr & Hclus\\
\hline\hline
& 200 & \cellcolor{second}48.9 & \cellcolor{first}49.1 & 47.7 & \cellcolor{second}97.5 & 97.2 & \cellcolor{first}97.6\\
\cline{2-8}
equi & 500 & \cellcolor{first}45.1 & \cellcolor{second}44.6 & 43.5 & \cellcolor{second}78.0 & \cellcolor{first}78.0 & 77.0\\
\cline{2-8} 
corr & 1000 & \cellcolor{first}23.6 & \cellcolor{second}23.3 & 22.5 & \cellcolor{first}26.9 & \cellcolor{second}26.5 & 26.0\\
\hline
& 200 & 29.5 & \cellcolor{second}44.6 & \cellcolor{first}48.8 & 89.1 & \cellcolor{first}97.0 & \cellcolor{second}96.7\\
\cline{2-8}
small & 500 & 37.2 & \cellcolor{second}48.4 & \cellcolor{first}51.4 & 56.2 & \cellcolor{second}62.1 & \cellcolor{first}63.7\\
\cline{2-8} 
blocks & 1000 & 14.2 & \cellcolor{second}18.1 & \cellcolor{first}19.7 & 20.0 & \cellcolor{second}22.2 & \cellcolor{first}23.4\\
\hline
& 200 & 2.9 & \cellcolor{second}34.3 & \cellcolor{first}34.4 & 18.2 & \cellcolor{second}58.9 & \cellcolor{first}59.0\\
\cline{2-8}
big & 500 & 0.0 & \cellcolor{second}0.0 & \cellcolor{first}0.1 & 6.3 & \cellcolor{second}6.8 & \cellcolor{first}7.8\\
\cline{2-8} 
blocks & 1000 & 0.0 & 0.0 & 0.0 & \cellcolor{second}5.0 & 4.9 & \cellcolor{first}5.2\\
\hline
Riboflavin & 200 & \cellcolor{second}19.7 & 18.9 & \cellcolor{first}21.0 & \cellcolor{second}46.0 & 45.8 & \cellcolor{first}48.3\\
\cline{2-8}
normal & 500 & \cellcolor{second}15.7 & 15.5 & \cellcolor{first}15.9 & \cellcolor{first}33.2 & \cellcolor{second}33.0 & 32.4\\
\cline{2-8} 
corr & 1000 & \cellcolor{second}10.7 & 10.6 & \cellcolor{first}11.1 & \cellcolor{second}24.7 & \cellcolor{first}24.9 & 24.7\\
\hline
Breast & 200 & 38.8 & \cellcolor{second}39.5 & \cellcolor{first}40.2 & 90.2 & \cellcolor{first}90.5 & \cellcolor{second}90.5\\
\cline{2-8}
normal & 500 & \cellcolor{first}38.8 & \cellcolor{second}38.7 & 38.3 & \cellcolor{second}76.0 & 75.9 & \cellcolor{first}76.3\\
\cline{2-8} 
corr & 1000 & \cellcolor{second}33.9 & \cellcolor{first}33.9 & 32.1 & \cellcolor{first}43.6 & \cellcolor{second}43.4 & 42.8\\
\hline
Riboflavin & 200 & 25.7 & \cellcolor{second}25.8 & \cellcolor{first}32.2 & 58.8 & \cellcolor{second}59.2 & \cellcolor{first}63.0\\
\cline{2-8}
high & 500 & \cellcolor{second}37.2 & 36.8 & \cellcolor{first}40.4 & 61.8 & \cellcolor{second}62.0 & \cellcolor{first}63.5\\
\cline{2-8} 
corr & 1000 & \cellcolor{second}31.8 & 31.5 & \cellcolor{first}33.6 & \cellcolor{second}48.3 & 48.2 & \cellcolor{first}50.0\\
\hline
Breast & 200 & 42.4 & \cellcolor{second}44.2 & \cellcolor{first}49.5 & 84.4 & \cellcolor{first}86.7 & \cellcolor{second}88.0\\
\cline{2-8}
high & 500 & 55.3 & \cellcolor{second}55.8 & \cellcolor{first}58.5 & \cellcolor{second}89.8 & 89.7 & \cellcolor{first}90.9\\
\cline{2-8} 
corr & 1000 & 57.0 & \cellcolor{second}57.1 & \cellcolor{first}58.6 &
\cellcolor{second}72.8 & 72.7 & \cellcolor{first}73.7\\
\hline\hline
\multicolumn{2}{|l||}{Avg. normal corr.} & \cellcolor{first}30.6 & \cellcolor{second}30.5 & 30.2 & \cellcolor{first}57.3 & 57.2 & \cellcolor{second}57.3\\
\hline
\hline\hline
\multicolumn{2}{|l||}{Avg. high corr} & 27.8 & \cellcolor{second}33.1 & \cellcolor{first}35.6 & 50.9 & \cellcolor{second}55.5 & \cellcolor{first}57.1\\
\hline
\hline\hline
\multicolumn{2}{|l||}{Average} & 29.0 & \cellcolor{second}31.9 & \cellcolor{first}33.3 & 53.7 & \cellcolor{second}56.4 & \cellcolor{first}57.1\\
\hline
\end{tabular}
\caption{Performance 2, averaged over 100 simulation runs, for the methods
  ``single variable'', ``hierarchical with canonical correlation 
  clustering'' and  
``hierarchical with \texttt{hclust} clustering''. The best and
  second best methods are marked in dark-gray and light-gray. The average
  performances in the bottom rows are averages over the corresponding or all
scenarios, respectively.}
\label{tablePerformance2}
\end{table}
Some additional results regarding the variability of both
  Performance 1 and Performance 2 measures among the 100 different
  simulation runs is given in the Supplemental Material.

\subsection{A more detailed consideration}\label{sectionBigblocks} 

%Analysis of the effects of correlated variables: A closer look at the
%``large blocks''-design}\label{sectionBigblocks} 

The power results in the previous section are given in terms of
one-dimensional performance functions. Here, we provide more information
what our new method actually does and how it performs when looking beyond
one-dimensional summary statistics. On the other hand, to keep the exposition at
reasonable length, we focus on fewer simulation scenarios only. 

We consider the ``small blocks''- and ``large blocks''-designs (designs 2
and 3 of Section \ref{sectionScenarios}) with $p=200$, $\mbox{SNR}=8$, $s_0=10$ with the
non-zero components of $\beta^0$ randomly set as $\beta^0_j=\pm 1$ and
various values for the nontrivial covariances: $\rho \in \{0, 0.4, 0.7,
0.8, 0.85, 0.9, 0.95, 0.99\}$. For each of the 16 scenarios we make 100
simulation runs varying the synthetic noise term $\eps$. As results we
consider the FWER (portion of runs with at least a false detection over all
100 runs) and, averaged over the 100 runs, the number of MTDs and the number
of MTDs of some given cardinality. The results as shown in Table
\ref{tableSNRhigh}.

\begin{sidewaystable}[!htp]
%\begin{table}[!h]
%\centering
%\begin{rotate}{90}
\begin{tabular}[h]{|c|c||c|c|c||c|c|c||c|c|c||c|c||c|c||c|c|}
\hline
 & & \multicolumn{3}{c||}{FWER} &
\multicolumn{3}{c||}{\# MTD} &
\multicolumn{9}{c|}{\# MTD for given cardinality}\\
\cline{9-17}
$\rho$ & $\delta$ & \multicolumn{3}{c||}{} & \multicolumn{3}{c||}{} & \multicolumn{3}{c||}{$|\cdot|=1$} &
 \multicolumn{2}{c||}{$|\cdot|=2$} &
 \multicolumn{2}{c||}{$3 \leq |\cdot| \leq 10$} & 
 \multicolumn{2}{c|}{$11 \leq |\cdot| \leq 20$}
\\
\cline{3-17}
 & & S & C & H & S & C & H & S & C & H & C & H & C & H & C &
 H\\
\hline\hline
\multicolumn{17}{|c|}{``small blocks''-design with high SNR}\\
0 & 0.02 & 0 & 0 & 0 & 10 & 10 & 10 & 10 & 10 & 10 & 0 & 0 & 0 & 0
& 0 & 0\\
\hline
0.4 & 0.08 & 0 & 0 & 0 & 10 & 10 & 10 & 10 & 10 & 10 & 0 &
0 & 0 & 0 & 0 & 0\\
\hline
0.7 & 0.06 & 0 & 0 & 0 & 9.97 & 10 & 10 & 9.97 & 9.97 & 9.97 & 0.03 & 0.03 &
0 & 0 & 0 & 0\\
\hline
0.8 & 0.10 & 0 & 0 & 0 & 9.75 & 9.99 & 9.99 & 9.75 & 9.78 & 9.78 & 0.21 &
0.21 & 0 & 0 & 0 & 0\\
\hline
0.85 & 0.45 & 0 & 0 & 0 & 9.22 & 9.86 & 9.89 & 9.22 & 9.32 & 9.34 & 0.53 &
0.52 & 0 & 0 & 0.01 & 0.03\\
\hline
0.9 & 0.19 & 0 & 0 & 0 & 9.77 & 10 & 10 & 9.77 & 9.81 & 9.82 & 0.19 &
0.18 & 0 & 0 & 0 & 0\\
\hline
0.95 & 0.20 & 0 & 0 & 0 & 9.46 & 10 & 10 & 9.46 & 9.50 & 9.50 & 0.50 & 0.50 &
0 & 0 & 0 & 0\\ 
\hline
0.99 & 0.90 & 0.99 & 0.99 & 0.99 & 7.40 & 7.85 & 7.88 & 7.40 & 7.52 & 7.54 &
0.33 & 0.34 & 0 & 0 & 0 & 0\\
\hline
\hline
\multicolumn{17}{|c|}{``large blocks''-design with high SNR}\\
0 & 0.14 & 0 & 0 & 0 & 10 & 10 & 10 & 10 & 10 & 10 & 0 & 0 & 0 & 0
& 0 & 0\\
\hline
0.4 & 0.20 & 0 & 0 & 0 & 9.92 & 10 & 10 & 9.92 & 9.92 & 9.92 & 0.01 &
0.01 & 0.04 & 0.02 & 0.02 & 0.05\\
\hline
0.7 & 0.13 & 0 & 0 & 0 & 6.99 & 8.02 & 9.91 & 6.99 & 7.11 & 7.02 & 0 &
0.22 & 0.21 & 1.01 & 0.70 & 1.64\\
\hline
0.8 & 0.45 & 0 & 0 & 0 & 2.57 & 9.3 & 9.37 & 2.57 & 2.65 & 2.49 & 0.02 &
0.18 & 1.22 & 1.26 & 5.35 & 5.31\\
\hline
0.85 & 0.22 & 0 & 0 & 0 & 2.70 & 9.63 & 9.69 & 2.70 & 2.91 & 2.74 & 0 & 0.13 &
0.46 & 1.15 & 6.23 & 5.58\\
\hline
0.9 & 0.37 & 0 & 0 & 0 & 1.82 & 9.81 & 9.85 & 1.82 & 1.86 & 1.89 & 0.04 &
0.06 & 0.76 & 1.04 & 7.14 & 6.81\\
\hline
0.95 & 0.33 & 0 & 0 & 0.08 & 1.91 & 10 & 9.92 & 1.91 & 1.92 & 1.95 & 0.07 &
0.30 & 2.69 & 2.87 & 5.32 & 4.80\\ 
\hline
0.99 & 1.00 & 0.54 & 1.00 & 1.00 & 1.26 & 6.26 & 7.89 & 1.26 & 1.26 & 1.26 &
0.04 & 0.29 & 2.15 & 4.17 & 2.81 & 2.17\\
\hline 
\end{tabular}
\caption{Results of the simulation with the ``small
  blocks''- and ``large blocks''-design with high SNR (SNR=8) for different
  correlations.  
$\rho$ is the correlation in the design, $\delta$
the relative frequency of screenings with $\hat{S} \not\supset S_0$, MTD
denotes ``minimal true detections'', ``$2 \leq |\cdot| \leq 5$'' indicates
that MTD of cardinality between 2 and 5 are considered, S, C and H
represent the ``single variable'' resp. ''canonical correlation
clustering`` and ``hierarchical with \texttt{hclust} clustering'' method.}
\label{tableSNRhigh}
\end{sidewaystable}

FWER control (with nominal level $\alpha=5\%$) holds for most settings, even
for relatively high values of the fraction of failed screenings with
$\hat{S} \not\supset S_0$ (represented by $\delta$ in Theorem
\ref{theo1}). This indicates 
a robustness property of the methods in controlling FWER, beyond the
results of Theorem \ref{theo1} which requires that $\delta$ is very
small. 
%In the extreme scenario with 
%$\rho=0.99$, leading to $\delta = 1$, the Lasso exhibits a $48\%$ true
%positives rate and a $14\%$ false positives rate which is too weak for our
%method to control the FWER. ---- Ich wuerde das hier wegnehmen aus
%Platzgrunden und da man auch etwas zu smallblocks sagen musste, TPR-FPR wird spaeter analysiert}

Looking at the number of MTDs in Table \ref{tableSNRhigh}, we see that the
hierarchical method dominates the single variable method, with its
superiority increasing with
increasing correlation among the variables. Considering only the singleton
detections (MTDs with cardinality 1), there is only one scenario out of 16
where one of the two hierarchical methods is (slightly) worse than the
single variable method, while it is equal or (slightly) better for all
other settings.  

 We note that the hierarchical method can be better than the single
variable method because the Shaffer improvement of section
\ref{sectionShaffer} allows for better multiplicity adjustment. For the
scenarios with $\rho = 0$ (which by construction of the designs are the
same for ``small blocks'' and ``large blocks''), all 3 methods exhibit a
perfect accuracy. 

It is interesting to note that for the ``small blocks''-designs,
where the improvement given by the hierarchical over the single
variable method is smaller than for the ``large blocks''-designs, the
quality of the improvement should be considered as very 
high since almost all additional discoveries by the hierarchical method
have cardinality 2 only and often sum up to essentially all possible
discoveries. Further results regarding the MTDs for 4 (out of
the 16 considered) scenarios are given in the Supplemental Material.

\begin{figure*}[!htp]
\centerline{\includegraphics[width=1.1\textwidth, angle=0]{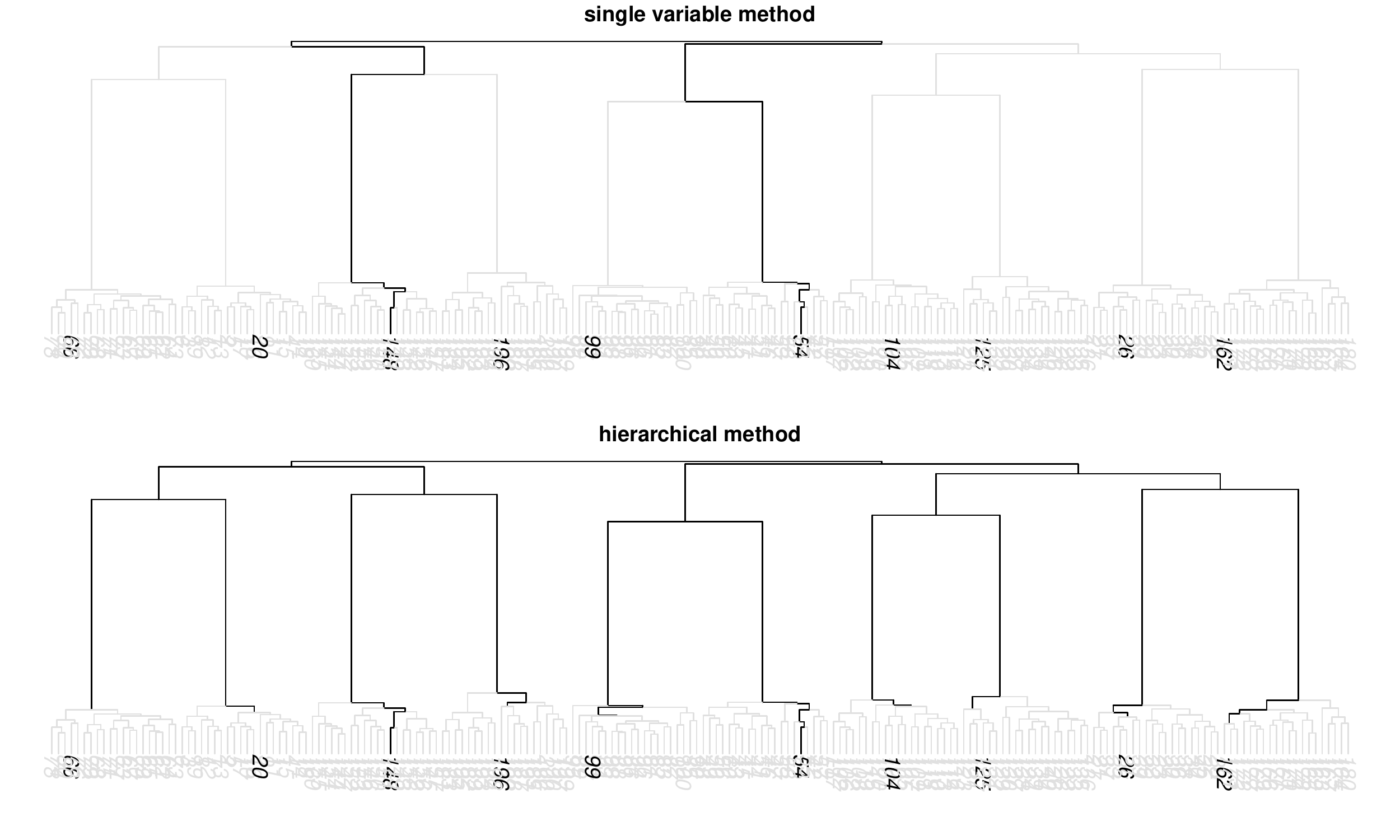}}
\vspace{-0.5cm}
\caption{Dendrograms for a representative run of the ``large
  blocks''-design with high SNR (SNR=8) and $\rho=0.85$. The active
  variables are labeled in black and the truly detected
  non-zero variables along the hierarchy are depicted in black.}
\label{figurebigblockshighSNR1}
\end{figure*}

For additional illustration, we
show in Figure \ref{figurebigblockshighSNR1} the dendrograms (in gray) for a
representative simulation run of the ``large blocks''-design with
$\rho=0.85$, for the
single variable method and the hierarchical method
with \texttt{hclust} clustering. The active variables are labeled in black
and truly detected non-zero variables
along the hierarchy are depicted in black. While the single 
variable method ``only'' detects 2 singletons, the hierarchical method
detects the same 2 singletons and achieves 8 more MTDs (one of which has
small cardinality 3 and hence is particularly informative). Figure
\ref{figuresmallblockshighSNR1} is analogous to Figure 
\ref{figurebigblockshighSNR1} for a simulation run of the ``small
blocks''-design with 
$\rho=0.8$. It shows that the hierarchical method improves the results of
the single variable method (9 detected singletons) by additionally
providing one MTD of cardinality 2 besides the same 9 singletons of the
single variable method. 
%The number of MTD of cardinality 2 for the ``small
%blocks'' design 
Thus, we provide evidence of the fact that the hierarchical method has the
powerful advantage of automatically going to the finer possible resolution,
depending on signal-strength and correlation structure among the variables.

\begin{figure*}[!htp]
\centerline{\includegraphics[width=1.1\textwidth, angle=0]{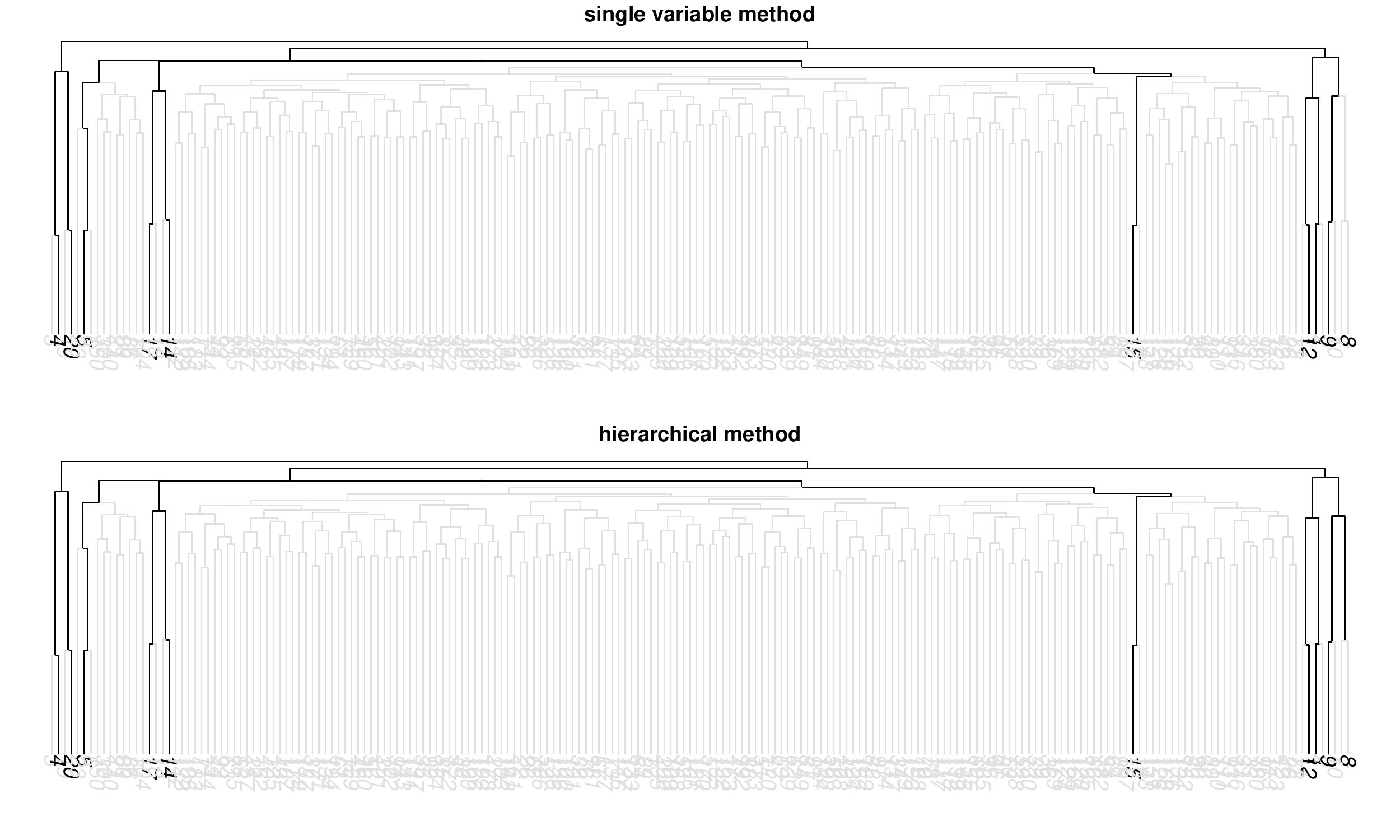}}
\vspace{-0.5cm}
\caption{Dendrograms for a representative run of the ``small
  blocks''-design with high SNR (SNR=8) and $\rho=0.80$. The active
  variables are labeled in black and the truly detected 
non-zero variables along the hierarchy are depicted in black.}
\label{figuresmallblockshighSNR1}
\end{figure*}

Finally, we illustrate in Figure \ref{figureTPRFPRhighSNR} the true
  positive (TPR) rates and false positive rates (FPR) of the Lasso, the
  single variable method and the hierarchical method with \texttt{hclust}
  clustering as points in the ROC space.

\begin{figure*}[!htp]
\centerline{\includegraphics[width=0.95\textwidth, angle=0]{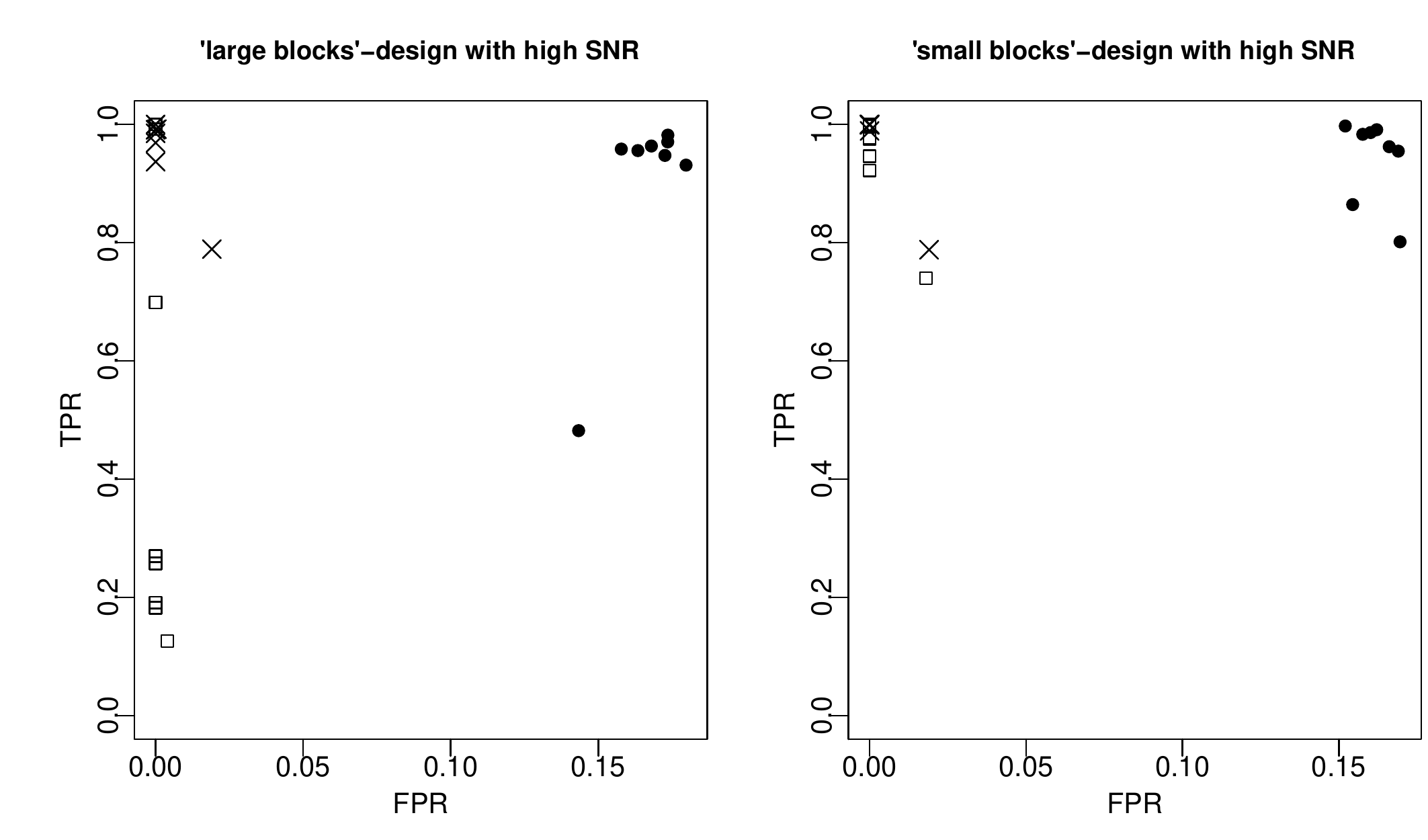}}
\vspace{-0.5cm}
\caption{True positive rate (TPR) and false positive rate (FPR) for the Lasso
  (bullet), the single variable method (box), and the hierarchical method with
  \texttt{hclust} clustering (cross) for different scenarios as indicated
  in the header of the plots.}
\label{figureTPRFPRhighSNR}
\end{figure*}

We note that, as expected from the philosophy of the single variable
  and hierarchical methods to control the FWER, there is a substantial
  difference between the FPR of the Lasso (0.15 to 0.18) and the FPR of the
  other 2 methods which is always less than 0.03 and equals 0 in most of
  the cases. For the ``small blocks''-design, this improvement of the FPR
  has no negative impact on the TPR, while for the more difficult ``large
  blocks''-design, a TPR comparable to that of the Lasso can only be
  achieved by the hierarchical method which significantly improves the TPR
  of the single 
variable method. It has to be remarked that the TPR and FPR are based on
MTDs (regardless from their cardinality), hence some care is needed when
comparing the TPR and FPR of the hierarchical with those of the other
methods where only singleton detections are possible. For a detailed
analysis we refer to Table \ref{tableSNRhigh}.

In the Supplemental Material we present the same detailed analysis as in this
section considering the same designs but with low $\mbox{SNR} = 4$ signal
to noise ratio. 

\subsection{Real data application: Motif Regression}\label{sectionMotif}
We apply the three methods described in Section \ref{sectionmethods} to a
real dataset about motif regression \citep{Conlon03} with $n = 287$ and
$p=195$, used in \citet[Section 4.3]{memepb09}.
%(the source is given in section \ref{sectionData}) 
The single variable method identifies one single 
predictor variable as significant (controlling the familywise error rate at
5\%). The same variable is found to be significant with the hierarchical
method with 
\texttt{hclust} clustering, while the hierarchical method with canonical
correlation clustering identifies as significant clusters, in the sense of
Section \ref{sectionevaluation}, the singleton, which is the same single
predictor as found by the other two methods, and a very big cluster of 165
variables. This is an interesting finding saying that besides the single
predictor variable, there are presumably other motifs, in the large
cluster, which play a relevant role. However, there is not enough
information to determine which of the variables in the large cluster are
significant as a single motif. 

\subsection{Conclusions from the empirical results}
We have studied error rate control and performance of the three methods
over 42 scenarios. The familywise error rate
control was respected for all methods in 39 out of 42 scenarios, for 2
scenarios it is slightly non-respected by all methods (7 or 8 runs with at
least a false selection out of 100). Considering Performance 1, we can see
that the single variable method performs slightly better for settings where the
correlation is not particularly high and the hierarchical methods perform
better for settings with high correlation. If one looks at Performance 2,
the disadvantage of the hierarchical methods in the ``normal 
correlation'' settings gets smaller (difference of 1.6 percent at
most and 0.2 percent on average), while their advantage in the ``high
correlation'' settings gets more substantial, with an average (over all
scenarios) improvement of 5 percent when considering canonical correlation
clustering and 7 percent when considering \texttt{hclust} clustering.
 
Taking a a more detailed and informative viewpoint in Section
\ref{sectionBigblocks}, the hierarchical method dominates the single
variable method in terms of minimal true detections 
(MTDs), while both method detects a similar number of singletons (the
hierarchical method being slightly preferable in this aspect, too). While 
both methods exhibit a good performance for the scenario generated with
$\rho=0$, the clear superiority of the hierarchical method becomes apparent for
increasing values of the correlations among the variables. The empirical
findings supporting this statement are supported with additional results
presented in the Supplemental Material. 

Applying the hierarchical methods to a real dataset about motif regression
\citep{Conlon03}, we obtained an indication that there might be other potential
motifs in a large cluster of size 165 which could play a significant role.
 
\section{Robustness of the method with respect to failure of variable screening}\label{sectionrobust}

The variable screening assumption (A2) seems far from necessary for
controlling the FWER as described in Theorem \ref{theo1}. Table
\ref{tableSNRhigh} provides empirical support for this fact. 

\subsection{A heuristic explanation}

The following argument yields some explanation why the
screening property is a too restrictive assumption. Let us assume that the
screening property fails because the beta-min condition (\ref{beta-min})
fails to hold. We then expect rather different selected sets
$\hat{S}^{(1)},\ldots,\hat{S}^{(B)}$, and the resulting p-values
$p_{adj}^{C,(1)},\ldots ,p_{adj}^{C,(B)}$ based on these selected sets are
likely to be rather different as well (since $\hat{S}^{(b)} \not \supseteq
S_0$ for most of the the $b$'s): many of them wouldn't exhibit a small value
and thus, when aggregating these p-values, the resulting aggregated p-value
is likely to be non-small. For example, when aggregating with the sample
median ($\gamma = 1/2$ in Section \ref{sec:aggregating}), more than 50\% of the p-values
would need to be small such that the aggregated value would be small as
well; and thus, the method only makes rejections if the single
p-values $p_{adj}^{C,(1)},\ldots ,p_{adj}^{C,(B)}$ are stable and a
substantial fraction of them are small (and hence, we expect conservative
behavior with respect to FWER control). We note that failure of (A2) due
to a different reason than failure of the beta-min condition
(\ref{beta-min}), such as ill-posed correlations among the variables, might
lead to stable p-values where a large fraction of them are spuriously
small: and in such a circumstance, the method might perform poorly with
respect to controlling the FWER. 

\subsection{A mathematical argument based on zonal assumptions}
We rigorously argue here that failure of the beta-min condition
(\ref{beta-min}) still leads to control of the FWER, assuming alternative
and weaker zonal assumptions \citep{bueman12}. 
%The goal of this section is to show that, even when the variable screening
%assumption (A2) 
%does not hold in good approximation, assuming zonal assumptions for
%regression coefficients \citep{bueman12}, familywise
%error rate control holds asymptotically. 

We partition the active set $S_0$ into sets with corresponding large and
small regression coefficients, respectively: 
\begin{eqnarray*}
& &S_0 = S_{0,large}(a) \cup S_{0,small}(u),\\
& &S_{0,large}(a) = \{ j; |\beta_j^0|>a  \},\ \ S_{0,small}(u) = \{ j;
|\beta_j^0|\leq u  \}, 
\end{eqnarray*}
where $ 0 < u < a$. 

%Theorem \ref{theo:robust}, which follows below after some preparatory
%definitions, is the key for a ``robustness''
%analysis with respect to potential violation of the screening property
%(A2).
%of least square estimation with dimension reduction and sample
%splitting. 
Consider the model $(\ref{mod1})$ with 
%$n \times p$ design matrix $\bx$, $p \times 1$ regression vector
%$\beta^0$ and $n \times 1$ response and 
noise vector 
%$\by$ and 
$\eps \sim \mathcal{N}(0,\sigma^2I)$. It can be rewritten as
\begin{eqnarray*}
\by = \bx^{\hat{S}} \beta_{\hat{S}}^0 + \bx^{\hat{S}^c} \beta_{\hat{S}^c}^0  + \eps,
\end{eqnarray*}
where $\hat{S}=\hat{S}(I_1) \subseteq \{1 \dots p \}$,
$|\hat{S}|\leq|I_2|$ and $\bx^{\hat{S}}$ the design sub-matrix of $\bx$
with columns corresponding to $\hat{S}$, and $I_1,\ I_2$ denote the two
sub-samples such that $I_1 \cup I_2 = \{1,\ldots
n\}$. Assume for the $|I_2| \times
|\hat{S}|$ design sub-matrix $\bx_{I_2}^{\hat{S}}$ of $\bx$ with rows
corresponding to $I_2$ and columns corresponding to $\hat{S}$:
%and its transposed $\bx_{I_2}^{\hat{S},T}$ 
\begin{eqnarray}\label{rank}
\rank\,( (\bx_{I_2}^{\hat{S}})^T \bx_{I_2}^{\hat{S}} )=|\hat{S}|.
\end{eqnarray}
Then define the following least squares estimates based on the sub-sample
$I_2$ and using only the variables from $\hat{S}$: 
\begin{eqnarray*}
& &\hat{\beta}_{I_2}^{\hat{S}} = \big((\bx_{I_2}^{\hat{S}})^T
\bx_{I_2}^{\hat{S}}\big)^{-1} (\bx_{I_2}^{\hat{S}})^T Y_{I_2},\\
& &P_{I_2}^{\hat{S}} = \bx_{I_2}^{\hat{S}} \big((\bx_{I_2}^{\hat{S}})^T
\bx_{I_2}^{\hat{S}}\big)^{-1} (\bx_{I_2}^{\hat{S}})^T,\ \ Q_{I_2}^{\hat{S}} =
I_{I_2} - P_{I_2}^{\hat{S}},\\  
& &\hat{Y}_{I_2}^{\hat{S}} = P_{I_2}^{\hat{S}} Y_{I_2} = \bx_{I_2}^{\hat{S}}
\hat{\beta}_{I_2}^{\hat{S}},\ \ \hat{\eps}_{I_2}^{\hat{S}} =
Q_{I_2}^{\hat{S}} Y_{I_2} = Y_{I_2} - 
\hat{Y}_{I_2}^{\hat{S}},\\
& &(\hat{\sigma}_{I_2}^{\hat{S}})^2 =
\frac{\|\hat{\eps}_{I_2}^{\hat{S}}\|_2^2}{|I_2|-|\hat{S}|}. 
\end{eqnarray*}
\begin{theo}\label{theo:robust}
Consider any selector $\hat{S}$ which is based on the sub-sample $I_1$ and
satisfies (\ref{rank}). Then, for a $q \times |\hat{S}|$-matrix $A$, 
$$\frac{(A\hat{\beta}_{I_2}^{\hat{S}}-A\beta_{\hat{S}}^0)^T
  \big(A\big(\bx_{I_2}^{\hat{S},T}
  \bx_{I_2}^{\hat{S}}\big)^{-1}A^T\big)^{-1}(A\hat{\beta}_{I_2}^{\hat{S}}-A\beta_{\hat{S}}^0
)}{q(\hat{\sigma}_{I_2}^{\hat{S}})^2} \sim
F_{q,|I_2|-|\hat{S}|}(\lambda_{\text{noncentral}})$$
is noncentral $F$-distributed with noncentrality parameter
\begin{eqnarray*}
& &\lambda_{\mbox{noncentral}}=\sum_{i=1}^q (\mbox{BIAS})_i^2,\\
& &\mbox{BIAS} = \frac{1}{\sigma} \big(A \big((\bx_{I_2}^{\hat{S}})^T
\bx_{I_2}^{\hat{S}}\big)^{-1} A^T \big)^{-1/2} A \big((\bx_{I_2}^{\hat{S}})^T
\bx_{I_2}^{\hat{S}}\big)^{-1}
(\bx_{I_2}^{\hat{S}})^T\bx_{I_2}^{\hat{S}^c}\beta_{\hat{S}^c}^0.
\end{eqnarray*} 
\end{theo}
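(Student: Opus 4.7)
\textbf{Proof plan for Theorem \ref{theo:robust}.} The plan is to work conditionally on the first sub-sample $I_1$, so that $\hat{S}=\hat{S}(I_1)$ is fixed, and to then decompose the OLS estimator $\hat{\beta}_{I_2}^{\hat{S}}$ into a deterministic (biased) part plus a Gaussian piece, recognise the numerator as a noncentral chi-square and the denominator as an independent central chi-square, and conclude with the standard definition of the noncentral $F$-distribution. First, substitute the true model $Y_{I_2}=\bx_{I_2}^{\hat{S}}\beta_{\hat{S}}^0+\bx_{I_2}^{\hat{S}^c}\beta_{\hat{S}^c}^0+\eps_{I_2}$ into the formula for $\hat{\beta}_{I_2}^{\hat{S}}$, using (\ref{rank}) to invert $(\bx_{I_2}^{\hat{S}})^T\bx_{I_2}^{\hat{S}}$. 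This gives
\[
\hat{\beta}_{I_2}^{\hat{S}}-\beta_{\hat{S}}^0 \;=\; \bigl((\bx_{I_2}^{\hat{S}})^T\bx_{I_2}^{\hat{S}}\bigr)^{-1}(\bx_{I_2}^{\hat{S}})^T\bx_{I_2}^{\hat{S}^c}\beta_{\hat{S}^c}^0 \;+\; \bigl((\bx_{I_2}^{\hat{S}})^T\bx_{I_2}^{\hat{S}}\bigr)^{-1}(\bx_{I_2}^{\hat{S}})^T\eps_{I_2},
\]
i.e.\ a deterministic bias (conditional on $I_1$) plus a centred Gaussian with covariance $\sigma^2\bigl((\bx_{I_2}^{\hat{S}})^T\bx_{I_2}^{\hat{S}}\bigr)^{-1}$.

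Next, left-multiply by $A$ and whiten: the random vector
\[
Z\;=\;\frac{1}{\sigma}\bigl(A\bigl((\bx_{I_2}^{\hat{S}})^T\bx_{I_2}^{\hat{S}}\bigr)^{-1}A^T\bigr)^{-1/2}\bigl(A\hat{\beta}_{I_2}^{\hat{S}}-A\beta_{\hat{S}}^0\bigr)
\]
is, conditionally on $I_1$, a $q$-dimensional Gaussian with identity covariance and mean exactly the vector $\mbox{BIAS}$ displayed in the theorem. Hence $\|Z\|_2^2$ equals the numerator of the test statistic divided by $\sigma^2$, and it is noncentral chi-squared with $q$ degrees of freedom and noncentrality $\sum_{i=1}^q\mbox{BIAS}_i^2=\lambda_{\mathrm{noncentral}}$.

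For the denominator, rewrite $\hat{\eps}_{I_2}^{\hat{S}}=Q_{I_2}^{\hat{S}}Y_{I_2}=Q_{I_2}^{\hat{S}}\eps_{I_2}$, since $Q_{I_2}^{\hat{S}}\bx_{I_2}^{\hat{S}}=0$ and, working in the standard framework in which the residual variance is referred to the chosen regression model, $Q_{I_2}^{\hat{S}}\bx_{I_2}^{\hat{S}^c}\beta_{\hat{S}^c}^0$ is treated as part of the fixed mean structure absorbed into the conditioning. Then $\|\hat{\eps}_{I_2}^{\hat{S}}\|_2^2/\sigma^2 = \|Q_{I_2}^{\hat{S}}\eps_{I_2}\|_2^2/\sigma^2$ is central chi-squared with $|I_2|-|\hat{S}|$ degrees of freedom, since $Q_{I_2}^{\hat{S}}$ is an orthogonal projector of that rank and $\eps_{I_2}/\sigma$ is standard Gaussian. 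Independence of numerator and denominator follows because $Z$ is a linear function of $P_{I_2}^{\hat{S}}\eps_{I_2}$ whereas the denominator depends only on $Q_{I_2}^{\hat{S}}\eps_{I_2}$, and the two projections have orthogonal ranges so the Gaussian components are independent.

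Combining the two chi-squared results, the ratio in the theorem is
\[
\frac{\|Z\|_2^2/q}{\bigl(\|Q_{I_2}^{\hat{S}}\eps_{I_2}\|_2^2/\sigma^2\bigr)/(|I_2|-|\hat{S}|)},
\]
which is, by definition, $F_{q,|I_2|-|\hat{S}|}(\lambda_{\mathrm{noncentral}})$ conditionally on $I_1$; since the conditional law does not depend on $I_1$ in distribution (only on $\hat{S}$ and $\bx$, which are fixed in the statement), the unconditional conclusion follows. The main obstacle is the clean handling of the omitted-variable contribution $\bx_{I_2}^{\hat{S}^c}\beta_{\hat{S}^c}^0$: it must appear as the bias term in the numerator (giving the noncentrality) but should not inflate the denominator beyond a central chi-square, which is why the argument hinges on the orthogonal decomposition induced by $P_{I_2}^{\hat{S}}+Q_{I_2}^{\hat{S}}=I_{I_2}$ and on the projection identity $Q_{I_2}^{\hat{S}}\bx_{I_2}^{\hat{S}}=0$.
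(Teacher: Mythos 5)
Your proposal follows essentially the same route as the paper's proof: decompose $\hat{\beta}_{I_2}^{\hat{S}}$ under the full model to get a Gaussian with mean shift $\big((\bx_{I_2}^{\hat{S}})^T\bx_{I_2}^{\hat{S}}\big)^{-1}(\bx_{I_2}^{\hat{S}})^T\bx_{I_2}^{\hat{S}^c}\beta^0_{\hat{S}^c}$ (the paper's Lemma 1), whiten with $A$ to obtain a noncentral $\chi^2_q$ numerator with noncentrality $\|\mbox{BIAS}\|_2^2$, establish independence of $\hat{\beta}_{I_2}^{\hat{S}}$ and $(\hat{\sigma}_{I_2}^{\hat{S}})^2$ via the orthogonality of $P_{I_2}^{\hat{S}}$ and $Q_{I_2}^{\hat{S}}$ (Lemmas 2 and 3), and take the ratio.

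One caution on the step you yourself flag as the main obstacle: the identity $\hat{\eps}_{I_2}^{\hat{S}}=Q_{I_2}^{\hat{S}}Y_{I_2}=Q_{I_2}^{\hat{S}}\eps_{I_2}$ is false in general, since $Q_{I_2}^{\hat{S}}Y_{I_2}=Q_{I_2}^{\hat{S}}\bx_{I_2}^{\hat{S}^c}\beta^0_{\hat{S}^c}+Q_{I_2}^{\hat{S}}\eps_{I_2}$, and the first term vanishes only if the omitted signal lies in the column space of $\bx_{I_2}^{\hat{S}}$. Declaring it ``absorbed into the conditioning'' does not make it go away: if it is nonzero, $\|\hat{\eps}_{I_2}^{\hat{S}}\|_2^2/\sigma^2$ is itself a \emph{noncentral} $\chi^2_{|I_2|-|\hat{S}|}$ and the statistic is a doubly noncentral $F$. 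This is not a defect relative to the paper — the paper's Lemma 4 asserts centrality via a trace computation that implicitly uses $\EE[Y_{I_2}Y_{I_2}^T]=\sigma^2 I$ and thus drops the same $\|Q_{I_2}^{\hat{S}}\bx_{I_2}^{\hat{S}^c}\beta^0_{\hat{S}^c}\|_2^2$ term — so your argument reproduces the published proof faithfully, including its one loose point; but if you want the denominator to be a central chi-square you should state $Q_{I_2}^{\hat{S}}\bx_{I_2}^{\hat{S}^c}\beta^0_{\hat{S}^c}=0$ as an explicit hypothesis rather than wave it off.
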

A proof is given in the Supplemental Material.
Theorem \ref{theo:robust} gives the distribution of the partial F-test
statistic in the general case where a failure of screening is possible. The
noncentrality parameter $\lambda_{\text{noncentral}}$, however, is unknown
in practice. Clearly, if $\hat{S} \supseteq S_0$, then $\beta_{\hat{S}^c}^0
= 0$ and the noncentrality parameter
$\lambda_{\text{noncentral}}=0$. Thus, if $\hat{S}$ is approximately
correct for screening $S_0$, then $\lambda_{\text{noncentral}} \approx 0$. 

In the following example we show that considering the Lasso as screening
procedure and assuming zonal assumptions on the active variables, 
Theorem \ref{theo:robust} implies asymptotically valid p-values when taking
a partial F-test with central F-distribution (i.e, the noncentrality
parameter is asymptotically negligible).

\subsection{The Lasso as selector $\hat{S}$ and zonal assumptions for
  $\beta^0$} 

For the Lasso, assuming that the
compatibility condition holds with compatibility constant 
$\phi^2_0 >0$ \citep[cf.(6.4)]{pbvdg11}, with probability tending to one:
$$\| \hat{\beta} - \beta^0 \|_\infty \le \| \hat{\beta} - \beta^0 \|_1 \le 
a(n,p,s_0,\bx,\sigma) := C\sigma s_0 \sqrt{\log(p)/n}/\phi^2_0$$
for some $C=C(\lambda)>0$ when choosing the regularization parameter
$\lambda \asymp \sigma \sqrt{\log(p)/n}$ \citep[Th6.1]{pbvdg11}. Hence on
an event with high probability, we 
have for this $a=a(n,p,s_0,\bx,\sigma)$, 
$$\hat{S} \supseteq S_{0,large}(a)$$ 
\citep{bueman12} and using the partitioning of $S_0$ it follows that 
$${\|\beta_{\hat{S}^c}^0\|}_\infty \leq u
\mbox{ and } {\|\beta_{\hat{S}^c}^0\|}_0 \leq s_{0,small}(u).$$
Assuming constants $C_1, C_2$ and $C_3$ such that
\begin{eqnarray}\label{add1}
& &\max_{j=1,\dots,p} (\bx_{I_2}^T\bx_{I_2})_{jj} \leq C_1|I_2|\nonumber\\
& &\max_{j,k \in \hat{S}} |\big(\bx_{I_2}^{\hat{S},T}
\bx_{I_2}^{\hat{S}}\big)^{-1}|_{jk} \leq C_2|I_2|^{-1}\nonumber\\
& &\max_{j,k=1,\dots,q} |\big(A \big(\bx_{I_2}^{\hat{S},T}
\bx_{I_2}^{\hat{S}}\big)^{-1} A^T \big)^{-1/2}|_{jk} \leq C_3|I_2|^{1/2}\nonumber\\
& &\mbox{for each } q \times |\hat{S}|\mbox{-matrix } A \mbox{ with } q<|\hat{S}|,~A_{jj}\in \{0,1\} \mbox{ and } A_{jk}=0 \mbox{ for } j \neq k.\nonumber\\ 
\end{eqnarray} 
it follows for the noncentrality parameter
\begin{eqnarray*}
\lambda_{noncentral} &\leq& q \max_{i=1,\dots,q} (\mbox{BIAS})_i^2\\
&\leq& q \big( \frac{1}{\sigma} C_3 |I_2|^{1/2} |\hat{S}| C_2|I_2|^{-1} |\hat{S}|
C_1|I_2| s_{0,small}(u) u \big)^2\\
&\leq& \Big( \frac{C_1C_2C_3}{\sigma} |\hat{S}|^{5/2} |I_2|^{1/2}
s_{0,small}(u) u \Big)^2
\end{eqnarray*}
Now, assuming a more restrictive sparse eigenvalue condition on the design
$\bx$ we have $|\hat{S}| \leq C_4s_0$ for some constant $0<C_4<\infty$
\citep{zhang2008sparsity,geer11} and hence for some constant 
$D=D(C_1,C_2,C_3,C_4)$  
\begin{eqnarray*}
\lambda_{noncentral} &\leq& \big( \frac{D}{\sigma}
 s_0^{5/2}s_{0,small}(u) \sqrt{n}u \big)^2, 
\end{eqnarray*}
i.e. the noncentrality parameter is negligible for $u$ being at most of
small order $o(n^{-1/2})$. Note that the inequality above is implicit in
the value $u$ since it involves $s_{0,small}(u)$: of course, we can give
the upper bound 
\begin{eqnarray*}
\lambda_{noncentral} &\leq& \big( \frac{D}{\sigma}
 s_0^{7/2} \sqrt{n}u \big)^2, 
\end{eqnarray*}
implying that $u = o(s_0^{-7/2}n^{-1/2})$ suffices to obtain asymptotic
negligibility of the noncentrality parameter. 

We conclude as follows. Assume that (\ref{rank}), (\ref{add1}) hold and
that the design matrix satisfies a sparse eigenvalue condition with sparse
eigenvalue bounded away from zero. Furthermore, replace the screening
property in (A2) by zonal assumptions for the regression coefficients:
\begin{eqnarray*}
& &S_0 = S_{0,large}(a) \cup S_{0,small}(u), \mbox{ with}\\
& &a=C\sigma s_0 \sqrt{\log(p)/n}/\phi^2_0 \mbox{ for } C>0 \mbox{
  sufficiently large,}\\
& &u=\tilde{C}\sigma s_0^{-5/2}s^{-1}_{0,small}(u)n^{-1/2} \mbox{ for }
\tilde{C}>0 \mbox{ sufficiently small.}
\end{eqnarray*}
Then, when using the Lasso as selector $\hat{S}$, our hierarchical p-value
method provides asymptotic strong error control of the familywise error
rate. 

\section{Conclusions}
We propose a method for testing whether (mainly) groups of correlated
variables are significant for explaining a response in a high-dimensional
linear model. In presence of highly
correlated variables (or nearly collinear smaller groups of variables), as
is very common in high-dimensional data, it seems indispensable to adopt
such a kind of an approach going beyond multiple testing of individual
regression coefficients. The groups of variables  
are ordered within a given hierarchy, for example a cluster tree, which
allows for powerful multiple testing adjustment. It automatically
determines a good 
resolution level distinguishing between small and large groups of
variables: the former are significant if the signal of one or few
individual variables in such a small group is strong and/or the variables
are not too highly correlated; and a large group can be significant even if 
the signals of (many) individual variables in the group are weak and the 
variables exhibit high correlation among themselves. The minimal true
detections (MTDs) measure the power to detect significant smallest groups
of variables, and our method performs well in terms of MTDs and
substantially better than the analogue of a single variable method. 

Our procedure is based
on repeated sample splitting which was
empirically found to be ``robust'' and reliable for controlling type I
errors. We present some theory proving strong control of the familywise
error rate, and our assumptions allow for scenarios beyond the beta-min
condition saying that all non-zero regression 
coefficients should be sufficiently large. We also provide empirical
results for simulated and real data which complement the theoretical
analysis. 

%\section{Sources of the data}\label{sectionData}
%The Prostate-data are used in \citet{dettling04} and 
%can be downloaded from
%\url{http://stat.ethz.ch/~dettling/bagboost.html}. The Breast-data are used
%in \citet{vV2002} and can be downloaded from 
%\url{http://bioinformatics.nki.nl/data/van-t-Veer_Nature_2002/}. 
%\citet{Conlon03}

\bigskip\noindent
\textbf{Acknowledgments:} We thank Nicolai Meinshausen and Patric
M\"uller for interesting comments and discussions. Furthermore, we thank
some anonymous reviewers for constructive and insightful comments.

\section{Supplemental Materials}
\begin{description}
\item[{\parbox[t]{1\linewidth}{Supplemental Material for ``Hierarchical
    Testing in the\\ High-Dimensional Setting with Correlated
    Variables'':}}]\mbox{}\vspace{5mm}\\
An alternative
bottom-up hierarchical adjustment. Variability of Performance 1 and
Performance 2 in the simulations study. Variability of MTDs in Section
\ref{sectionBigblocks}. Extension of the considerations of Section
\ref{sectionBigblocks} for low SNR. Proofs. (pdf file)
\end{description}

\bibliographystyle{apalike}
\bibliography{referencesHierarchical}

\section*{Supplemental material to Section \ref{sectiondescription}}

%\subsection*{Schematic summary of the method}
%Step 1: CLUSTERING
%$$\left.\begin{array}{cl}
%\{\bx_1,\dots,\bx_p\} &\\
%\downarrow & \hspace{-1.2cm}\mbox{clustering} \\
%\mathcal{T} & \\
%\end{array}\right.$$
%Repeat for $b=1,\dots,B$:\\
%
%Step 2: SCREENING
%$$\left.\begin{array}{cl}
%N=\{1,\dots,n\} &\\
%\downarrow & \hspace{-1.2cm}\mbox{sample split} \\
%N=N_{in}^{(b)} \sqcup
%N_{out}^{(b)} & \\
%\downarrow & \hspace{-1.2cm}\mbox{screening}\\
%\hat{S}^{(b)} & \\
%\end{array}\right.$$
%Step 3: TESTING AND MULTIPLICITY ADJUSTMENT
%$$\hspace{2.5cm}\left.\begin{array}{cl}
%N_{out}^{(b)}\\
%\downarrow & \hspace{-0.6cm}\mbox{testing}\\
%p^{C,(b)} & \\
%\downarrow & \hspace{-0.6cm}\mbox{multiplicity adjustment}\\
%p_{adj}^{C,(b)} & \\
%\end{array}\right.$$
%End of repeating for $b=1,\dots,B$.\\%
%
%Step 4: AGGREGATING AND HIERARCHICAL ADJUSTMENT
%$$\hspace{2.5cm}\left.\begin{array}{rcl}
%& \downarrow & \hspace{-0.6cm}\mbox{aggregating}\\
%& Q^C(\gamma) & \\
%& \swarrow \hspace{0.5cm} \searrow & \mbox{elimination of }\gamma\\
%Q^C(\gamma) & & \hspace{-0.3cm}P^C\\
%\downarrow & & \downarrow\hspace{.1cm}\mbox{hierarchical adjustment}\\
%Q_h^C(\gamma) & & \hspace{-0.3cm}P_h^C
%\end{array}\right.$$

\subsection*{An alternative bottom-up hierarchical adjustment}
The procedure described in Section \ref{sectiondescription} is based on a 
top-down hierarchical adjustment of the p-values 
$P_h^C = \max_{D \in \mathcal{T}:C \subseteq D} P^C$. 
%In this section we present another way to do a multiple testing
%adjustment that allows to control familywise error rate, based on a
%bottom-up hierarchical adjustment.
Another possibility is the following bottom-up approach.

We begin with clustering as in Section \ref{subsec.clustering} and screening
as in Section \ref{sec:Screening}. Then we take the p-values
$p^{C,(b)}$ as in (\ref{eq:pv}) and define
$$\overline{p}^{C,(b)}_h = \min\{\, 2|\hat{S}^{(b)}| \min_{D \in
  \mathcal{T}:D \subseteq
  C} p^{C,(b)}\,,\,1\}.$$
Finally we define for $\gamma \in (0,1)$ the aggregated p-values
$$\overline{Q}_h^C(\gamma) = \min \big\{ \,1~,~q_\gamma \big(
\big\{\overline{p}_h^{C, (b)} / \gamma;\, b=1,\dots,B \big\} \big) \big\}$$
and eliminate $\gamma$ taking
$$\overline{P}_h^C = \min \big\{ \,1~,~(1-\log\gamma_{\min}) \inf_{\gamma \in
  (\gamma_{\min},1)} \overline{Q}_h^C(\gamma) \big\}.$$
The price one has to pay for minimizing among p-values of children clusters
instead of maximizing among p-values of parents clusters is a factor $|C
\cap \hat{S}^{(b)}|$ in the multiplicity adjustment.

Although none of the two methods theoretically dominates the other,
simulations with some scenarios as in Section \ref{sectionempirical} have
shown that the top-down method exhibits substantially higher power than the
bottom-up method. Hence we put our focus on the top-down method.
%????????
%Remark 1:\\
%We note that variable screening might be very difficult in presence of
%correlated variables. For example, if two variables $X_j$ and $X_k$ are
%highly correlated and if $j \in S_0$ and $k \notin S_0$, the Lasso (or
%other sparse estimation methods) might select variable $X_k$ but not $X_j$,
%i.e., $k \in \hat{S}$ (false positive) and $j \notin \hat{S}$ (false
%negative) and thus, $\hat{S} \not \supseteq S_0$. Such a failure of
%variable screening, however, is not severly affecting the validity of our
%method, see Remark 2 below. 
%
%Remark 2:\\
%In case where we have the false positive and false negative selections from
%Remark 1, we should still be able to get reasonable performance. 
%????

\section*{Supplemental material to Section \ref{sectionempirical}}

\subsection*{Variability of Performance 1 and Performance 2 in the
  simulation study}

To give some idea about the variability among the different simulation
runs, we show in Figures \ref{figurePerformance1} and
\ref{figurePerformance2} the Performance 1 and Performance 2 measures,
respectively for all 100 runs of some of the scenarios.

\begin{figure*}[!htp]
\centerline{
\includegraphics[width=0.8\textwidth, angle=0]{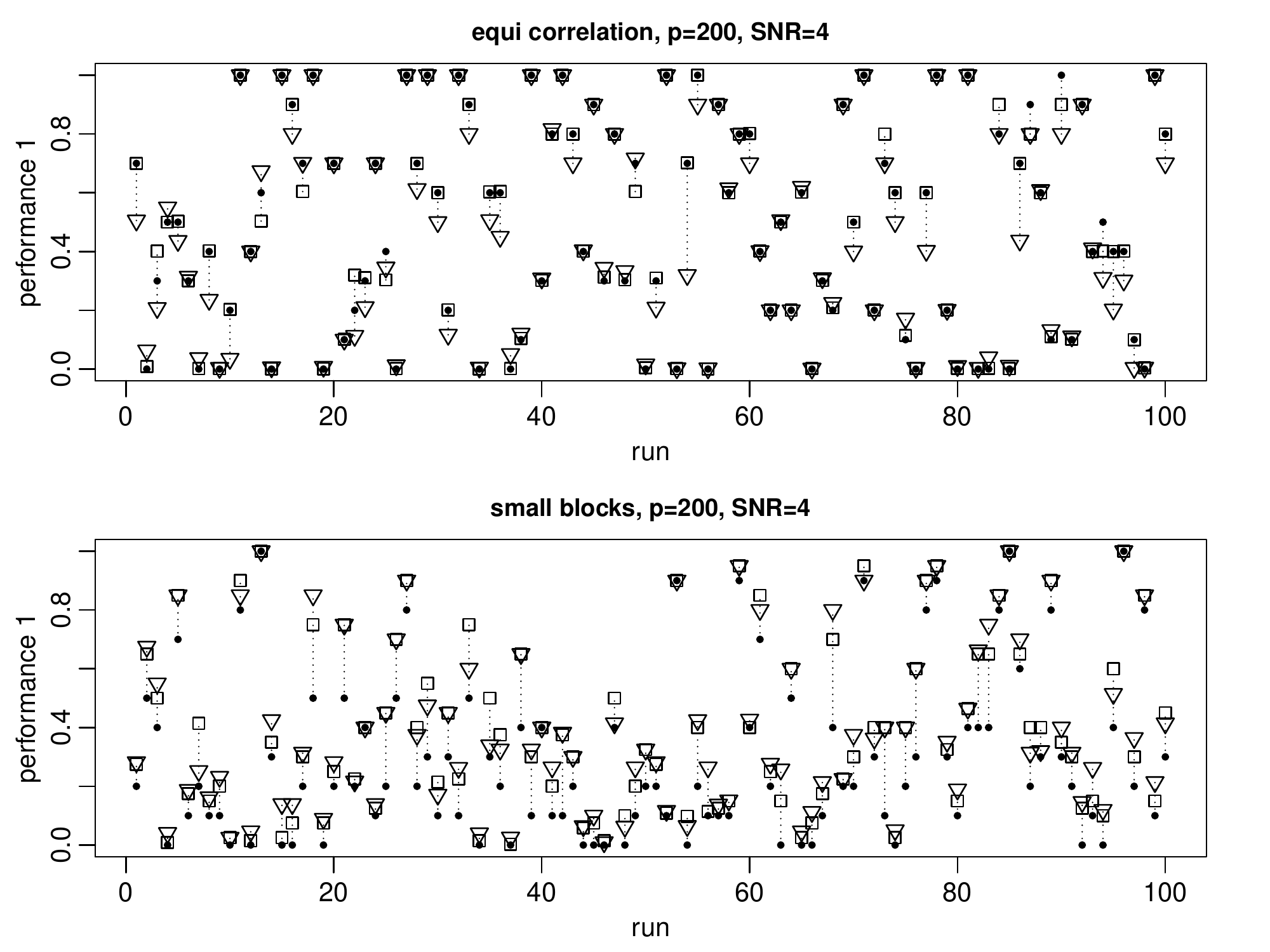}}
\caption{The Performance 1 measure for all 100 runs for 2 different
  scenarios described in the header of the plots. Single variable method
  (filled small circle), the hierarchical method with canonical correlation 
  clustering (empty square) and \texttt{hclust} clustering (triangle).}
\label{figurePerformance1}
\end{figure*}

In Figure \ref{figurePerformance1} we consider Performance 1 for two
synthetic scenarios, one where the single variable method is favored and
another where the hierarchical method is better. In Figure
\ref{figurePerformance2} we adopt the same approach for Performance 2
considering two scenarios based on semi-real datasets. 
%\Jacopo{Should we
%  comment on the plots? How? Personally I don't feel there is something
%  which needs to be pointed out in the text and an interested reader would
%  rather give an attentive look at the graph then reading a
%  ``description''. Same holds for my similar comments below}
%PB: Ist OK so.

\begin{figure*}[!htp]
\centerline{
\includegraphics[width=0.8\textwidth, angle=0]{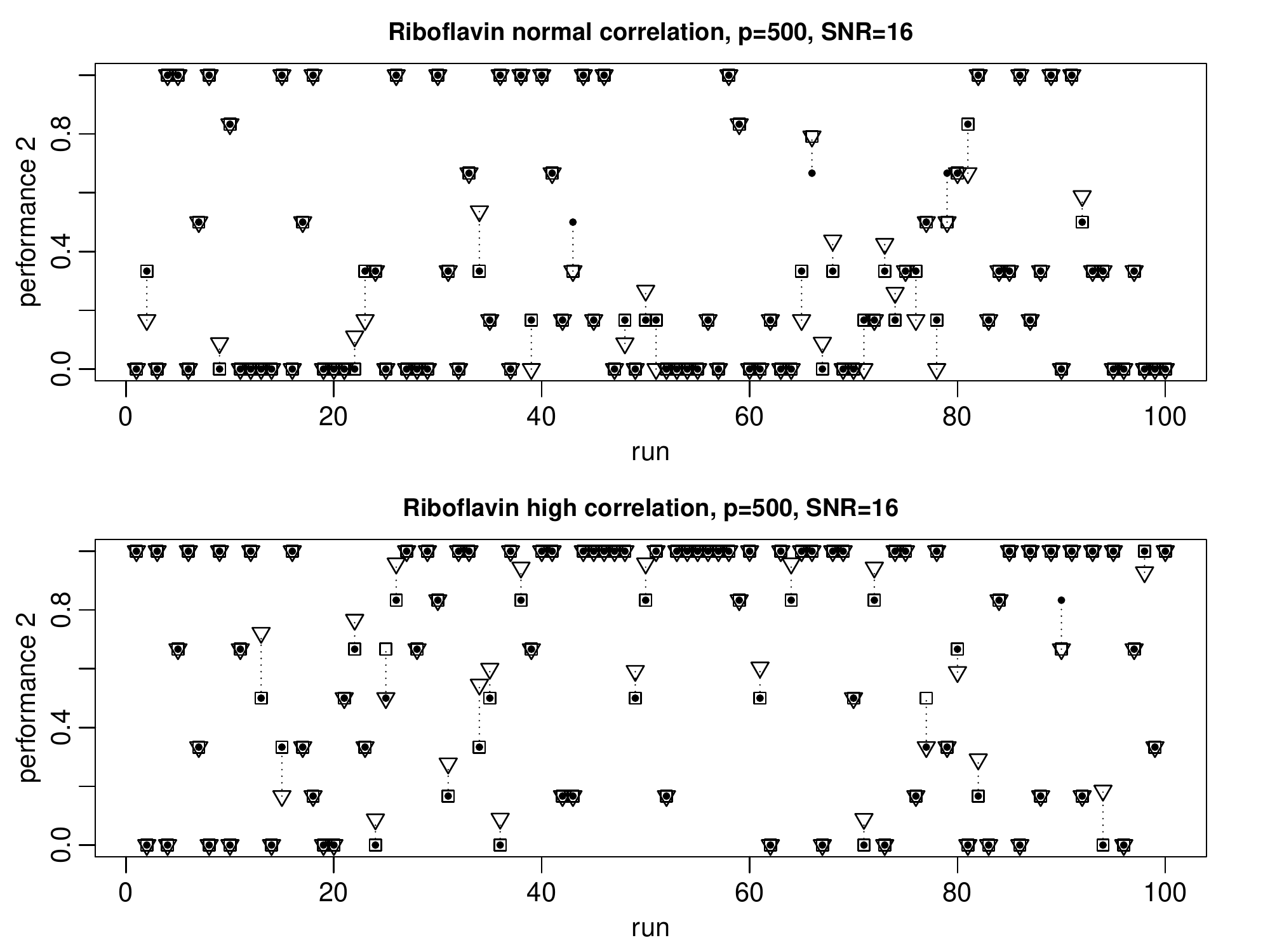}}
\caption{The Performance 2 measure for all 100 runs for 2 different
  scenarios described in the header of the plots. Single variable method
  (filled small circle), the hierarchical method with canonical correlation 
  clustering (empty square) and \texttt{hclus} clustering (triangle).}
\label{figurePerformance2}
\end{figure*}

\subsection*{Variability of MTDs in Section \ref{sectionBigblocks}}

We show in Figures \ref{figuresmallblockshighSNR2} and
  \ref{figurebigblockshighSNR2} the number of MTDs for all 
simulation runs of the ``small blocks''-design with $\mbox{SNR}=8$ and
$\rho=0.7$ and $\rho=0.95$, respectively, and for the ``large
blocks''-design with $\mbox{SNR}=8$ and $\rho=0.4$ and $\rho=0.9$,
respectively. For each of the 100 simulation runs and cardinalities from 1
to 20,
the number of MTDs for the hierarchical method with \texttt{hclus}
clustering is depicted in black while the number of MTDs for the single 
variable method is depicted in gray, for graphical convenience at the
bottom of the y-axis (since the cardinality of the MTDs of the single
variable method is always equal to 1). 
%underneath 
%the number of singletons of the hierarchical method instead of on the 
%horizontal line that represents cardinality 1.}

\begin{sidewaysfigure}[!htp]
\centerline{\includegraphics[width=1\textwidth, angle=0]{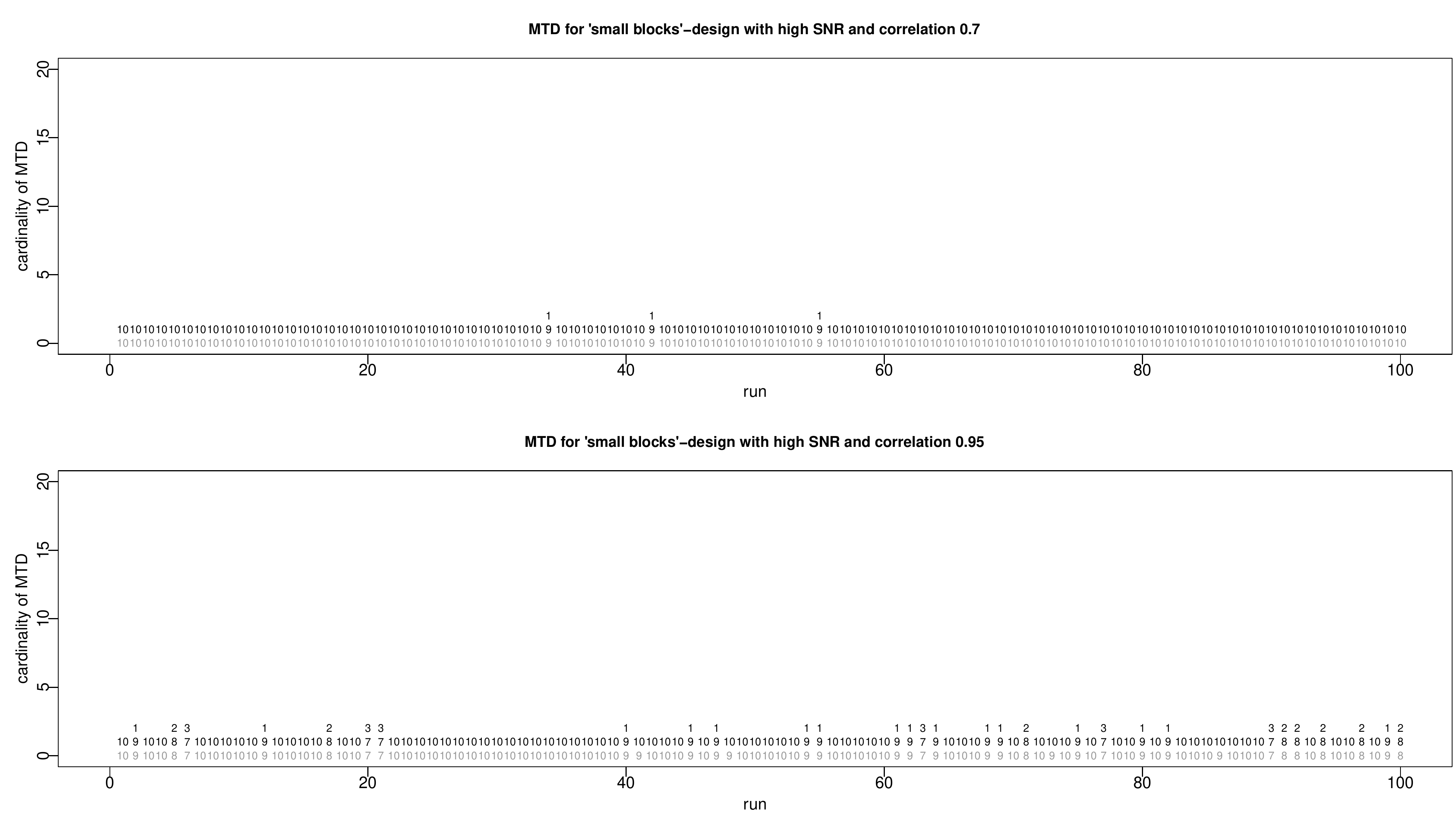}}
\caption{Number of MTDs for ``small blocks''-design with high SNR (SNR=8)
  and $\rho=0.7$ resp.
  $\rho=0.95$. For each of the 100 simulation runs (x-axis) and every
  cardinality 
  (y-axis), the number of MTDs for the hierarchical method with
  \texttt{hclus} clustering (in black) and for the single variable method
  (in gray, 
  for graphical convenience at the bottom of the y-axis).}
\label{figuresmallblockshighSNR2}
\end{sidewaysfigure}

\begin{sidewaysfigure}[!htp]
\centerline{\includegraphics[width=1\textwidth, angle=0]{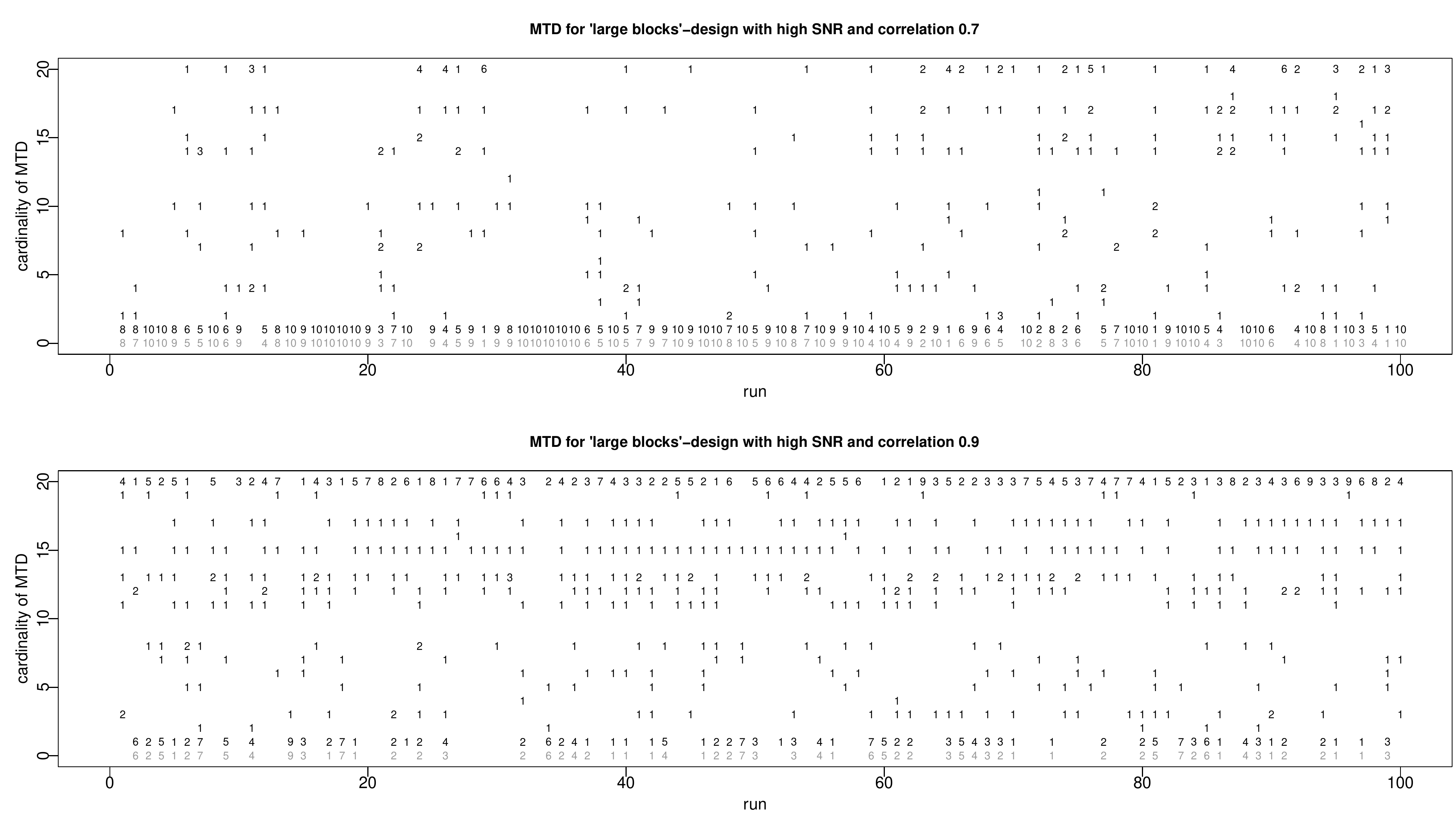}}
\caption{Number of MTDs for ``large blocks''-design with high SNR (SNR=8) and
  $\rho=0.7$ resp.
  $\rho=0.9$. For each of the 100 simulation runs (x-axis) and every cardinality
  (y-axis), the number of MTDs for the hierarchical method with
  \texttt{hclus} clustering (in black) and for the single variable method
  (in gray, for graphical convenience at the bottom of the y-axis).}
\label{figurebigblockshighSNR2}
\end{sidewaysfigure}

\subsection*{Extension of the considerations of Section
  \ref{sectionBigblocks} for low \mbox{SNR}}

We present here the same detailed analysis as in Section
\ref{sectionBigblocks} for the signal to noise ratio $\mbox{SNR}=4$. The
empirical results presented below show that the power of all considered
methods is significantly affected by the change of 
$\mbox{SNR}$ (e.g. for the ``large blocks''-design with $\rho \geq 0.7$
detecting at least one singleton is difficult when $\mbox{SNR}=4$), but
they also confirm the superiority of the hierarchical in comparison to the
single variable methods reported in the main paper in Section
\ref{sectionBigblocks}. 

Table \ref{tableSNRsmall} reports some average results over 100 simulation
runs. As for the case in the main
paper with high SNR, the number of singleton detections are again similar
for all methods. The large number of MTDs with cardinality 2 in the ``small
blocks''-design emphasizes the powerful advantage of automatically going to
the finer possible resolution with the hierarchical method. 

\begin{sidewaystable}[!htp]
%\begin{table}[!h]
%\centering
%\begin{rotate}{90}
\begin{tabular}{|c|c||c|c|c||c|c|c||c|c|c||c|c||c|c||c|c|}
\hline
 & & \multicolumn{3}{c||}{FWER} &
\multicolumn{3}{c||}{\# MTD} &
\multicolumn{9}{c|}{\# MTD for given cardinality}\\
\cline{9-17}
$\rho$ & $\delta$ & \multicolumn{3}{c||}{} & \multicolumn{3}{c||}{} & \multicolumn{3}{c||}{$|\cdot|=1$} &
 \multicolumn{2}{c||}{$|\cdot|=2$} &
 \multicolumn{2}{c||}{$3 \leq |\cdot| \leq 10$} & 
 \multicolumn{2}{c|}{$11 \leq |\cdot| \leq 20$}
\\
\cline{3-17}
 & & S & C & H & S & C & H & S & C & H & C & H & C & H & C &
 H\\
\hline\hline
\multicolumn{17}{|c|}{``small blocks''-design with low SNR}\\
0 & 0.28 & 0 & 0 & 0 & 8.78 & 8.85 & 8.79 & 8.78 & 8.72 & 8.49 & 0 & 0.03 & 0 & 0.09
& 0 & 0.07\\
\hline
0.4 & 0.18 & 0 & 0 & 0 & 8.74 & 9.11 & 8.82 & 8.74 & 8.83 & 8.47 & 0.26 &
0.05 & 0.02 & 0.08 & 0 & 0.01\\
\hline
0.7 & 0.45 & 0 & 0 & 0 & 4.80 & 6.89 & 7.02 & 4.80 & 5.26 & 4.86 & 1.41 & 1.21 &
0 & 0.56 & 0.19 & 0\\
\hline
0.8 & 0.49 & 0 & 0 & 0 & 4.74 & 7.13 & 7.41 & 4.74 & 4.95 & 4.78 & 1.99 &
2.00 & 0.02 & 0.55 & 0.16 & 0.04\\
\hline
0.85 & 0.38 & 0.03 & 0.03 & 0.03 & 6.03 & 7.84 & 8.00 & 6.03 & 6.39 & 6.10
& 1.41 & 1.70 & 0.04 & 0.20 & 0 & 0\\
\hline
0.9 & 0.53 & 0.05 & 0.05 & 0.07 & 4.31 & 7.07 & 7.47 & 4.31 & 4.63 & 4.67 &
2.22 & 2.23 & 0.06 & 0.56 & 0.16 & 0\\
\hline
0.95 & 0.98 & 0.33 & 0.36 & 0.42 & 1.29 & 3.83 & 4.82 & 1.29 & 1.48 & 1.28
& 1.95 & 2.02 & 0 & 1.08 & 0.26 & 0\\ 
\hline
0.99 & 0.94 & 0.46 & 0.60 & 0.54 & 3.96 & 6.47 & 6.66 & 3.96 & 4.06 & 3.63 &
2.26 & 2.28 & 0.15 & 0.39 & 0 & 0.13\\
\hline
\hline
\multicolumn{17}{|c|}{``large blocks''-design with low SNR}\\
0 & 0.24 & 0 & 0 & 0 & 8.18 & 8.20 & 8.23 & 8.18 & 8.03 & 7.86 & 0 & 0.18 &
0 & 0.25 & 0 & 0.09\\
\hline
0.4 & 0.35 & 0 & 0 & 0 & 4.51 & 5.06 & 6.90 & 4.51 & 4.53 & 4.26 & 0.01 &
0.14 & 0.06 & 0.46 & 0.24 & 1.31\\
\hline
0.7 & 0.76 & 0 & 0 & 0 & 0.28 & 2.84 & 4.69 & 0.28 & 0.30 & 0.27 & 0 &
0.06 & 0.11 & 0.26 & 1.88 & 2.36\\
\hline
0.8 & 0.72 & 0 & 0 & 0 & 0.58 & 4.93 & 6.48 & 0.58 & 0.63 & 0.60 & 0 &
0.07 & 0.35 & 0.66 & 3.67 & 3.83\\
\hline
0.85 & 0.82 & 0 & 0 & 0.01 & 0.48 & 5.53 & 6.42 & 0.48 & 0.48 & 0.46 & 0 & 0.04 &
0.44 & 0.54 & 4.40 & 4.30\\
\hline
0.9 & 0.99 & 0 & 0 & 0 & 0.08 & 3.6 & 4.61 & 0.08 & 0.10 & 0.08 & 0 &
0 & 0.16 & 0.17 & 2.77 & 2.77\\
\hline
0.95 & 1.00 & 0 & 0.23 & 0.15 & 0.10 & 6.98 & 7.25 & 0.10 & 0.10 & 0.10 & 0.01 &
0.03 & 0.27 & 0.87 & 6.40 & 5.86\\ 
\hline
0.99 & 1.00 & 0.93 & 1.00 & 1.00 & 1.60 & 5.00 & 5.10 & 1.60 & 1.60 & 1.60 &
0 & 0.24 & 1.49 & 2.21 & 1.91 & 1.05\\
\hline 
\end{tabular}
\caption{Results of the simulation with the ``large blocks''- and ``small
  blocks'' -design with low SNR (SNR=4) for different correlations. 
$\rho$ is the correlation in the design, $\delta$
the relative frequency of screenings with $\hat{S} \not\supset S_0$, MTD
denotes ``minimal true detections'', ``$2 \leq |\cdot| \leq 5$'' indicates
that MTD of cardinality between 2 and 5 are considered, S, C and H
represent the ``single variable'' resp. ''canonical correlation
clustering`` and ``hierarchical with \texttt{hclus} clustering'' method.}
\label{tableSNRsmall}
\end{sidewaystable}

To better illustrate what happens in a typical simulation run, we show in Figure
\ref{figurebigblockslowSNR1} the
dendrograms for a representative
simulation run of the ``large blocks''-design with $\rho=0.85$ (here with
$\mbox{SNR}=4$), for the single variable method and the hierarchical method
with \texttt{hclus} clustering. The active variables are labeled in black
and the truly detected non-zero variables
along the hierarchy are depicted in black. While the single 
variable method ``only'' detects one singleton, the hierarchical method
detects the same singleton and achieves 8 more MTDs.  

\begin{figure*}[!htp]
\centerline{\includegraphics[width=1.1\textwidth, angle=0]{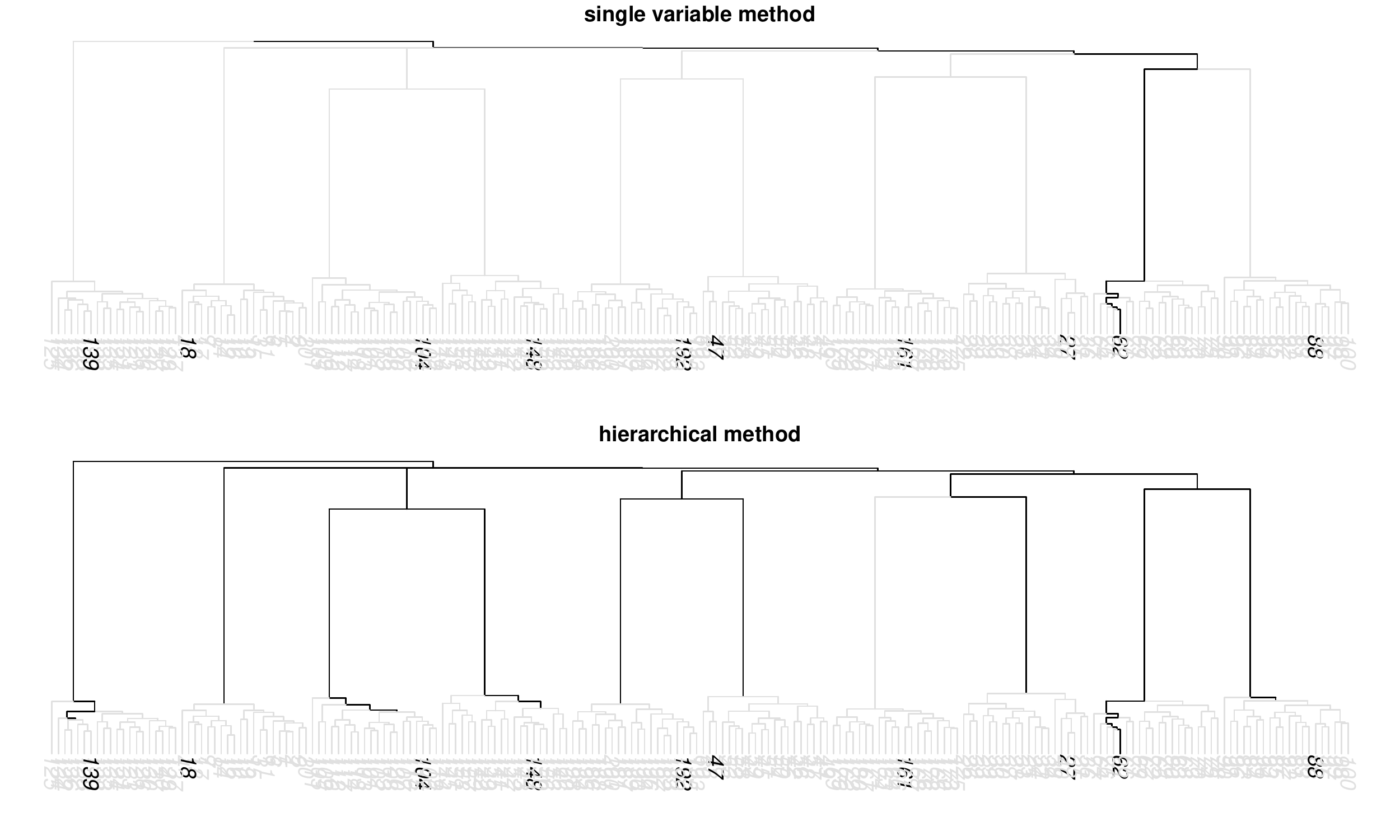}}
\vspace{-0.5cm}
\caption{Dendrograms for a representative run of the ``large
  blocks''-design with low SNR (SNR=4) and $\rho=0.85$. The active
  variables are labeled in black and the truly detected
  non-zero variables along the hierarchy are depicted in black.}
\label{figurebigblockslowSNR1}
\end{figure*}

Figure \ref{figuresmallblockslowSNR1} is the analogous of Figure
\ref{figurebigblockslowSNR1} in the main paper for a typical run of the
``small blocks''-design with 
$\rho=0.8$. It shows that the hierarchical method improves the results of
the single variable method (which detects 5 singletons) providing 3 more MTDs of
cardinality 2.

\begin{figure*}[!htp]
\centerline{\includegraphics[width=1.1\textwidth, angle=0]{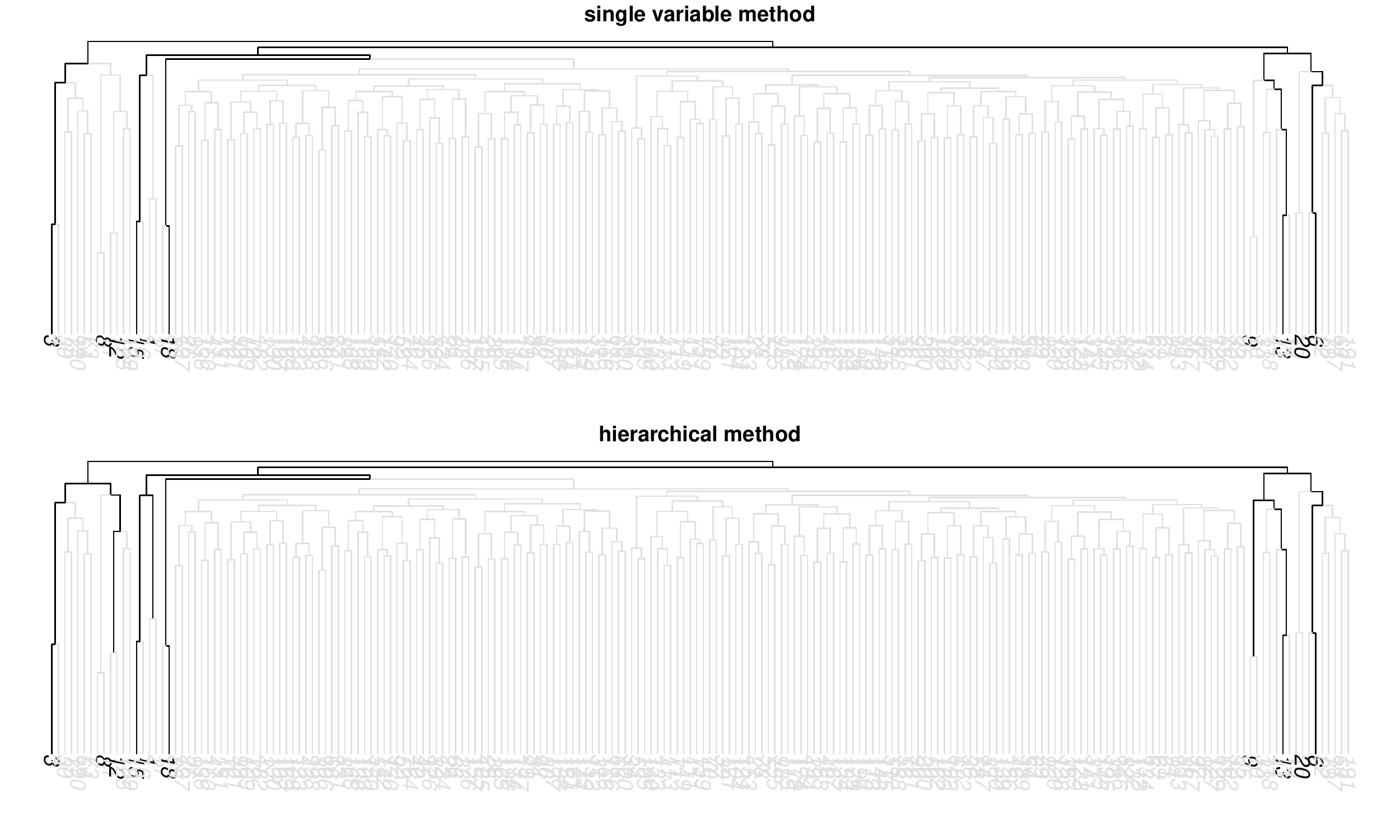}}
\vspace{-0.5cm}
\caption{Dendrograms for a representative run of the ``small
  blocks''-design with low SNR (SNR=4) and $\rho=0.80$. The active
  variables are labeled in black and the truly detected 
non-zero variables along the hierarchy are depicted in black.}
\label{figuresmallblockslowSNR1}
\end{figure*}

In Figures \ref{figuresmallblockslowSNR2} and \ref{figurebigblockslowSNR2} and we show the number of MTDs for all
100 simulation runs of the ``small blocks''-design with $\mbox{SNR}=4$ and
$\rho=0.7$ and $\rho=0.9$, respectively. and for the ``large
blocks''-design with $\mbox{SNR}=4$ and $\rho=0.4$ and $\rho=0.9$,
respectively. For each simulation run and cardinalities from 1 to 20,
the number of MTDs for the hierarchical method with \texttt{hclus}
clustering is depicted in black while the number of MTDs for the single 
variable method is depicted in gray, for graphical convenience at the
bottom of the y-axis (since the cardinality of MTDs of the single variable method
is always equal to 1). 
%\Jacopo{Should we comment on the plots? How?}
%PB: Its OK so. 

\begin{sidewaysfigure}[!htp]
\centerline{\includegraphics[width=1\textwidth, angle=0]{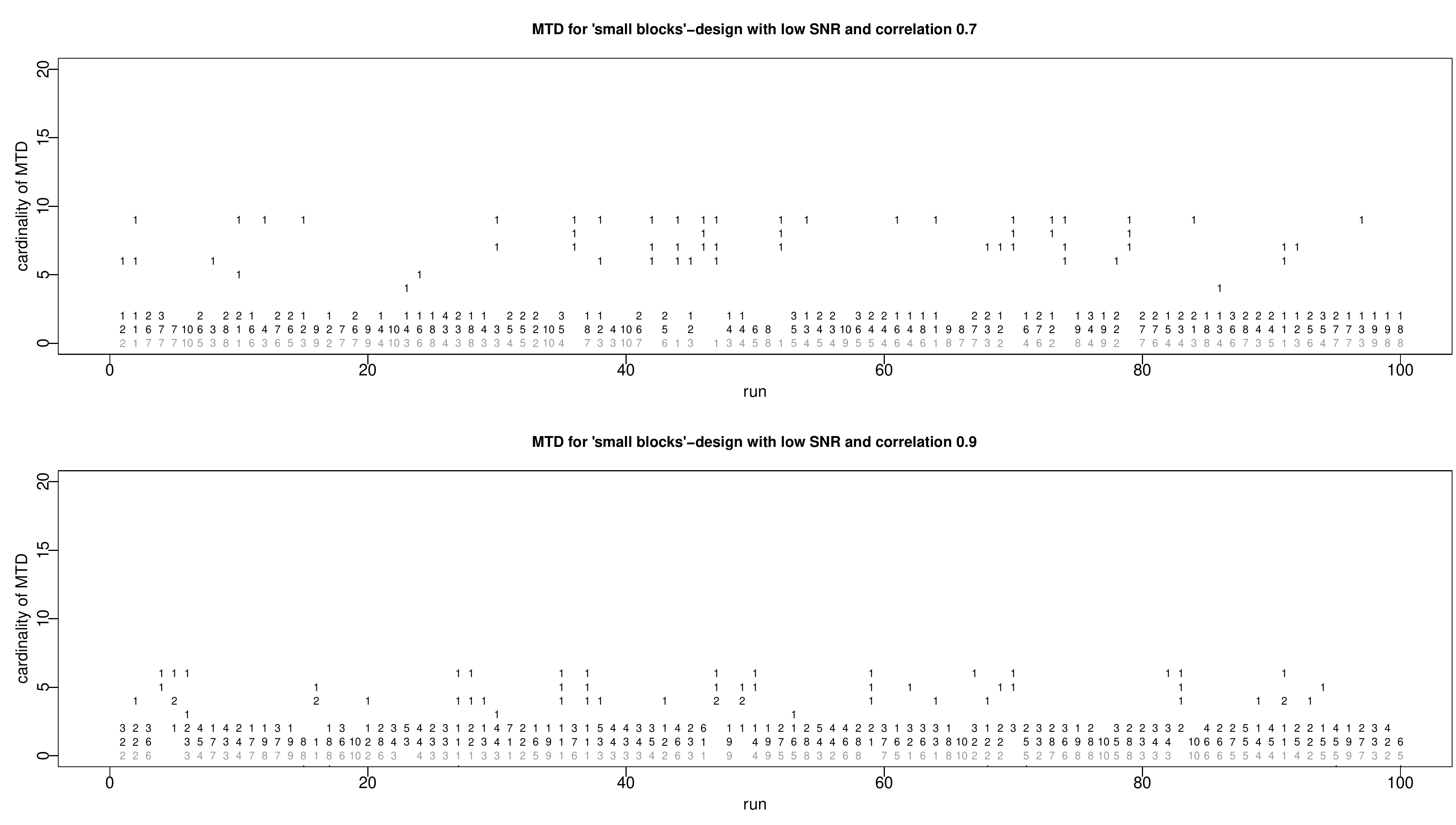}}
\caption{Number of MTDs for ``small blocks''-design with high SNR (SNR=4)
  and $\rho=0.7$ resp.
  $\rho=0.9$. For each of the 100 simulation runs (x-axis) and every cardinality
  (y-axis), the number of MTDs for the hierarchical method with
  \texttt{hclus} clustering (in black) and for the single variable method
  (in gray, for graphical convenience at the bottom of the y-axis).}
\label{figuresmallblockslowSNR2}
\end{sidewaysfigure}

\begin{sidewaysfigure}[!htp]
\centerline{\includegraphics[width=1\textwidth, angle=0]{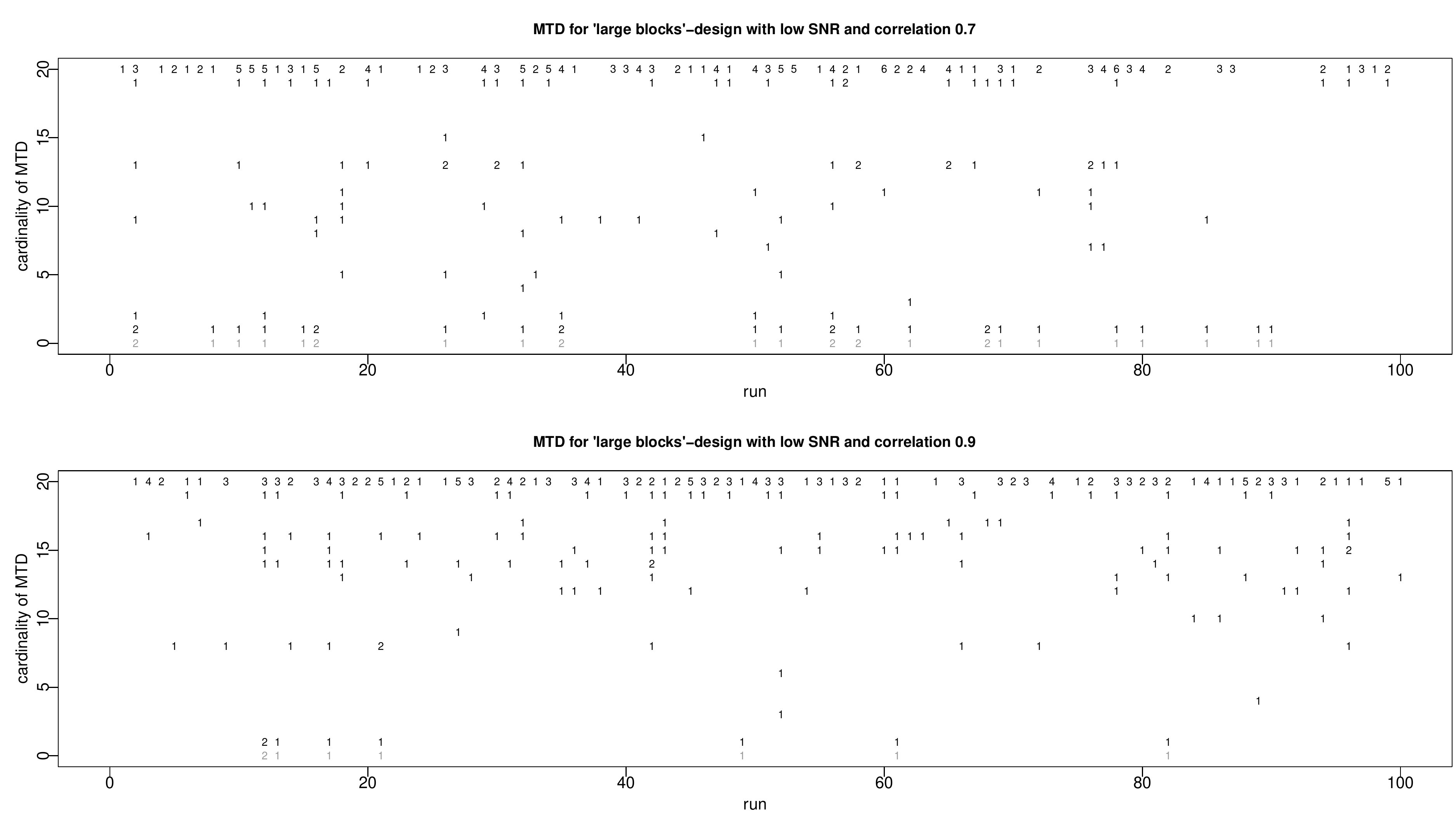}}
\caption{Number of MTDs for ``large blocks''-design with high SNR (SNR=4) and
  $\rho=0.7$ resp.
  $\rho=0.9$. For each of the 100 simulation runs (x-axis) and every cardinality
  (y-axis), the number of MTDs for the hierarchical method with
  \texttt{hclus} clustering (in black) and for the single variable method
  (in gray, for graphical convenience at the bottom of the y-axis).}
\label{figurebigblockslowSNR2}
\end{sidewaysfigure}

Finally, we illustrate in Figure \ref{figureTPRFPRlowSNR} the true
  positive (TPR) rates and false positive rates (FPR) of the Lasso, the
  single variable and the hierarchical method with \texttt{hclus}
  clustering as points in the ROC space.

\begin{figure*}[!htp]
\centerline{\includegraphics[width=0.95\textwidth, angle=0]{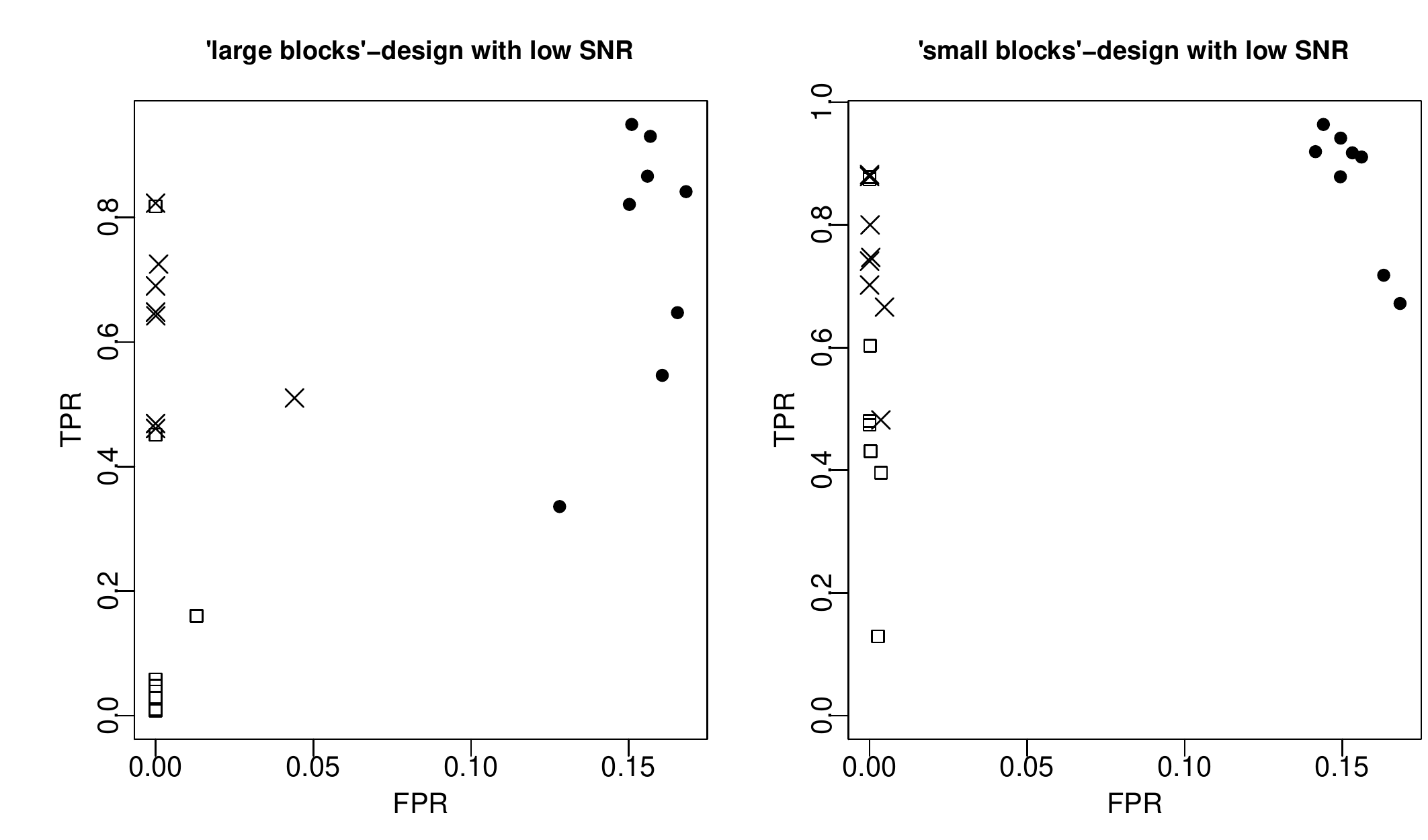}}
\vspace{-0.5cm}
\caption{True positive rate (TPR) and false positive rate (FPR) for the Lasso
  (bullet), the single variable method (box), and the hierarchical method with
  \texttt{hclust} clustering (cross) for different scenarios as indicated
  in the header of the plots.}
\label{figureTPRFPRlowSNR}
\end{figure*}

Comparing Figure \ref{figureTPRFPRlowSNR} with Figure
  \ref{figureTPRFPRhighSNR} in the main paper, we see that the negative
  impact of low SNR is more striking on the TPR then on the FPR which
  remains very similar. Regarding a comparison of the methods, the same
  conclusions as for high $\mbox{SNR}=8$ can be drawn: the single variable
  and hierarchical 
  method do much better than the Lasso in terms of FPR. The price one
  has to pay for the higher reliability is a lower TPR and the hierarchical
method improves the TPR of the single variable method to the level of the
Lasso (when considering MTDs). 

\section*{Proofs}

\subsection*{Proof of Theorem \ref{theo1}}
Our proof is following ideas from the proofs of Theorem 3.1-3.2 in
\citet{memepb09} and the proof Theorem 1 in \citet{Meins08}. 

\bigskip\noindent
\textbf{Proof of first assertion of Theorem \ref{theo1}.}\\
First note that   
\begin{eqnarray*}
\PP(\mathcal{T}^\gamma_{rej} \cap \mathcal{T}_0 \neq \emptyset) &=&
\PP(\exists C \in \mathcal{T}_0 \,:\, Q^C_h(\gamma) \leq \alpha)\\
&=& \PP(\exists C \in \tilde{\mathcal{T}}_0 \,:\, Q^C_h(\gamma) \leq \alpha)
\end{eqnarray*}
where $\tilde{\mathcal{T}}_0$ is the set of all clusters which fulfill the
null hypothesis and are maximal in the sense that
$$\tilde{\mathcal{T}}_0 := \{ C \in \mathcal{T}_0 \,:\, \nexists D \in
\mathcal{T}_0  \mbox{ with } C \subset D \}.$$
This holds, since a direct consequence of the definition of the
hierarchically adjusted p-values $Q^C_h(\cdot)$ is that $Q^{C'}_h(\gamma) \leq
Q^{C}_h(\gamma) \mbox{ for } C \subset C'$ and hence an error committed on a
cluster $C \in \mathcal{T}_0 \backslash \tilde{\mathcal{T}}_0$ implies an
error in a set $C' \in \tilde{\mathcal{T}}_0$, with $C \subset C'$. Moreover,
since $Q^C_h(\gamma) \geq Q^C(\gamma)$,
$$\PP(\exists C \in \tilde{\mathcal{T}}_0 \,:\, Q^C_h(\gamma) \leq \alpha)
\leq \PP(\exists C \in \tilde{\mathcal{T}}_0 \,:\, Q^C(\gamma) \leq
\alpha) = \PP(\min_{C \in \tilde{\mathcal{T}}_0} Q^C(\gamma) \leq \alpha).$$
Hence it remains to show that 
$$\PP(\min_{C \in \tilde{\mathcal{T}}_0} Q^C(\gamma) \leq \alpha) \leq
\alpha.$$
%((((WE DONT NEED THIS!!Analogously to REFMeinsMeiePeter, we define
%$$K^{C,(b)}:=p^{C \cap \hat{S}^{(b)}}_{adj} 1 \{S \subseteq \hat{S}^{(b)}
%\} + 1 \{S \not\subseteq \hat{S}^{(b)} \},$$
%where $K^{C,(b)}$ are the adjusted p-values if the estimated active set
%contains the true set. Otherwise, all p-values are set to 1.)))) 
We consider the event $$\mathcal{A}=\{\,\hat{S}^{(b)} \supseteq S_0, \forall\,b=1
\dots B \,\}$$ where all screenings are satisfied. Because of the
$\delta$-screening assumption it holds $$\\P(\mathcal{A}) \geq (1-\delta)^B.$$
In the following we omit the function $\min \{1,\cdot\}$ from the definition of
$Q^C(\gamma)$ in order to simplify the notation (this is possible since the
level $\alpha$ is smaller than 1). Define for $u \in (0,1)$ the function 
$$\pi^C(u):= \frac{1}{B} \sum_{b=1}^B 1 \{ p^{C,(b)}_{adj}
\leq u \}.$$ Then it holds 
\begin{eqnarray*}
Q^C(\gamma) \leq \alpha &\Longleftrightarrow& q_\gamma(\{ p^{C,(b)}_{adj} / \gamma;\,b=1,\dots,B\}) \leq \alpha\\
&\Longleftrightarrow& q_\gamma(\{ p^{C,(b)}_{adj};\,b=1,\dots,B\}) \leq \alpha\gamma\\
&\Longleftrightarrow& \sum_{b=1}^B 1 \{ p^{C,(b)}_{adj} \leq \alpha\gamma \} \geq B\gamma\\
&\Longleftrightarrow& \pi^C(\alpha\gamma) \geq \gamma.
\end{eqnarray*}
Thus
\begin{eqnarray*}
\PP(\min_{C \in \tilde{\mathcal{T}}_0} Q^C(\gamma) \leq \alpha) &\leq&
\sum_{C \in \tilde{\mathcal{T}}_0} \EE ( 1\{ Q^C(\gamma) \leq \alpha \} )\\
&=& \sum_{C \in \tilde{\mathcal{T}}_0} \EE ( 1\{ \pi^C(\alpha\gamma) \geq
\gamma \} )\\
&\leq& \frac{1}{\gamma} \sum_{C \in \tilde{\mathcal{T}}_0} \EE (
\pi^C(\alpha\gamma) ),
\end{eqnarray*}
where for the last inequality a Markov inequality was used. Now, using the
definition of $\pi^C(\cdot)$,
\begin{eqnarray*}
\frac{1}{\gamma} \sum_{C \in \tilde{\mathcal{T}}_0} \EE (
\pi^C(\alpha\gamma) ) &=& \frac{1}{\gamma} \sum_{C \in
  \tilde{\mathcal{T}}_0} \EE ( \frac{1}{B} \sum_{b=1}^B 1 \{ p^{C,(b)}_{adj} \leq \alpha\gamma \} )\\
&=& \frac{1}{\gamma}\frac{1}{B} \sum_{b=1}^B \sum_{C \in
  \tilde{\mathcal{T}}_0} \EE ( 1 \{ p^{C,(b)}_{adj} \leq \alpha\gamma \} )\\
&=& \frac{1}{\gamma}\frac{1}{B} \sum_{b=1}^B \sum_{C \in
  \tilde{\mathcal{T}}_0,\, \hat{S}^{(b)} \cap C \neq \emptyset} 
\EE ( 1 \{ p^{C,(b)}_{adj} \leq \alpha\gamma \} )
\end{eqnarray*}
where the last equality holds since $p^{C,(b)}_{adj} = 1$
if $\hat{S}^{(b)} \cap C = \emptyset$.
Now, for $C$ such that $\hat{S}^{(b)} \cap C \neq \emptyset$ and on $\mathcal{A}$
\begin{eqnarray*}
\EE ( 1 \{ p^{C,(b)}_{adj} \leq \alpha\gamma \} ) &=&
\PP ( p^{C,(b)}_{adj} \leq \alpha\gamma )\\
&=& \PP \Big(p^{C,(b)} \frac{|\hat{S}^{(b)}|}{|C \cap
  \hat{S}^{(b)}|} \leq \alpha\gamma \Big)\\
&=& \PP \Big(p^{C,(b)} \leq \alpha\gamma\frac{|C \cap
  \hat{S}^{(b)}|}{|\hat{S}^{(b)}|} \Big)\\
&\leq& \alpha\gamma\frac{|C \cap \hat{S}^{(b)}|}{|\hat{S}^{(b)}|}.
\end{eqnarray*}
This is a consequence of the uniform distribution of the p-values $p^{C
  \cap \hat{S}^{(b)}}$ given $S \subseteq \hat{S}^{(b)}$ and the sample
split $\{1,\dots,n \} = N_{in}^{(b)} \sqcup N_{out}^{(b)}$. We can hence
conclude that on $\mathcal{A}$
\begin{eqnarray*}
\PP(\min_{C \in \tilde{\mathcal{T}}_0} Q^C(\gamma) \leq \alpha) &\leq&
\frac{1}{\gamma}\frac{1}{B} \sum_{b=1}^B \sum_{C \in
  \tilde{\mathcal{T}}_0,\, \hat{S}^{(b)} \cap C \neq \emptyset}
\alpha\gamma\frac{|C \cap \hat{S}^{(b)}|}{|\hat{S}^{(b)}|}\\
&=& \alpha \frac{1}{B} \sum_{b=1}^B \frac{1}{|\hat{S}^{(b)}|}\sum_{C \in
  \tilde{\mathcal{T}}_0,\, \hat{S}^{(b)} \cap C \neq \emptyset}
|C \cap \hat{S}^{(b)}|\\
&\leq& \alpha \frac{1}{B} \sum_{b=1}^B 1 \leq \alpha,
\end{eqnarray*}
since by definition the sets in $\tilde{\mathcal{T}}_0$ are disjoint and
hence $$\sum_{C \in
  \tilde{\mathcal{T}}_0,\, \hat{S}^{(b)} \cap C \neq \emptyset}
|C \cap \hat{S}^{(b)}| \leq |\hat{S}^{(b)}|.$$
Finally we have 
\begin{eqnarray*}
&&\PP(\min_{C \in \tilde{\mathcal{T}}_0} Q^C(\gamma) \leq \alpha) =\\
&=&\PP(\min_{C \in \tilde{\mathcal{T}}_0} Q^C(\gamma) \leq \alpha \,|\,
\mathcal{A})\,P(\mathcal{A}) + \PP(\min_{C \in \tilde{\mathcal{T}}_0}
Q^C(\gamma) \leq \alpha \,|\,\mathcal{A}^c)\,P(\mathcal{A}^c)\\
&\leq& \alpha + 1-(1-\delta)^B
\end{eqnarray*}

\bigskip\noindent
\textbf{Proof of second assertion of Theorem \ref{theo1}.}\\
We show that 
$$\PP(\exists C \in \mathcal{T}_0\,:\,P^C_h \leq \alpha) \leq \alpha.$$
Defining $\tilde{\mathcal{T}}_0$ as in the proof of Theorem \ref{theo1}
and using similar arguments as there we obtain
$$\PP(\exists C \in \mathcal{T}_0\,:\,P^C_h \leq \alpha) = \PP(\exists C \in
\tilde{\mathcal{T}}_0\,:\,P^C_h \leq \alpha) \leq \PP(\exists C \in
\tilde{\mathcal{T}}_0\,:\,P^C \leq \alpha).$$
As in the proof of Theorem \ref{theo1} we consider the
event $$\mathcal{A}=\{\,\hat{S}^b \supseteq S_0, \forall\,b=1 \dots B
\,\}$$ with $P(\mathcal{A}) \geq (1-\delta)^B$.
The uniform distribution of the p-values 
$p_{\mathrm{partial \: F-test}}^{C \cap \hat{S}^{(b)}}$ given 
$S \subseteq \hat{S}^{(b)}$ and the sample split $\{1,\dots,n \} =
N_{in}^{(b)} \sqcup N_{out}^{(b)}$, together with the fact that sets in
$\hat{S}^{(b)}$ are disjoint, provides on $\mathcal{A}$
$$\EE \Big( \frac{1 \{ p^{C,(b)} \leq \alpha \gamma
  \}}{\gamma} \Big) = \frac{1}{\gamma} \PP ( p^{C,(b)}
\leq \alpha \gamma ) \leq \alpha.$$
Moreover, on $\mathcal{A}$
\begin{eqnarray*}
\EE \Big( \max_{C \in \tilde{\mathcal{T}}_0} \frac{1 \{ p^{C,(b)}_{adj} \leq \alpha \gamma \}}{\gamma} \Big) &\leq& \EE \Big(
 \sum_{C \in \tilde{\mathcal{T}}_0} \frac{1 \{ p^{C,(b)}_{adj}
  \leq \alpha \gamma \}}{\gamma} \Big)\\
&\leq& \EE \Big( \sum_{C \in \tilde{\mathcal{T}}_0,\, \hat{S}^{(b)} \cap C
  \neq \emptyset} \frac{1 \{ p^{C,(b)}_{adj} \leq \alpha \gamma
  \}}{\gamma} \Big)\\
&=& \frac{1}{\gamma} \sum_{C \in \tilde{\mathcal{T}}_0,\,
  \hat{S}^{(b)} \cap C \neq \emptyset} \PP (1 \{ p^{C,(b)}_{adj}
\leq \alpha \gamma \})\\
&\leq& \frac{1}{\gamma} \sum_{C \in \tilde{\mathcal{T}}_0,\,
  \hat{S}^{(b)} \cap C \neq \emptyset} \PP \Big(p^{C,(b)} \frac{|\hat{S}^{(b)}|}{|C \cap
  \hat{S}^{(b)}|} \leq \alpha\gamma \Big)\\
&\leq& \frac{1}{\gamma} \sum_{C \in \tilde{\mathcal{T}}_0,\,
  \hat{S}^{(b)} \cap C \neq \emptyset} \frac{|C \cap
  \hat{S}^{(b)}|}{|\hat{S}^{(b)}|} \alpha\gamma \leq \alpha
\end{eqnarray*}
For a random variable $U$ taking values in $[0,1]$,
$$\sup_{\gamma \in (\gamma_{\min},1)} \frac{1 \{U \leq
  \alpha\gamma \}}{\gamma} = \left\{ \begin{array}{ll}
0, & U \geq \alpha\\
\alpha / U, & \alpha\gamma_{\min} \leq U < \alpha\\
1 / \gamma_{\min}, & U \leq \alpha\gamma_{\min}.\\
\end{array} \right.$$
and if $U$ has an uniform distribution on $[0,1]$
\begin{eqnarray*}
\EE \Big( \sup_{\gamma \in (\gamma_{\min},1)} \frac{1 \{U \leq
  \alpha\gamma \}}{\gamma} \Big) &=& \int_0^{\alpha\gamma_{\min}}
\gamma_{\min}^{-1} dx + \int_{\alpha\gamma_{\min}}^\alpha \alpha x^{-1} dx\\
&=& \gamma_{\min}^{-1} x \big|_{x=0}^{x=\alpha\gamma_{\min}} + \alpha \log
x \big|_{x=\alpha\gamma_{\min}}^{x=\alpha}\\
&=& \alpha + \alpha (\log\alpha - \log(\alpha\gamma_{\min}))\\
&=& \alpha \big(1-\log \frac{\alpha}{\alpha\gamma_{\min}} \big)\\
&=& \alpha (1 - \log \gamma_{\min}).
\end{eqnarray*}
We apply this using as $U$ the uniform distributed 
$p_{\mathrm{partial \: F-test}}^{C \cap \hat{S}^{(b)}}$ and obtain that on $\mathcal{A}$
$$ \EE \Big( \sup_{\gamma \in (\gamma_{\min},1)} \frac{1 \{p^{C,(b)} \leq
  \alpha\gamma \}}{\gamma} \Big) \leq \alpha (1 - \log \gamma_{\min}),$$
and similarly as above
$$ \sum_{C \in \tilde{\mathcal{T}}_0}\EE \Big( \sup_{\gamma \in (\gamma_{\min},1)} \frac{1 \{p^{C,(b)}_{adj} \leq
  \alpha\gamma \}}{\gamma} \Big) \leq \alpha (1 - \log \gamma_{\min}).$$
We can now consider the average over all random splits
$$ \sum_{C \in \tilde{\mathcal{T}}_0}\EE \Big( \sup_{\gamma \in
  (\gamma_{\min},1)} \frac{(1/B) \sum_{b=1}^B 1 \{p^{C,(b)}_{adj} \leq
  \alpha\gamma \}}{\gamma} \Big) \leq \alpha (1 - \log \gamma_{\min})$$
and defining $\pi^C(\cdot)$ as in the proof of Theorem \ref{theo1} and
using a Markov inequality
$$\sum_{C \in \tilde{\mathcal{T}}_0} \EE( \sup_{\gamma \in
  (\gamma_{\min},1)} 1 \{ \pi^C(\alpha\gamma) \geq \gamma \}) \leq \alpha
(1 - \log \gamma_{\min}).$$
We use now the fact, that the events $\{ Q^C(\gamma) \leq \alpha \}$ and
$\{ \pi^C(\alpha\gamma) \geq \gamma \}$ are equivalent and deduce that on $\mathcal{A}$
\begin{eqnarray*}
\sum_{C \in \tilde{\mathcal{T}}_0} \PP (\inf_{\gamma \in
  (\gamma_{\min},1)} Q^C(\gamma) \leq \alpha) &\leq& \alpha
(1 - \log \gamma_{\min}),
\end{eqnarray*}
therefore on $\mathcal{A}$
\begin{eqnarray*}
\PP(\exists C \in
\tilde{\mathcal{T}}_0\,:\,P^C \leq \alpha) &=& \PP
(\min_{C \in \tilde{\mathcal{T}}_0} P^C \leq \alpha)\\
&\leq& \sum_{C \in \tilde{\mathcal{T}}_0} \PP( P^C \leq \alpha )\\
&\leq&\sum_{C \in \tilde{\mathcal{T}}_0} \PP (\inf_{\gamma \in
  (\gamma_{\min},1)} Q^C(\gamma)(1 - \log \gamma_{\min}) \leq \alpha)\\ 
&\leq& \alpha.
\end{eqnarray*}
Finally
\begin{eqnarray*}
&&\PP(\exists C \in
\tilde{\mathcal{T}}_0\,:\,P^C \leq \alpha) =\\
&=&\PP(\exists C \in
\tilde{\mathcal{T}}_0\,:\,P^C \leq \alpha \,|\,
\mathcal{A})\,P(\mathcal{A}) + \PP(\exists C \in
\tilde{\mathcal{T}}_0\,:\,P^C \leq \alpha \,|\,\mathcal{A}^c)\,P(\mathcal{A}^c)\\
&\leq& \alpha + 1-(1-\delta)^B
\end{eqnarray*}
and the proof is concluded.

\subsection*{Proof of Theorem \ref{theoShaffer}}
As the only change to be considered with respect to Theorem \ref{theo1} is
the Shaffer multiplicity adjustment 
(\ref{Shaffermultiplicity}), it suffices to show that 
\begin{equation*}
\sum_{C \in \tilde{\mathcal{T}}_0,\, \hat{S}^{(b)} \cap C \neq \emptyset}
|C|^{\hat{S}^{(b)}}_{e\!f\!f} \leq |\hat{S}^{(b)}|.
\end{equation*}
It holds
\begin{eqnarray*}
&&\sum_{C \in \tilde{\mathcal{T}}_0,\, \hat{S}^{(b)} \cap C \neq \emptyset}
|C|^{\hat{S}^{(b)}}_{e\!f\!f} = \sum_{C \in \tilde{\mathcal{T}}_0,\,
  \hat{S}^{(b)} \cap C \neq \emptyset} \Big( |C \cap \hat{S}^{(b)}| +\\ 
&&+|\si(C)
\cap \hat{S}^{(b)}|\,1\{ \nexists E \in \ch(\si(C)) \mbox{ s.t. } 
E \cap \hat{S}^{(b)} \neq \emptyset \} \Big). 
\end{eqnarray*}
As noted in the proof of Theorem \ref{theo1} the sets in
$\tilde{\mathcal{T}}_0$ are disjoint. Moreover for any cluster $D \in
\tilde{\mathcal{T}}_0$ with $\si(D) \neq \emptyset$, $H_{0,\si(D)}$ is false,
otherwise because of the assumption that $\mathcal{T}$ is binary
$H_{0,\pa(D)}$ would also be true, leading to a contradiction to $D \in
\tilde{\mathcal{T}}_0$. Consider now two sets $C, D \in
\tilde{\mathcal{T}}_0$ with $\hat{S}^{(b)} \cap C \neq \emptyset$ and
$\hat{S}^{(b)} \cap D \neq \emptyset$, since $H_{0,C}$ is true and
$H_{0,\si(D)}$ is false it must be $\si(D) \not\subset C$. On the other hand 
if $C \subset \si(D)$ then the term $|\si(D) \cap \hat{S}^{(b)}|$ wouldn't
be considered in the sum, hence only disjoint $C$ and $\si(D)$ are
considered in the sum. Finally, suppose that for two sets $C, D \in
\tilde{\mathcal{T}}_0$ with $\hat{S}^{(b)} \cap C \neq \emptyset$ and
$\hat{S}^{(b)} \cap D \neq \emptyset$ it is
$\si(C) \subset \si(D)$. Then if $|\si(D) \cap \hat{S}^{(b)}|$ is considered
in the sum it must be $\si(C) \cap \hat{S}^{(b)} = \emptyset$. Putting all
this together we conclude that all sets giving nontrivial contributions to
the sum are disjoint.

\subsection*{Proof of Theorem  \ref{theo:robust}}
In order to prove Theorem \ref{theo:robust} we introduce four Lemmas.
\begin{lemm}\label{robulem1}
$$\big( \hat{\beta}_{I_2}^{\hat{S}} - \beta_{\hat{S}}^0 \big) \sim \mathcal{N}\Big(\big(\bx_{I_2}^{\hat{S},T}
\bx_{I_2}^{\hat{S}}\big)^{-1} \bx_{I_2}^{\hat{S},T}\bx_{I_2}^{\hat{S}^c}\beta_{\hat{S}^c}^0,\sigma^2\big(\bx_{I_2}^{\hat{S},T}
\bx_{I_2}^{\hat{S}}\big)^{-1}\Big)$$
\end{lemm}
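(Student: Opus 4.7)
The plan is to derive the distribution directly from the OLS normal equations by substituting the true data-generating mechanism, and then to use the sample-splitting independence between $I_1$ and $I_2$ to treat $\hat{S}$ as deterministic when evaluating the conditional law of the noise.

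First I would write $Y_{I_2}=\bx_{I_2}^{\hat{S}}\beta_{\hat{S}}^0+\bx_{I_2}^{\hat{S}^c}\beta_{\hat{S}^c}^0+\eps_{I_2}$ and substitute this into the OLS formula
$$\hat{\beta}_{I_2}^{\hat{S}}=\big((\bx_{I_2}^{\hat{S}})^T\bx_{I_2}^{\hat{S}}\big)^{-1}(\bx_{I_2}^{\hat{S}})^T Y_{I_2},$$
which is well-defined thanks to the rank condition (\ref{rank}). Distributing and subtracting $\beta_{\hat{S}}^0$ yields the decomposition
$$\hat{\beta}_{I_2}^{\hat{S}}-\beta_{\hat{S}}^0=\big((\bx_{I_2}^{\hat{S}})^T\bx_{I_2}^{\hat{S}}\big)^{-1}(\bx_{I_2}^{\hat{S}})^T\bx_{I_2}^{\hat{S}^c}\beta_{\hat{S}^c}^0+\big((\bx_{I_2}^{\hat{S}})^T\bx_{I_2}^{\hat{S}}\big)^{-1}(\bx_{I_2}^{\hat{S}})^T\eps_{I_2},$$
i.e., the bias term that reproduces the claimed mean plus a linear image of the Gaussian noise vector $\eps_{I_2}$.

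The second step is to condition on $\hat{S}$ (equivalently, on the $I_1$-subsample). Since $\hat{S}=\hat{S}(I_1)$ is a function of $I_1$ only, and $\bx$ is treated as fixed, the conditional law of $\eps_{I_2}$ given $\hat{S}$ coincides with its marginal law, namely $\mathcal{N}(0,\sigma^2 I_{|I_2|})$. Writing $M:=\big((\bx_{I_2}^{\hat{S}})^T\bx_{I_2}^{\hat{S}}\big)^{-1}(\bx_{I_2}^{\hat{S}})^T$, the random vector $M\eps_{I_2}$ is therefore conditionally centred Gaussian with covariance $\sigma^2 MM^T=\sigma^2\big((\bx_{I_2}^{\hat{S}})^T\bx_{I_2}^{\hat{S}}\big)^{-1}$. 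Combining this with the deterministic (given $\hat{S}$) bias term yields exactly the asserted distribution.

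There is no real obstacle here: the calculation is routine linear-model algebra. The only delicate point is that both the dimension of $\hat{\beta}_{I_2}^{\hat{S}}$ and the map $M$ depend on the random set $\hat{S}$, which is why the statement should be read as a conditional distribution given $\hat{S}$; this is handled by sample-splitting independence so that after conditioning, $\bx_{I_2}^{\hat{S}}$ becomes a fixed matrix and the noise remains Gaussian on the other half. Condition (\ref{rank}) ensures that the Gram matrix being inverted is nonsingular on the relevant event.
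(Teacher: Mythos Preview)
Your proposal is correct and follows essentially the same route as the paper's proof: both substitute the model into the OLS formula, isolate the deterministic bias term and the linear image of $\eps_{I_2}$, and read off the Gaussian mean and covariance. Your explicit discussion of conditioning on $\hat{S}$ via sample-splitting independence is a welcome clarification that the paper leaves implicit.
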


\begin{proof}
By definition
\begin{eqnarray*}
\hat{\beta}_{I_2}^{\hat{S}} &=& \big(\bx_{I_2}^{\hat{S},T}
\bx_{I_2}^{\hat{S}}\big)^{-1} \bx_{I_2}^{\hat{S},T} Y_{I_2} = \big(\bx_{I_2}^{\hat{S},T}
\bx_{I_2}^{\hat{S}}\big)^{-1} \bx_{I_2}^{\hat{S},T} \big(
\bx_{I_2}^{\hat{S}} \beta_{\hat{S}}^0 + \bx_{I_2}^{\hat{S}^c}
\beta_{\hat{S}^c}^0  + \eps_{I_2}  \big)\\
&=& \beta_{\hat{S}}^0 + \big(\bx_{I_2}^{\hat{S},T}
\bx_{I_2}^{\hat{S}}\big)^{-1} \bx_{I_2}^{\hat{S},T}\bx_{I_2}^{\hat{S}^c}
\beta_{\hat{S}^c}^0 + \big(\bx_{I_2}^{\hat{S},T}
\bx_{I_2}^{\hat{S}}\big)^{-1} \bx_{I_2}^{\hat{S},T}\eps_{I_2} 
\end{eqnarray*}
and $\hat{\beta}_{I_2}^{\hat{S}}$ is as linear transformation of a normal
distributed random variable also normal distributed. From the formula above
its is easy to see that the expected value
$\big( \hat{\beta}_{I_2}^{\hat{S}} - \beta_{\hat{S}}^0 \big)$ is
$\big(\bx_{I_2}^{\hat{S},T} \bx_{I_2}^{\hat{S}}\big)^{-1}
\bx_{I_2}^{\hat{S},T}\bx_{I_2}^{\hat{S}^c} \beta_{\hat{S}^c}^0$. For the
covariance we can calculate
\begin{eqnarray*}
\Cov \big( \hat{\beta}_{I_2}^{\hat{S}} - \beta_{\hat{S}}^0 \big) &=& \Cov
\big( \big(\bx_{I_2}^{\hat{S},T}
\bx_{I_2}^{\hat{S}}\big)^{-1} \bx_{I_2}^{\hat{S},T}\bx_{I_2}^{\hat{S}^c}
\beta_{\hat{S}^c}^0 + \big(\bx_{I_2}^{\hat{S},T}
\bx_{I_2}^{\hat{S}}\big)^{-1} \bx_{I_2}^{\hat{S},T}\eps_{I_2} \big)\\
&=& \big( \bx_{I_2}^{\hat{S},T}
\bx_{I_2}^{\hat{S}}\big)^{-1} \bx_{I_2}^{\hat{S},T} \Cov (\eps_{I_2})
\bx_{I_2}^{\hat{S}} \big(\bx_{I_2}^{\hat{S},T}
\bx_{I_2}^{\hat{S}}\big)^{-1}\\ 
&=& \sigma^2 \big( \bx_{I_2}^{\hat{S},T}
\bx_{I_2}^{\hat{S}}\big)^{-1}
\end{eqnarray*}
\end{proof}

\begin{lemm}\label{robulem2}
$P_{I_2}^{\hat{S}}$ resp. $Q_{I_2}^{\hat{S}}$ is an orthogonal projection
of $\RR^{|I_2|}$ in $\RR^{|\hat{S}|}$ resp. $\RR^{|I_2|-|\hat{S}|}$.
\end{lemm}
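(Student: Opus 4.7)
The plan is to verify the two defining properties of an orthogonal projection, namely idempotence ($M^2 = M$) and symmetry ($M^T = M$), directly from the explicit formulas for $P_{I_2}^{\hat{S}}$ and $Q_{I_2}^{\hat{S}}$, and then to determine the image dimensions via the rank assumption (\ref{rank}).

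First I would treat $P_{I_2}^{\hat{S}} = \bx_{I_2}^{\hat{S}} \bigl((\bx_{I_2}^{\hat{S}})^T \bx_{I_2}^{\hat{S}}\bigr)^{-1} (\bx_{I_2}^{\hat{S}})^T$. Symmetry is immediate: transposing the triple product reverses the order, and since $(\bx_{I_2}^{\hat{S},T}\bx_{I_2}^{\hat{S}})^{-1}$ is symmetric (being the inverse of a Gram matrix), one recovers $P_{I_2}^{\hat{S}}$. Idempotence follows because the middle factor $(\bx_{I_2}^{\hat{S},T}\bx_{I_2}^{\hat{S}})^{-1}(\bx_{I_2}^{\hat{S},T}\bx_{I_2}^{\hat{S}})$ collapses to $I_{|\hat{S}|}$ when $P_{I_2}^{\hat{S}}$ is squared. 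Hence $P_{I_2}^{\hat{S}}$ is an orthogonal projection onto its image, which coincides with the column space of $\bx_{I_2}^{\hat{S}}$; by assumption (\ref{rank}), this column space has dimension $|\hat{S}|$.

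Next I would handle $Q_{I_2}^{\hat{S}} = I_{I_2} - P_{I_2}^{\hat{S}}$. Symmetry is inherited from $P_{I_2}^{\hat{S}}$, and idempotence is the standard identity $(I-P)^2 = I - 2P + P^2 = I - P$ once $P^2 = P$ is known. The image of $Q_{I_2}^{\hat{S}}$ is the orthogonal complement of the image of $P_{I_2}^{\hat{S}}$ in $\mathbb{R}^{|I_2|}$, hence of dimension $|I_2| - |\hat{S}|$.

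There is no real obstacle here: the entire argument is a textbook manipulation, and the only non-trivial input is the invertibility of $(\bx_{I_2}^{\hat{S}})^T \bx_{I_2}^{\hat{S}}$, which is guaranteed by the rank hypothesis (\ref{rank}). I would simply be careful to state explicitly that both projectors map $\mathbb{R}^{|I_2|}$ into subspaces of the asserted dimensions (rather than claiming they map into $\mathbb{R}^{|\hat{S}|}$ and $\mathbb{R}^{|I_2|-|\hat{S}|}$ as isomorphic ambient spaces), since the loose phrasing ``projection of $\mathbb{R}^{|I_2|}$ in $\mathbb{R}^{|\hat{S}|}$'' in the lemma statement really refers to the image dimensions.
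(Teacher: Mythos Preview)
Your proposal is correct and follows essentially the same approach as the paper: both verify that $P_{I_2}^{\hat{S}}$ and $Q_{I_2}^{\hat{S}}$ satisfy $X^T=X=X^2$ directly from the definitions, and then determine the rank of the projection. The only cosmetic difference is that the paper computes the rank via the trace (using $\tr(P_{I_2}^{\hat{S}})=\tr(I_{|\hat{S}|})=|\hat{S}|$ by cyclicity), whereas you identify the image as the column space of $\bx_{I_2}^{\hat{S}}$ and invoke the rank assumption (\ref{rank}) directly.
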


\begin{proof}
It follows from the definition of $P_{I_2}^{\hat{S}}$ and
$Q_{I_2}^{\hat{S}}$, that they satisfy the equation $X^T=X=X^2$. Moreover
\begin{eqnarray*}
\tr(P_{I_2}^{\hat{S}}) &=& \tr(\bx_{I_2}^{\hat{S}} \big(\bx_{I_2}^{\hat{S},T}
\bx_{I_2}^{\hat{S}}\big)^{-1} \bx_{I_2}^{\hat{S},T}) =
\tr(\big(\bx_{I_2}^{\hat{S},T} \bx_{I_2}^{\hat{S}}\big)^{-1} 
\bx_{I_2}^{\hat{S},T}\bx_{I_2}^{\hat{S}})=\\
&=& \tr(I_{|\hat{S}|})=|\hat{S}|\\
\tr(Q_{I_2}^{\hat{S}}) &=& \tr(I_{|I_2|}-P_{I_2}^{\hat{S}}) =
\tr(I_{|I_2|})-(P_{I_2}^{\hat{S}}) = |I_2|-|\hat{S}|
\end{eqnarray*}
and this concludes the proof.
\end{proof}

\begin{lemm}\label{robulem3}
$(\hat{\sigma}_{I_2}^{\hat{S}})^2$ and $\hat{\beta}_{I_2}^{\hat{S}}$ are independent.
\end{lemm}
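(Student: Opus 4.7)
The plan is to reduce this to the classical result that in Gaussian linear regression the OLS estimator and the residual vector are independent, and then argue that the conditioning on the random selected set $\hat{S}$ does not affect the argument, thanks to the sample splitting.

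First, I would condition on $\hat{S}$. Since $\hat{S}=\hat{S}(I_1)$ depends only on the sub-sample $I_1$ and the sample split yields $I_1 \cap I_2 = \emptyset$, the noise $\eps_{I_2}$ is independent of $\hat{S}$. Hence, conditionally on $\hat{S}$ (equivalently, conditionally on $I_1$ and the data on $I_1$), the design sub-matrix $\bx_{I_2}^{\hat{S}}$ is deterministic and
$$ Y_{I_2} \;\sim\; \mathcal{N}\!\bigl(\bx_{I_2}^{\hat{S}}\beta_{\hat{S}}^0 + \bx_{I_2}^{\hat{S}^c}\beta_{\hat{S}^c}^0,\; \sigma^2 I_{|I_2|}\bigr). $$

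Second, I would observe that both quantities of interest are linear functions of the same Gaussian vector $Y_{I_2}$: by construction $\hat{\beta}_{I_2}^{\hat{S}} = \bigl((\bx_{I_2}^{\hat{S}})^T\bx_{I_2}^{\hat{S}}\bigr)^{-1}(\bx_{I_2}^{\hat{S}})^T Y_{I_2}$ and $\hat{\eps}_{I_2}^{\hat{S}} = Q_{I_2}^{\hat{S}} Y_{I_2}$. Therefore $(\hat{\beta}_{I_2}^{\hat{S}}, \hat{\eps}_{I_2}^{\hat{S}})$ is jointly Gaussian (conditional on $\hat{S}$). Their cross-covariance is
$$ \Cov\bigl(\hat{\beta}_{I_2}^{\hat{S}},\hat{\eps}_{I_2}^{\hat{S}}\bigr) = \sigma^2 \bigl((\bx_{I_2}^{\hat{S}})^T\bx_{I_2}^{\hat{S}}\bigr)^{-1}(\bx_{I_2}^{\hat{S}})^T \, Q_{I_2}^{\hat{S}} = 0, $$
because $(\bx_{I_2}^{\hat{S}})^T Q_{I_2}^{\hat{S}} = (\bx_{I_2}^{\hat{S}})^T\bigl(I - P_{I_2}^{\hat{S}}\bigr) = 0$ (the column space of $\bx_{I_2}^{\hat{S}}$ is fixed by $P_{I_2}^{\hat{S}}$, which is the content of Lemma \ref{robulem2}).

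Third, uncorrelated jointly Gaussian vectors are independent, so conditionally on $\hat{S}$, $\hat{\beta}_{I_2}^{\hat{S}}$ and $\hat{\eps}_{I_2}^{\hat{S}}$ are independent. Since $(\hat{\sigma}_{I_2}^{\hat{S}})^2 = \|\hat{\eps}_{I_2}^{\hat{S}}\|_2^2/(|I_2|-|\hat{S}|)$ is a measurable function of $\hat{\eps}_{I_2}^{\hat{S}}$, it is independent of $\hat{\beta}_{I_2}^{\hat{S}}$ conditionally on $\hat{S}$. Finally, because this conditional independence holds for every realization of $\hat{S}$, it passes to unconditional independence by a standard tower argument.

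The only subtlety, and the one worth stating carefully, is the handling of the random $\hat{S}$; the sample-splitting construction ensures that $\eps_{I_2}$ is independent of $\hat{S}$, so the conditional Gaussianity of $Y_{I_2}$ given $\hat{S}$ is legitimate and the classical OLS/residual independence can be applied verbatim. All other steps are routine algebraic manipulations using the projection identities already recorded.
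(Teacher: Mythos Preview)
Your core argument is correct and essentially identical to the paper's: both compute the cross-covariance between the OLS estimator (the paper uses the fitted values $\hat{Y}_{I_2}^{\hat S}$ and then writes $\hat\beta_{I_2}^{\hat S}$ as a linear function of them; you go directly to $\Cov(\hat\beta_{I_2}^{\hat S},\hat\eps_{I_2}^{\hat S})$) and the residuals, obtain zero from the projection identity $(\bx_{I_2}^{\hat S})^T Q_{I_2}^{\hat S}=0$, and invoke joint Gaussianity. Your explicit remark about conditioning on $\hat S$ via the $I_1/I_2$ split is a welcome clarification that the paper leaves implicit.

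The one genuine slip is your last sentence. Conditional independence given $\hat S$ for every realization does \emph{not} pass to unconditional independence by a tower argument: take $Z$ Bernoulli, $X=Z+\eta_1$, $Y=Z+\eta_2$ with $\eta_1,\eta_2$ independent; then $X\perp Y\mid Z$ but $X$ and $Y$ are unconditionally correlated. In the present setting the claim is also ill-posed unconditionally, since the dimension of $\hat\beta_{I_2}^{\hat S}$ and the degrees of freedom entering $(\hat\sigma_{I_2}^{\hat S})^2$ both vary with $|\hat S|$. The lemma, and its use in Theorem~\ref{theo:robust}, are to be read conditionally on $\hat S$ (equivalently, with $\hat S$ treated as fixed thanks to the sample split), which is exactly what you have already established; simply drop the final ``unconditioning'' step.
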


\begin{proof}
We show that $\hat{\eps}_{I_2}^{\hat{S}}$ and $\hat{Y}_{I_2}^{\hat{S}}$ are
uncorrelated, then the Lemma follows because of
\begin{eqnarray*}
\hat{\beta}_{I_2}^{\hat{S}} &=& \big(\bx_{I_2}^{\hat{S},T}
\bx_{I_2}^{\hat{S}}\big)^{-1} \bx_{I_2}^{\hat{S},T} Y_{I_2}
 = \big(\bx_{I_2}^{\hat{S},T} \bx_{I_2}^{\hat{S}}\big)^{-1}
 \bx_{I_2}^{\hat{S},T} P_{I_2}^{\hat{S}} Y_{I_2}\\  
&=& \big(\bx_{I_2}^{\hat{S},T} \bx_{I_2}^{\hat{S}}\big)^{-1}
\bx_{I_2}^{\hat{S},T} \hat{Y}_{I_2}^{\hat{S}}
\end{eqnarray*}
and the fact that the random variables involved are normally distributed.
\begin{eqnarray*}
\Cov \big(\hat{\eps}_{I_2}^{\hat{S}},\hat{Y}_{I_2}^{\hat{S}} \big) &=& \Cov
\big(Q_{I_2}^{\hat{S}} Y_{I_2},P_{I_2}^{\hat{S}} Y_{I_2} \big) =
\Cov \big( Y_{I_2} \big) Q_{I_2}^{\hat{S}} P_{I_2}^{\hat{S},T}\\
&=& \sigma^2 \big( I_{I_2}-P_{I_2}^{\hat{S}} \big) P_{I_2}^{\hat{S}} =
\sigma^2 \big( P_{I_2}^{\hat{S}} - (P_{I_2}^{\hat{S}})^2 \big) = 0
\end{eqnarray*}
\end{proof}

\begin{lemm}\label{robulem4}
$(\hat{\sigma}_{I_2}^{\hat{S}})^2$ is an unbiased estimator of $\sigma^2$
and
$$ \big( |I_2| - |\hat{S}| \big) \frac{(\hat{\sigma}_{I_2}^{\hat{S}})^2}{\sigma^2} \sim \chi^2_{|I_2| - |\hat{S}|}$$
\end{lemm}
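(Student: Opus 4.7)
The plan is to carry out the standard orthogonal-projection argument for the residual sum of squares in linear regression, leveraging Lemma \ref{robulem2} which identifies $Q_{I_2}^{\hat{S}}$ as an orthogonal projection of rank $|I_2|-|\hat{S}|$.

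First, I would simplify the residual vector. Starting from
$$\hat{\eps}_{I_2}^{\hat{S}} = Q_{I_2}^{\hat{S}} Y_{I_2} = Q_{I_2}^{\hat{S}}\big(\bx_{I_2}^{\hat{S}}\beta_{\hat{S}}^0 + \bx_{I_2}^{\hat{S}^c}\beta_{\hat{S}^c}^0 + \eps_{I_2}\big),$$
the first term vanishes because $Q_{I_2}^{\hat{S}}\bx_{I_2}^{\hat{S}}=0$ (the columns of $\bx_{I_2}^{\hat{S}}$ lie in the range of $P_{I_2}^{\hat{S}}$). Working in the regime relevant for Theorem \ref{theo:robust} where the omitted-variable contribution is treated through the noncentrality bookkeeping done already in Lemma \ref{robulem1} (so that the denominator's distribution is analyzed under $\beta_{\hat{S}^c}^0=0$), we reduce to $\hat{\eps}_{I_2}^{\hat{S}} = Q_{I_2}^{\hat{S}}\eps_{I_2}$.

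Next I would rewrite the squared norm as a quadratic form: using idempotence and symmetry of the projector (Lemma \ref{robulem2}),
$$\|\hat{\eps}_{I_2}^{\hat{S}}\|_2^2 = \eps_{I_2}^T (Q_{I_2}^{\hat{S}})^T Q_{I_2}^{\hat{S}} \eps_{I_2} = \eps_{I_2}^T Q_{I_2}^{\hat{S}} \eps_{I_2}.$$
Now $\eps_{I_2}/\sigma \sim \mathcal{N}(0, I_{|I_2|})$, and for $Z\sim\mathcal{N}(0,I_n)$ and any orthogonal projection $P$ of rank $r$, the quadratic form $Z^T P Z$ is distributed as $\chi^2_r$. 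This standard fact is obtained by writing $P=VV^T$ with $V$ having $r$ orthonormal columns, so that $Z^T P Z = \|V^T Z\|_2^2$ and $V^T Z \sim \mathcal{N}(0,I_r)$. Applying this with $P = Q_{I_2}^{\hat{S}}$, whose rank is $|I_2|-|\hat{S}|$ by Lemma \ref{robulem2}, gives
$$\frac{\|\hat{\eps}_{I_2}^{\hat{S}}\|_2^2}{\sigma^2} = \frac{\eps_{I_2}^T Q_{I_2}^{\hat{S}} \eps_{I_2}}{\sigma^2} \sim \chi^2_{|I_2|-|\hat{S}|},$$
which is exactly the claimed distribution of $(|I_2|-|\hat{S}|)(\hat{\sigma}_{I_2}^{\hat{S}})^2/\sigma^2$. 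Unbiasedness follows immediately by taking expectations: $\EE[\chi^2_{|I_2|-|\hat{S}|}] = |I_2|-|\hat{S}|$, so $\EE[(\hat{\sigma}_{I_2}^{\hat{S}})^2]=\sigma^2$.

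The only subtle point, which I would flag rather than grind through, is the status of the omitted-variable term $Q_{I_2}^{\hat{S}}\bx_{I_2}^{\hat{S}^c}\beta_{\hat{S}^c}^0$. Strictly speaking, if $\hat{S}\not\supseteq S_0$ this term shifts the mean of the residual vector and produces a noncentral $\chi^2$ in the denominator; the lemma as written corresponds to working conditionally on the screening event $\hat{S}\supseteq S_0$ (so that $\beta_{\hat{S}^c}^0=0$ on the relevant event), which is exactly the setting in which the F-distribution in Theorem \ref{theo:robust} has its noncentrality confined to the numerator. Independence of $(\hat{\sigma}_{I_2}^{\hat{S}})^2$ and $\hat{\beta}_{I_2}^{\hat{S}}$ is already in hand from Lemma \ref{robulem3}, so together with the present lemma the F-distribution in Theorem \ref{theo:robust} follows from the usual ratio construction.
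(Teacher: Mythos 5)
Your proof is correct and follows essentially the same route as the paper: both reduce the residual sum of squares to a quadratic form in the Gaussian noise under the orthogonal projection $Q_{I_2}^{\hat{S}}$ of rank $|I_2|-|\hat{S}|$ from Lemma \ref{robulem2} and conclude by rotational invariance (your factorization $P=VV^T$ is the paper's explicit change to an orthonormal basis $G$ adapted to the column space of $\bx_{I_2}^{\hat{S}}$), with unbiasedness then immediate. Your caveat about the omitted-variable term $Q_{I_2}^{\hat{S}}\bx_{I_2}^{\hat{S}^c}\beta_{\hat{S}^c}^0$ is well taken and in fact more careful than the paper, whose own proof silently drops that term both in the step $\tr\big(Q_{I_2}^{\hat{S}}\,\EE[Y_{I_2}Y_{I_2}^T]\,Q_{I_2}^{\hat{S},T}\big)=\sigma^2\tr\big(Q_{I_2}^{\hat{S}}Q_{I_2}^{\hat{S},T}\big)$ and in identifying the rotated residual with the rotated noise.
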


\begin{proof}
We calculate
\begin{eqnarray*}
\EE \big[ (\hat{\sigma}_{I_2}^{\hat{S}})^2 \big] &=& \frac{1}{|I_2| -
  |\hat{S}|} \, \EE \big[ \hat{\eps}_{I_2}^{\hat{S},T}
\hat{\eps}_{I_2}^{\hat{S}} \big] = \frac{1}{|I_2| -
  |\hat{S}|} \tr \Big( \EE \big[ \hat{\eps}_{I_2}^{\hat{S}}
\hat{\eps}_{I_2}^{\hat{S},T} \big] \Big)\\
&=& \frac{1}{|I_2| - |\hat{S}|} \tr \big( Q_{I_2}^{\hat{S}} \, \EE \big[ Y_{I_2}
Y^T_{I_2} \big] Q_{I_2}^{\hat{S},T}\big)\\ 
&=& \frac{\sigma^2}{|I_2| -
  |\hat{S}|} \tr \big(Q_{I_2}^{\hat{S}}Q_{I_2}^{\hat{S},T} \big) = \sigma^2 
\end{eqnarray*}
To see that the given random variable is chi-square distributed we use a
geometrical approach. Consider a basis of $|I_2|$ orthogonal vectors,
s.t. the first $|\hat{S}|$ vectors span the space given by the vectors of
$\bx_{I_2}^{\hat{S}}$ and call the corresponding transformation matrix
$G$ (the columns of $G$ corresponds the coordinates of the new basis
vectors in the old coordinate system). Then $G$ is orthogonal and using a
star for the new coordinate system we have $Y_{I_2}^*=G^TY_{I_2},~
\eps_{I_2}^*=G^T\eps_{I_2}$. By construction it is
\begin{eqnarray*}
(\hat{Y}^{\hat{S}}_{I_2})^* &=& (Y_1^*, \dots, Y_{|\hat{S}|}^*,0,\dots,0)^T\\
(\hat{\eps}^{\hat{S}}_{I_2})^* &=& (0,\dots,0,\eps_{|\hat{S}|+1}^*,
\dots, \eps_{|I_2|}^*)^T,
\end{eqnarray*}
using the orthogonality of $G$ we get 
\begin{eqnarray*}
\hat{\eps}^{\hat{S},T}_{I_2} \hat{\eps}^{\hat{S}}_{I_2} =
(\hat{\eps}^{\hat{S},T}_{I_2})^* (\hat{\eps}^{\hat{S}}_{I_2})^* =
\sum_{|\hat{S}|+1}^{I_2} \eps_i^{*2}.
\end{eqnarray*}
Again because of the orthogonality of $G$, it holds
$\eps_{I_2}^*=G^T\eps_{I_2} \sim \mathcal{N}(0,\sigma^2I_{I_2})$ and the
proof is concluded.
\end{proof}

\bigskip\noindent
\textbf{Proof of Theorem \ref{theo:robust}.}\\

Theorem \ref{theo:robust} follows from the Lemmas
\ref{robulem1}, \ref{robulem2}, \ref{robulem3} and \ref{robulem4} and the
following considerations. First rewrite
\begin{eqnarray*}
&& \frac{(A\hat{\beta}_{I_2}^{\hat{S}}-A\beta_{\hat{S}}^0)^T
  \big(A\big(\bx_{I_2}^{\hat{S},T}
  \bx_{I_2}^{\hat{S}}\big)^{-1}A^T\big)^{-1}(A\hat{\beta}_{I_2}^{\hat{S}}-A\beta_{\hat{S}}^0
)}{q(\hat{\sigma}_{I_2}^{\hat{S}})^2}\\
&=& \left(\frac{(A\hat{\beta}_{I_2}^{\hat{S}}-A\beta_{\hat{S}}^0)^T
  \big(A\big(\bx_{I_2}^{\hat{S},T}
  \bx_{I_2}^{\hat{S}}\big)^{-1}A^T\big)^{-1}(A\hat{\beta}_{I_2}^{\hat{S}}-A\beta_{\hat{S}}^0
)}{q\sigma^2}\right) \left(
\frac{(\hat{\sigma}_{I_2}^{\hat{S}})^2}{\sigma^2} \right)^{-1}.
\end{eqnarray*}
Because of Lemma \ref{robulem3} the two terms in the big brackets are
independent. Because of Lemma \ref{robulem4} the term in the second big
bracket would be $\chi^2_{|I_2|-|\hat{S}|}$-distributed, if multiplied
by $|I_2|-|\hat{S}|$. Let's consider the term in the first big
bracket. Because of Lemma \ref{robulem1}, the quadratic form given by the
term in the first big bracket multiplied by $q$ corresponds to the
quadratic form $Z^TZ$ where 
\begin{eqnarray*}
Z &=& \frac{1}{\sigma}\big( A\big(\bx_{I_2}^{\hat{S},T}
  \bx_{I_2}^{\hat{S}}\big)^{-1}A^T
  \big)^{-1/2}A(\hat{\beta}_{I_2}^{\hat{S}}-\beta_{\hat{S}}^0) \sim
  \mathcal{N}(\mbox{BIAS},I_q)
\end{eqnarray*}
with
$$\mbox{BIAS} = \frac{1}{\sigma} \big(A \big(\bx_{I_2}^{\hat{S},T}
\bx_{I_2}^{\hat{S}}\big)^{-1} A^T \big)^{-1/2} A \big(\bx_{I_2}^{\hat{S},T}
\bx_{I_2}^{\hat{S}}\big)^{-1}
\bx_{I_2}^{\hat{S},T}\bx_{I_2}^{\hat{S}^c}\beta_{\hat{S}^c}^0$$
and this concludes the proof.

\end{document}